\renewenvironment{proof}[1][\proofname] {\par\pushQED{\qed}\normalfont\topsep6\p@\@plus6\p@\relax\trivlist\item[\hskip\labelsep\bfseries#1\@addpunct{.}]\ignorespaces}{\popQED\endtrivlist\@endpefalse}
\theoremstyle{plain}
\newtheorem{Theorem}{Theorem}[section]
\newtheorem{Lemma}[Theorem]{Lemma}
\newtheorem{Corollary}[Theorem]{Corollary}
\theoremstyle{definition}
\newtheorem{Definition}[Theorem]{Definition}
\newtheorem{Notation}[Theorem]{Notation}
\newtheorem{Remark}[Theorem]{Remark}
\newcounter{condition}
\renewcommand{\thecondition}{C\arabic{condition}}
\newenvironment{Condition}{ \vspace{7pt} \\ \refstepcounter{condition} \textbf{Condition}~\textbf{\thecondition.}\itshape} {\vspace{7pt}}
\newcommand{\dd}[1]{\operatorname{d#1}}
\newcommand{\R}{\mathds{R}}
\newcommand{\E}{\mathds{E}}
\newcommand{\N}{\mathds{N}}
\newcommand{\1}{\mathds{1}}
\newcommand{\Co}{\mathcal{C}}
\newcommand{\Prob}{\mathds{P}}
\newcommand{\norm}[1]{\left\lVert #1 \right\rVert}
\newcommand{\abs}[1]{\left| #1 \right|}
 \title{Well-Posedness and Stability for a Class of Stochastic Delay Differential Equations with Singular Drift}
\author{ Stefan Bachmann\\  {\small Email: bachmann@math.uni-leipzig.de} \\ \multicolumn{1}{p{.55\textwidth}}{\normalsize\centering\emph{Institut f\"ur Mathematik, Universit\"at Leipzig, Augustusplatz 10, 04109 Leipzig, Germany}}}
\begin{document}
\maketitle
\begin{abstract}
	\textbf{Abstract}\\
	In this paper we prove well-posedness and stability of a class of stochastic delay differential equations with singular drift. Moreover, we show local well-posedness under localized assumptions. \\ \\
	\emph{Keywords:} Stochastic functional differential equation, Strong solution, Singular drift, Zvonkin's transformation, Krylov's estimate\\ \\
	\emph{MSC2010:} 34K50  
\end{abstract}
\section{Introduction and Main Results} 
In this paper we prove well-posedness and stability results for stochastic delay differential equations of the form
\begin{align}
	\label{deq}
	\dd{X}(t)=V(t,X_t)\dd{t}+b(t,X(t))\dd{t}+\sigma(t,X(t))\dd{W}(t)
\end{align}
where $b:\R_{\geq0}\times\R^d\to\R^d$ is a Borel-function and $\sigma:\R_{\geq0}\times\R^d\to\R^{d\times d}$ is measurable, bounded and non-degenerate, $V:\R_{\geq0}\times C\left([-r,0],\R^d\right)\to\R^d$ is measurable and
$$X_t(s):=X(t+s), \ s\in[-r,0].$$
This generalizes previous results for the non-delay case
\begin{align}
	\label{nondelayeq}
	\dd{X}(t)=b(t,X(t))\dd{t}+\sigma(t,X(t))\dd{W}(t).
\end{align}

Krylov and R\"ockner showed in \cite{KrylovRoeckner} that equation \eqref{nondelayeq} has a unique strong solution, essentially assuming $\abs{b}\in L^q_p:=L^q\left(\R_{\geq0};L^p\left(\R^d\right)\right)$, $\sigma\equiv\operatorname{Id}$ with 
\begin{align}
\label{pq}
\frac{d}{p}+\frac{2}{q}<1
\end{align}
which elaborates previous results, in particular from Zvonkin \cite{Zvonkin}, Portenko \cite{Portenko} and Veretennikov \cite{Ver}. Gy\"ongy and Mart\'inez proved existence and uniqueness theorems for non-constant $\sigma$ Lipschitz in space and $\abs{b}\in L^{2d+2}\left(\R_{\geq 0}\times\R^d\right)$ in \cite{Gyoengy2001}. Different stochastic flow theorems were studied by Gubinelli, Priola, Flandoli and Fedrizzi (cf. \cite{FlandoliGubinelli}, \cite{Fedrizzi2}).
Zhang showed existence of a unique strong solution and flow theorems for $\abs{b}\in L^q_p$ and $\sigma$ with $\abs{\nabla_x\sigma}\in L^q_{loc}\left(\R_{\geq0};L^p\left(\R^{d}\right)\right)$ in \cite{Zhang2011}. Additionally, he considered equations with Sobolev drifts and driven by $\alpha$-stable processes (cf. \cite{Zhang2}).

Our general approach is to remove the drift $b$ by Zvonkin's transformation as in \cite{Zhang2011} and to combine it with different Girsanov techniques and a stochastic Gronwall lemma from von Renesse and Scheutzow in \cite{Renesse,Scheutzow}. 

Throughout this paper, the following notation will be used
\begin{Notation}
	Fix $r>0$ and define
	$$\mathcal{C}:=C\left([-r,0],\R^d\right)$$
	equipped with the supremum norm $\norm{\cdot}_\infty$. For a process $X$ defined on $[t-r,t]$ with $t\geq0$, we define
	$$X_t(s):=X(t+s), \ s\in[-r,0].$$
	For a matrix $A\in\R^{d\times d}$, we denote by $\norm{\cdot}_{HS}$ the Hilbert-Schmidt-norm:
	$$\norm{A}_{HS}:=\sqrt{\sum_{i,j=1}^{d}\abs{A^{i,j}}^2}$$
	and for $s,t\in[-\infty,+\infty]$, we write
	\begin{align*}
		s\wedge t&:=\min(s,t),\\
		s\vee t&:=\max(s,t).
	\end{align*}
	\end{Notation}
The following conditions on $b$ and $\sigma$ are the same as in \cite{Zhang2011}.
\begin{Condition}
	\label{driftc}
$$\abs{b}\in L^q\left(\R_{\geq0};L^p\left(\R^d\right)\right)$$
for $p,q>1$ satisfying \eqref{pq}.
\end{Condition}
\begin{Condition}
	\label{sigmac}
	The diffusion coefficient $\sigma$ is uniformly continuous in $x\in\R^d$ locally uniformly with respect to $t\in\R_{\geq0}$, and $\sigma\sigma^\top$ is bounded and uniformly elliptic, i.e. there exists a $\kappa>0$ such that
	$$\kappa^{-1}I_{d\times d}\leq\sigma(t,x)\sigma(t,x)^\top\leq \kappa I_{d\times d} \ \forall x\in\R^d,t\in\R_{\geq0}.$$
\end{Condition}
\begin{Condition}
	\label{dsigmac}
	For the same $p,q\in(1,\infty)$ as in condition \eqref{driftc}, one has for the distributional gradient of $\sigma$
	$$\abs{\nabla_x\sigma^{i,j}}\in L^q_{loc}\left(\R_{\geq0};L^p\left(\R^d\right)\right), \ i,j=1,\dots,d.$$
\end{Condition}
Now, we state our conditions on the functional drift $V$:
\begin{Condition}
	\label{delayc}
	 The function $V:\R_{\geq0}\times\Co\to\R^d$ is assumed to be sublinear in the sense that there exists a monotone increasing function $g:\R_{\geq0}\to\R_{\geq0}$ with
	$$\limsup_{r\to\infty}\frac{g(r)}{r}=0$$
	such that
	$$\abs{V(t,x)}\leq g(\norm{x}_\infty)$$
	for all $t\in\R_{\geq0}$, $x\in\Co$.
\end{Condition}
\begin{Condition}
	\label{lip}
	There exists some $K>0$ such that
	$$\abs{V(t,x)-V(t,y)}\leq K\norm{x-y}_\infty$$
	for all $t\in\R_{\geq0}$, $x,y\in\Co$.
\end{Condition}
\begin{Definition}
Define for $0\leq S\leq T<\infty$ and $p,q\in(1,\infty)$
$$
\begin{array}{ll}
L^q_p(S,T):=L^q\left([S,T];L^p\left(\R^d\right)\right), &L^q_p(T):=L^q_p(0,T),\\
\mathds{H}^q_{2,p}(S,T):=L^q\left([S,T];W^{2,p}\left(\R^d\right)\right), &\mathds{H}^q_{2,p}(T):=\mathds{H}^q_{2,p}(0,T),\\
H^q_{2,p}(S,T):=W^{1,q}\left([S,T];L^p\left(\R^d\right)\right)\cap\mathds{H}^q_{2,p}(S,T),  &H^q_{2,p}(T):=H^q_{2,p}(0,T),
\end{array}
$$
equipped with the norm
$$\norm{u}_{H^q_{2,p}(S,T)}:=\norm{\partial_tu}_{L^q_p(S,T)}+\norm{u}_{\mathds{H}^q_{2,p}(S,T)}, \ u\in H^q_{2,p}(S,T).$$
\end{Definition}
\begin{Definition}
	Throughout this paper, we fix a standard $d$-dimensional Brownian motion $W=(W(t))_{t\geq0}$ defined on some filtrated probability space $(\Omega,\mathcal{F},\Prob,(\mathcal{F}_t)_{t\geq0})$ satisfying the usual conditions. Let $X$ be a local $(\mathcal{F}_t)_{t\geq0}$-adapted semimartingale which solves equation \eqref{deq} on $[0,\tau]$, for some $(\mathcal{F}_t)_{t\geq0}$-stopping time $\tau$, with respect to some initial condition $X_0=\xi$ for a $\Co$-valued, $\mathcal{F}_0$-measurable random variable $\xi$. Then we write $X\in\mathcal{S}^\tau(\xi)$.
\end{Definition}
Our main result reads as follows.
\begin{Theorem}
	\label{thm}
	Assume all conditions \eqref{driftc}, \eqref{sigmac}, \eqref{dsigmac}, \eqref{delayc} and \eqref{lip}. Then (local) pathwise uniqueness holds and there exists a global strong solution, which has almost surely $\alpha$-H\"older continuous paths on every bounded interval for any $0<\alpha<1/2$. Additionally, for any $\gamma\geq1$, $T>0$, $R>0$ and two solutions $X\in\mathcal{S}^T(x)$, $\hat{X}\in\mathcal{S}^T(\hat{x})$ where $x,\hat{x}\in\Co$ with $\max\left(\norm{x}_\infty,\norm{\hat{x}}_\infty\right)\leq R$, one has
	$$\E\norm{X_{t}-\hat{X}_{t}}_\infty^\gamma\leq C\norm{x-\hat{x}}_\infty^\gamma, \ 0\leq t\leq T$$
	with a constant  $C=C(\gamma,T,R,d,q,p,\kappa,K,\norm{b}_{L^q_p(T)},\norm{\nabla\sigma}_{L^q_p(T)},g)$.
\end{Theorem}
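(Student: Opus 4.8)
The plan is to remove the singular drift $b$ by Zvonkin's transformation, reducing \eqref{deq} on an arbitrary bounded interval $[0,T]$ to a delay SDE with bounded, well-behaved coefficients, and then to run a (stochastic) Gronwall argument in which the lack of Lipschitz regularity of the transformed data — $\sigma$ is only uniformly continuous and the transformation itself only Sobolev-regular — is absorbed via Krylov's estimate, the sublinearity \eqref{delayc} and the stochastic Gronwall lemma. Fix $T>0$. By \eqref{driftc}--\eqref{dsigmac} and the $L^q_p$-solvability theory for non-divergence form parabolic equations, choose $\lambda>0$ large and let $u=u^\lambda\in H^q_{2,p}(T)$ solve
$$\partial_t u+\tfrac12\operatorname{tr}\!\big(\sigma\sigma^\top\nabla^2 u\big)+b\cdot\nabla u+b=\lambda u\quad\text{on }[0,T],\qquad u(T,\cdot)=0.$$
Under \eqref{pq} one has $H^q_{2,p}(T)\hookrightarrow C([0,T];C^1(\R^d))$ with, moreover, Hölder-in-$t$ regularity of exponent exceeding $1/2$; hence for $\lambda$ large $\norm{\nabla u}_{L^\infty([0,T]\times\R^d)}\le\tfrac12$ and $\Phi(t,x):=x+u(t,x)$ is, for every $t\in[0,T]$, a bi-Lipschitz homeomorphism of $\R^d$ with $\tfrac12\abs{x-y}\le\abs{\Phi(t,x)-\Phi(t,y)}\le\tfrac32\abs{x-y}$ uniformly in $t$, with $\Phi^{-1}(t,\cdot)$ being $2$-Lipschitz.

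Applying the generalised Itô formula of Krylov to $u$ — legitimate since $u\in H^q_{2,p}(T)$ and Krylov's estimate holds for any $X\in\mathcal S^\tau(\xi)$, the extra drift $V(\cdot,X_\cdot)$ being locally bounded by \eqref{delayc} — the PDE for $u$ cancels the $b$-term and $Y(t):=\Phi(t,X(t))$ satisfies, on $[0,\tau]$,
$$\dd{Y}(t)=\lambda u(t,X(t))\,\dd{t}+\nabla\Phi(t,X(t))V(t,X_t)\,\dd{t}+\nabla\Phi(t,X(t))\sigma(t,X(t))\,\dd{W}(t),$$
where $X(t)=\Phi^{-1}(t,Y(t))$ and, pathwise, $X_t(s)=\Phi^{-1}((t+s)\vee0,Y(t+s))$; conversely $\Phi^{-1}(\cdot,Y)$ solves \eqref{deq} whenever $Y$ solves this transformed equation, so the two problems are equivalent. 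The diffusion coefficient $\tilde\sigma(t,y):=\nabla\Phi(t,\Phi^{-1}(t,y))\,\sigma(t,\Phi^{-1}(t,y))$ is bounded, uniformly continuous and non-degenerate with $\abs{\nabla_x\tilde\sigma^{i,j}}\in L^q_p(T)$ (using \eqref{dsigmac} and $\nabla^2 u\in L^q_p$), the drift $\lambda u(\cdot,\Phi^{-1}(\cdot,Y))$ is bounded and Lipschitz in $Y$, and the functional term is pathwise bounded by $C\,g(\norm{Y_\cdot}_\infty)$ and Lipschitz in the past up to the factor $\nabla\Phi$, which obeys only a Sobolev (Hajłasz) difference bound with coefficient $\mathcal M\abs{\nabla^2 u}$ ($\mathcal M$ the spatial Hardy--Littlewood maximal operator).

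For existence, let $(\bar Y,\bar W)$ be a weak solution of $\dd{\bar Y}(t)=\tilde\sigma(t,\bar Y(t))\,\dd{\bar W}(t)$ with $\bar Y_0=\Phi(0,\cdot)\circ\xi$; this exists since $\tilde\sigma$ is bounded, continuous and non-degenerate. On $[0,T]$ the process $\bar Y$ is a continuous martingale with bounded quadratic variation, so $\sup_{[-r,T]}\abs{\bar Y}$ has Gaussian tails; combined with the bound $C\,g(\norm{\bar Y_s}_\infty)$ on the transformed functional drift and the sublinearity $\limsup_{\rho\to\infty}g(\rho)/\rho=0$ in \eqref{delayc}, this yields Novikov's condition
$$\E\exp\big(\tfrac12\textstyle\int_0^T\abs{\tilde\sigma^{-1}(s,\bar Y(s))\,\bar\beta(s)}^2\,\dd{s}\big)<\infty,$$
where $\bar\beta$ denotes the transformed drift evaluated along $\bar Y$; it is precisely the sublinearity, rather than mere linear growth of $g$, that makes this exponent integrable. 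Girsanov's theorem then produces an equivalent measure under which $\bar Y$ solves the transformed equation, i.e.\ a weak solution of it; together with pathwise uniqueness (shown below) and the Yamada--Watanabe theorem for functional SDEs this gives a strong solution $Y$, whence $X:=\Phi^{-1}(\cdot,Y)$ is a strong solution of \eqref{deq} on $[0,T]$. As $T>0$ was arbitrary and, by pathwise uniqueness, solutions on nested intervals coincide, this yields a global strong solution. Its paths are a.s.\ $\alpha$-Hölder on bounded intervals for every $\alpha<1/2$: $Y=\bar Y$ solves an SDE with bounded coefficients, so $\E\abs{Y(t)-Y(s)}^m\le C\abs{t-s}^{m/2}$ and Kolmogorov's criterion applies; writing $X(t)=Y(t)-u(t,X(t))$ and using $\norm{\nabla u}_\infty\le\tfrac12$ together with the time-Hölder regularity of $u$ (exponent $>1/2$) transfers the property to $X$.

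It remains to prove (local) pathwise uniqueness and the stability estimate, which I treat together. Let $X\in\mathcal S^T(x)$, $\hat X\in\mathcal S^T(\hat x)$ with $\norm{x}_\infty,\norm{\hat x}_\infty\le R$, set $Y:=\Phi(\cdot,X)$, $\hat Y:=\Phi(\cdot,\hat X)$ and $\zeta:=Y-\hat Y$. Subtracting the two $Y$-equations and applying Itô to $\abs{\zeta(t)}^2$, one bounds the differences of the Lipschitz-in-$x$ parts (the $\lambda u$-term and, via \eqref{lip}, the genuine functional increment) directly, and the differences of $\sigma(t,\cdot)$ and $\nabla\Phi(t,\cdot)=\operatorname{Id}+\nabla u(t,\cdot)$ by the Hajłasz--Sobolev inequality $\abs{f(x)-f(\hat x)}\le C\big(\mathcal M\abs{\nabla f}(x)+\mathcal M\abs{\nabla f}(\hat x)\big)\abs{x-\hat x}$, using also $\abs{V(t,X_t)}\le g(\norm{X_t}_\infty)$, $\abs{\Phi^{-1}(t,Y)-\Phi^{-1}(t,\hat Y)}\le2\abs{\zeta}$ and $\norm{X_t-\hat X_t}_\infty\le2\norm{Y_t-\hat Y_t}_\infty$. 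This leads to an inequality of the form
$$\abs{\zeta(t)}^2\le C\norm{x-\hat x}_\infty^2\Big(1+\int_0^t H(s)\,\dd{s}\Big)+\int_0^t H(s)\,\sup_{u\le s}\abs{\zeta(u)}^2\,\dd{s}+M(t),\qquad t\le T,$$
with $M$ a continuous local martingale and $H\ge0$ adapted; the terms of $H$ are a constant, $\big(\mathcal M\abs{\nabla_x\sigma}(\cdot,X)+\mathcal M\abs{\nabla_x\sigma}(\cdot,\hat X)\big)^2$, $\big(\mathcal M\abs{\nabla^2 u}(\cdot,X)+\mathcal M\abs{\nabla^2 u}(\cdot,\hat X)\big)^2$ and the cross term $\big(\mathcal M\abs{\nabla^2 u}(\cdot,X)+\mathcal M\abs{\nabla^2 u}(\cdot,\hat X)\big)\,g\big(\norm{x}_\infty\vee\sup_{[0,T]}(\abs{X}\vee\abs{\hat X})\big)$, and $\int_0^T H(s)\,\dd{s}$ has finite moments of every order: for the squared terms by Krylov's estimate, since $\abs{\nabla_x\sigma}^2,\abs{\nabla^2 u}^2\in L^{q/2}_{p/2}$ and $\tfrac{2d}{p}+\tfrac4q<2$ by \eqref{pq}; for the cross term by Krylov's estimate together with the fact that, by \eqref{delayc}, $g\big(\sup_{[0,T]}(\abs{X}\vee\abs{\hat X})\big)$ has finite moments of every order (the suprema having all moments and $g$ being sublinear). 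Feeding this into the stochastic Gronwall lemma of von Renesse--Scheutzow — whose hypotheses hold precisely because $\int_0^T H(s)\,\dd{s}$ has finite moments of every order, no exponential integrability being needed — gives $\E\sup_{u\le t}\abs{\zeta(u)}^\gamma\le C\norm{x-\hat x}_\infty^\gamma$ for all $\gamma\ge1$, $t\le T$; taking $x=\hat x$ yields pathwise uniqueness, and $\norm{X_t-\hat X_t}_\infty\le2\norm{Y_t-\hat Y_t}_\infty$ with $\norm{Y_0-\hat Y_0}_\infty\le\tfrac32\norm{x-\hat x}_\infty$ gives the asserted $\E\norm{X_t-\hat X_t}_\infty^\gamma\le C\norm{x-\hat x}_\infty^\gamma$. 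The main obstacle is exactly this estimate: after the Zvonkin reduction the diffusion coefficient is still only uniformly continuous and the transformation only Sobolev-regular, so no deterministic Gronwall argument closes, and moreover the cross term generated by the interaction of $\nabla\Phi\neq\operatorname{Id}$ (forced by the singular $b$) with the sublinear functional drift $V$ has only polynomial moments, so Khasminskii's lemma is insufficient — it is the combination of the maximal-function bound, Krylov's estimate and the stochastic Gronwall lemma, with \eqref{delayc} supplying the moment bounds and the uniformity over $\norm{x}_\infty,\norm{\hat x}_\infty\le R$, that makes the argument work.
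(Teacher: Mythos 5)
Your overall strategy is the paper's: Zvonkin transformation to remove $b$, Girsanov plus exponential moment bounds for existence, and a maximal-function/Krylov argument combined with the stochastic Gronwall lemma for uniqueness and stability. But the step that closes the stability estimate has a genuine gap. You arrive at an inequality of the form $\abs{\zeta(t)}^2\le C\norm{x-\hat x}_\infty^2\bigl(1+\int_0^t H\bigr)+\int_0^t H(s)\sup_{u\le s}\abs{\zeta(u)}^2\,\dd{s}+M(t)$ with a \emph{random, adapted} coefficient $H$, and you then invoke the von Renesse--Scheutzow lemma, asserting that finiteness of all polynomial moments of $\int_0^T H\,\dd{s}$ suffices and that ``no exponential integrability is needed''. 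The lemma (Lemma \ref{Gronwall}) requires a \emph{deterministic} constant $K$ in front of $\int_0^t\sup_{r\le s}Z(r)\,\dd{s}$; the versions that tolerate a random coefficient $\psi$ produce bounds involving $\E\exp\bigl(c\int_0^T\psi\bigr)$, so exponential integrability of precisely the quantity you dismiss is what is needed, and polynomial moments alone do not yield $\E\sup\abs{\zeta}^\gamma\le C\norm{x-\hat x}_\infty^\gamma$ (heuristically, Gronwall produces the factor $e^{\int H}$, whose moments you never control). The paper resolves exactly this point: it introduces the weight $e^{-A(t)}$ with $A(t)=\int_0^t H$, applies the constant-coefficient stochastic Gronwall lemma to $e^{-A}\abs{Z}^{2\gamma}$, and then removes the weight by Cauchy--Schwarz, which requires $\E\,e^{\frac12 A(T\wedge\tau)}<\infty$. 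Moreover, this exponential integrability is in fact available, contrary to your ``main obstacle'' paragraph: the squared maximal-function terms are exponentially integrable by Krylov/Khas'minskii (Lemma \ref{Kr4}, since $\abs{\nabla^2u}^2,\abs{\nabla\sigma}^2\in L^{q/2}_{p/2}$ with $\tfrac{2d}{p}+\tfrac{4}{q}<2$), and the cross term is handled by Young's inequality, $\mathcal{M}\abs{\nabla^2u}\cdot g\le\varepsilon g^2+C_\varepsilon\mathcal{M}^2$, together with the Gaussian-type bound of Lemma \ref{exp2} and the sublinearity in \eqref{delayc}, which makes $\varepsilon\,g(\sup\abs{X})^2$ exponentially integrable for small $\varepsilon$. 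So the combination you claim is impossible (Khas'minskii-type exponential bounds for the cross term) is exactly the mechanism that makes the proof work; without it, or without an alternative replacement for the random-coefficient Gronwall step, your argument does not close.

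A secondary point: even granting the Gronwall step, applying It\^o only to $\abs{\zeta}^2$ cannot give $\E\sup\abs{\zeta}^\gamma\le C\norm{x-\hat x}_\infty^\gamma$ for all $\gamma\ge1$, since the stochastic Gronwall lemma only controls moments of order $p<1$ of the dominated process; the paper therefore runs the It\^o computation on $\abs{\tilde Z}^{2\gamma}$ and applies the lemma with exponent $1/2$. You would need to do the same (the weight $e^{-A}$ and the exponential bound on $A$ are unchanged). Your existence and H\"older-continuity arguments are fine in spirit and essentially parallel the paper's (the paper performs Girsanov on the untransformed equation and uses a partition of the initial law to handle unbounded initial distributions, a detail you would also need for general $\xi$).
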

We can also formulate a localized version of our main result as follows.
\begin{Theorem}
	\label{locthm}
	For any $n\in\N$, let $B_n:=\{x\in\R^d:\abs{x}\leq n\}$ and assume
	\begin{enumerate}
		\item $b^i,\abs{\nabla_x\sigma^{i,j}}\in L^{q_n}_{loc}\left(\R_{\geq0};L^{p_n}\left(B_n\right)\right)$, $i,j=1,\dots,d$ where $p_n$ and $q_n$ satisfy \eqref{pq},
		\item there is a sequence of $\kappa_n>0$ such that
		$$\kappa_n^{-1}I_{d\times d}\leq\sigma(t,x)\sigma(t,x)^\top\leq \kappa_nI_{d\times d} \ \forall x\in B_n,t\in[0,n]$$
		and $\sigma$ is uniformly continuous in $x\in B_n$ uniformly with respect to $t\in[0,n]$,
		\item $V$ is locally bounded on $\R_{\geq0}\times\Co$ and for every compact $\mathcal{K}\subset\Co$ and $T>0$, there exists a constant $C_{\mathcal{K},T}$ such that
		$$\abs{V(t,x)-V(t,y)}\leq C_{\mathcal{K},T}\norm{x-y}_\infty \ \forall t\in[0,T],x,y\in\mathcal{K}.$$
	\end{enumerate}
	Then pathwise uniqueness holds and for every $x\in\Co$, there exists a maximal local solution until an (predictable) explosion time $\zeta$, i.e. $X$ solves equation \eqref{deq} on $[0,\zeta)$ with $X_0=x$ and
	$$\inf\left\{t\geq0:X_t\notin\mathcal{K}\right\}<\zeta\text{ on }\{\zeta<\infty\}$$
	for all compact $\mathcal{K}\subset\Co$.
\end{Theorem}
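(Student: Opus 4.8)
The plan is to reduce Theorem~\ref{locthm} to Theorem~\ref{thm} by a localization argument, cutting the coefficients off both in the state variable $x\in\R^d$ (for $b$ and $\sigma$) and in the path variable $x\in\Co$ (for $V$), so that the cut-off coefficients satisfy the global Conditions~\eqref{driftc}--\eqref{lip} while agreeing with $(b,\sigma,V)$ on a large region.

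Concretely, for $n\in\N$ I would fix smooth cut-offs $\chi_n:\R^d\to[0,1]$ with $\chi_n\equiv1$ on $B_{n-1}$, $\operatorname{supp}\chi_n\subset B_n$, and $\theta_n:\R_{\geq0}\to[0,1]$ with $\theta_n\equiv1$ on $[0,n-1]$, $\operatorname{supp}\theta_n\subset[0,n]$, and set $b_n:=\theta_n\chi_n\,b$, so that $\abs{b_n}\in L^{q_n}(\R_{\geq0};L^{p_n}(\R^d))$ by assumption~(1), i.e.\ Condition~\eqref{driftc}. For the diffusion I would truncate on the level of $a:=\sigma\sigma^\top$, setting $a_n:=\theta_n\chi_n\,a+(1-\theta_n\chi_n)I_{d\times d}$; by assumption~(2) each $a_n(t,x)$ is symmetric with spectrum in a fixed interval $[\kappa_n',\kappa_n'']\subset(0,\infty)$ and $a_n$ is uniformly continuous in $x$ locally uniformly in $t$, so $\sigma_n:=a_n^{1/2}$ is bounded, uniformly elliptic and uniformly continuous (Condition~\eqref{sigmac}), and — using that $A\mapsto A^{1/2}$ is Lipschitz on symmetric matrices with spectrum bounded away from $0$ and $\infty$, together with assumption~(1) and boundedness of $\sigma$ on $\operatorname{supp}(\theta_n\chi_n)$ — it satisfies Condition~\eqref{dsigmac}. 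For $V$ I would fix a modulus of continuity $\omega$ of the initial datum $x$ and take
$$\mathcal{K}_n:=\bigl\{y\in\Co:\ \norm{y}_\infty\leq n-1,\ \abs{y(s)-y(s')}\leq\omega(\abs{s-s'})+n\abs{s-s'}^{1/4}\ \forall\,s,s'\in[-r,0]\bigr\},$$
which is compact by Arzel\`a--Ascoli. By assumption~(3), $V$ is bounded on $[0,n]\times\mathcal{K}_n$, say by $R_n$, and Lipschitz in $x$ there with some constant $C_{\mathcal{K}_n,n}$, so I would put $V_n(t,x):=\theta_n(t)\,\Theta_{R_n}\bigl(E_n[V(t\wedge n,\cdot)](x)\bigr)$, where $E_n$ is a componentwise McShane extension (with constant $C_{\mathcal{K}_n,n}$) of $V(t\wedge n,\cdot)|_{\mathcal{K}_n}$ to all of $\Co$ and $\Theta_{R_n}$ is radial truncation onto the ball of radius $R_n$. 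Then $V_n$ is jointly measurable, globally bounded (hence sublinear, Condition~\eqref{delayc}) and globally Lipschitz in $x$ (Condition~\eqref{lip}), with $V_n=V$ on $[0,n-1]\times\mathcal{K}_n$. Thus $(b_n,\sigma_n,V_n)$ is globally admissible and coincides with $(b,\sigma,V)$ along any path whose segment stays in $\mathcal{K}_n$ at times $\leq n-1$.

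Next I would apply Theorem~\ref{thm} to \eqref{deq} with coefficients $(b_n,\sigma_n,V_n)$, obtaining a global strong solution $X^{(n)}$ with $X^{(n)}_0=x$, a.s.\ $\alpha$-H\"older paths for every $\alpha<1/2$, and pathwise uniqueness. Letting $\tau_n:=\inf\{t\geq0:X^{(n)}_t\notin\mathcal{K}_n^{\circ}\}\wedge(n-1)$, the initial segment lies suitably inside $\mathcal{K}_n$ for $n$ large, so $\tau_n>0$ a.s.\ and, since the coefficients seen by $X^{(n)}$ on $[0,\tau_n)$ are $(b,\sigma,V)$, the restriction $X^{(n)}|_{[0,\tau_n)}$ lies in $\mathcal{S}^{\tau_n}(x)$ for \eqref{deq}; this already yields local existence. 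Local pathwise uniqueness for \eqref{deq} follows from the same device: two local solutions with equal initial datum both solve the $(b_n,\sigma_n,V_n)$-equation up to the first time either segment leaves $\mathcal{K}_n^{\circ}$, hence coincide there by Theorem~\ref{thm}, and letting $n\to\infty$ with continuity of paths propagates this to the minimum of the two lifetimes. In particular $X^{(n)}=X^{(n+1)}$ on $[0,\tau_n]$ and $\tau_n\leq\tau_{n+1}$, so these paste together to a solution on $[0,\lim_n\tau_n)$; more importantly, by pathwise uniqueness the family of all local solutions with initial datum $x$ is consistent and directed under concatenation, and taking the essential supremum of their lifetimes produces the maximal local solution $X$ on $[0,\zeta)$, with $\zeta$ a (predictable) stopping time.

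The remaining task, and the main obstacle, is the explosion criterion: on $\{\zeta<\infty\}$ the segment process must leave every compact $\mathcal{K}\subset\Co$. I would argue by contradiction, assuming that on an event of positive probability $\zeta<\infty$ and $X_t\in\mathcal{K}$ for all $t<\zeta$ for some fixed compact $\mathcal{K}$. Then $\sup_{t<\zeta}\norm{X_t}_\infty\leq M<\infty$, and by Arzel\`a--Ascoli $\{X_t:t<\zeta\}$ is equicontinuous, so $X$ is uniformly continuous on $[-r,\zeta)$; moreover every term of the integral equation \eqref{deq} extends continuously to $t=\zeta$: $\sigma$ is bounded on $B_M\times[0,\zeta]$ by assumption~(2), so $\int_0^{\cdot}\sigma(s,X(s))\,\dd{W}(s)$ is a continuous martingale on $[0,\zeta]$; $V$ is bounded on the compact set $[0,\zeta]\times\overline{\{X_t:t<\zeta\}}$ by assumption~(3), so $\int_0^{\cdot}V(s,X_s)\,\dd{s}$ is Lipschitz on $[0,\zeta]$; and a Krylov-type estimate for solutions of \eqref{deq} with $L^{q}_{p}$-drift (as in \cite{KrylovRoeckner,Zhang2011}), combined with $X(s)\in B_M$ and the non-degeneracy of $\sigma$, bounds $\E\int_0^{\zeta}\abs{b(s,X(s))}\,\dd{s}$, so $\int_0^{\cdot}b(s,X(s))\,\dd{s}$ extends continuously as well. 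Hence $X(t)$ converges as $t\uparrow\zeta$, $X$ extends to a continuous solution of \eqref{deq} on $[0,\zeta]$ with $X_\zeta\in\Co$, and running the localized construction afresh at time $\zeta$ from $X_\zeta$, driven by $(W(\zeta+t)-W(\zeta))_{t\geq0}$, produces a local solution on $[\zeta,\zeta+\varepsilon)$ for some $\varepsilon>0$; concatenating with $X$ gives a local solution of lifetime $>\zeta$, contradicting maximality. The crucial point is that since $\Co=C([-r,0],\R^d)$ is infinite-dimensional and not $\sigma$-compact there is no exhausting sequence of compacts, so the explosion time cannot be characterized by $\limsup_{t\uparrow\zeta}\norm{X_t}_\infty=\infty$ and the criterion genuinely has to be verified for each compact $\mathcal{K}$; confining the path to a compact subset of $\Co$ is exactly what makes $[0,\zeta]\times\overline{\{X_t\}}$ compact, so that the merely locally bounded $V$ becomes bounded along the trajectory, while Krylov's estimate absorbs the singular drift $b$. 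Checking joint measurability and the global ellipticity/Lipschitz bounds of the truncated coefficients, and making the stopping-time property of $\zeta$ precise, are routine.
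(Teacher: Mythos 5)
The decisive step of your reduction fails at the localization of the diffusion coefficient. You replace $\sigma$ by $\sigma_n:=a_n^{1/2}$ with $a_n:=\theta_n\chi_n\,a+(1-\theta_n\chi_n)I$, but on the core region $[0,n-1]\times B_{n-1}$ this equals the \emph{symmetric} square root $(\sigma\sigma^\top)^{1/2}$, which in general differs from $\sigma$ (they agree only when $\sigma$ is itself symmetric positive definite). Hence your claim that $(b_n,\sigma_n,V_n)$ ``coincides with $(b,\sigma,V)$ along any path whose segment stays in $\mathcal{K}_n$'' is false, and both halves of the argument break: the solution $X^{(n)}$ of the truncated equation, stopped before exiting $\mathcal{K}_n$, is \emph{not} a local solution of \eqref{deq} driven by the given $W$ (only of an equation with the same generator, via the rotated Brownian motion $\int_0^\cdot a^{-1/2}\sigma(s,X^{(n)}(s))\,\dd{W}(s)$, which depends on the solution itself); and pathwise uniqueness for the $a^{1/2}$-equation does not transfer to the $\sigma$-equation, since two solutions $X,\hat X$ of \eqref{deq} driven by the same $W$ correspond to solutions of the $a^{1/2}$-equation driven by two \emph{different} Brownian motions. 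The naive alternative $\theta_n\chi_n\sigma+(1-\theta_n\chi_n)I$ also fails (it can degenerate, e.g.\ for $\sigma=-I$). This is exactly where the paper does something more careful: it keeps $\sigma$ itself and sets $\sigma^n(t,x):=\sigma(t,\phi^n(x))$ with a $C^1$-diffeomorphism $\phi^n$ of $\R^d$ onto $B_{n+1}$ equal to the identity on $B_n$, whose radial profile (exponent $\alpha_n=\frac{p_{n+1}-d}{2}$) is calibrated so that $\nabla_x\sigma^n$ stays in $L^{q}_{loc}\left(\R_{\geq0};L^{p}\left(\R^d\right)\right)$; thus conditions \eqref{sigmac}--\eqref{dsigmac} hold globally while $\sigma^n=\sigma$ on $B_n$, and the truncated equation genuinely agrees with \eqref{deq} up to the exit time.

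Two further points. First, compact subsets of the infinite-dimensional space $\Co$ have empty interior, so your stopping time $\inf\{t\geq0:X^{(n)}_t\notin\mathcal{K}_n^{\circ}\}$ is identically zero; you must stop at the exit from $\mathcal{K}_n$ itself, as the paper does. Second, in the uniqueness step you apply the reduction to two \emph{arbitrary} local solutions, so you need their exit times from your H\"older-type compacts to increase to the common lifetime; mere continuity of paths does not give this for sets defined through a $1/4$-H\"older seminorm -- one needs the a.s.\ local H\"older regularity of any local solution (the paper's Lemma \ref{Hoelder}, derived from the Krylov estimates, applied after localization). Your treatment of the explosion criterion by continuous extension and restart is more detailed than the paper's rather terse assertion and would be acceptable once the diffusion localization is repaired; apart from that, your overall architecture (cut off the coefficients, apply Theorem \ref{thm}, paste via pathwise uniqueness) is the same as the paper's.
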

Finally, if $\sigma$ has no space dependence, we can relax the Lipschitz condition on $V$ as follows.
\begin{Theorem}
	\label{constthm}
	Assume all conditions from Theorem \ref{thm}. If $\sigma$ has no space dependence, i.e.
	$$\sigma(t,x)=\sigma(t), \ t\in\R_{\geq0},x\in\R^d,$$
	then one can replace the Lipschitz-condition \eqref{lip} with $$\abs{V\left(t,x+\gamma_t\right)-V(t,x)}\leq K\norm{\gamma_t}_\infty \ \forall t\geq0,\gamma\in\mathcal{H}_t,x\in\Co$$
	for some $K>0$ where
	\begin{align*}
		\mathcal{H}_t&:=\left\{\gamma\in H^1\left([-r,t],\R^d\right):\gamma(s)=0,-r\leq s\leq0\right\},\\
		H^1\left([-r,t],\R^d\right)&:=\left\{\gamma:[-r,t]\to\R^d\text{ absolutely continuous with }\int_{0}^{t}\abs{\dot{\gamma}(s)}^2\dd{s}<\infty\right\}
	\end{align*}
	to obtain pathwise uniqueness and global existence of a solution of equation \eqref{deq}.
\end{Theorem}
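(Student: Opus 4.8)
The plan is to reduce the statement to Theorem~\ref{thm}. In the proof of that theorem, the Lipschitz hypothesis \eqref{lip} should enter the pathwise--uniqueness argument only through the inequality $\abs{V(t,X_t)-V(t,\hat X_t)}\le K\norm{X_t-\hat X_t}_\infty$ evaluated along a pair of solutions; the observation I would exploit is that, when $\sigma$ does not depend on $x$, the weaker hypothesis already yields this inequality along solution pairs. For existence I would not run a Picard scheme in the Zvonkin--transformed variables (the transformed diffusion coefficient depends on $x$, so differences of iterates carry a nonvanishing martingale part and the weak Lipschitz bound on $V$ is not stable along the iteration); instead I would construct a weak solution by Girsanov's theorem --- which is available precisely because $\sigma$ has no space dependence --- and upgrade it through the Yamada--Watanabe theorem.

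For pathwise uniqueness, I would take $X\in\mathcal{S}^{\tau}(x)$, $\hat X\in\mathcal{S}^{\hat\tau}(x)$, and, for $R,T>0$, put $\tau_{R,T}:=\tau\wedge\hat\tau\wedge T\wedge\inf\{t\ge0:\abs{X(t)}\vee\abs{\hat X(t)}\ge R\}$. Since $\sigma(t,x)=\sigma(t)$ and both solutions are driven by the same $W$, their stochastic integrals cancel, so on $[0,\tau_{R,T}]$
\[
X(t)-\hat X(t)=\int_0^t\bigl(V(s,X_s)-V(s,\hat X_s)\bigr)\dd{s}+\int_0^t\bigl(b(s,X(s))-b(s,\hat X(s))\bigr)\dd{s},
\]
while $X(s)-\hat X(s)=0$ on $[-r,0]$ because $X_0=\hat X_0=x$. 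The first integrand is bounded by $2g(R)$ by \eqref{delayc}, and $\int_0^{\tau_{R,T}}\abs{b(s,X(s))-b(s,\hat X(s))}^2\dd{s}<\infty$ almost surely by Krylov's estimate applied to $X$ and to $\hat X$ --- this is where \eqref{pq} enters, it being equivalent to $\tfrac{2d}{p}+\tfrac{4}{q}<2$, the exponent condition under which Krylov's estimate controls $\abs{b}^2$. Hence the path $s\mapsto X(s)-\hat X(s)$, restricted to $[-r,t]$, lies almost surely in $\mathcal{H}_t$ for every $t\le\tau_{R,T}$, and applying the assumed inequality for $V$ with $\gamma:=X-\hat X$ and base point $\hat X_s$ gives $\abs{V(s,X_s)-V(s,\hat X_s)}\le K\norm{X_s-\hat X_s}_\infty$ on $[0,\tau_{R,T}]$. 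This is exactly what \eqref{lip} would supply, so the remaining steps of the proof of Theorem~\ref{thm} --- removing $b$ by Zvonkin's transformation and applying the stochastic Gronwall lemma --- should carry over unchanged and give $X=\hat X$ on $[0,\tau_{R,T}]$; letting $R,T\to\infty$ then yields (local) pathwise uniqueness.

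For existence I would fix $x\in\Co$ and set $Z(t):=x(0)+\int_0^t\sigma(s)\dd{W}(s)$, $t\ge0$, $Z_0:=x$, and $u(s):=\sigma(s)^{-1}\bigl(V(s,Z_s)+b(s,Z(s))\bigr)$. For fixed $T>0$, $Z$ is Gaussian on $[0,T]$, so $\sup_{s\le T}\norm{Z_s}_\infty$ has a Gaussian tail; together with $g(\rho)/\rho\to0$ this controls the exponential moments of $\int_0^T\abs{\sigma(s)^{-1}V(s,Z_s)}^2\dd{s}$, while the corresponding moments with $b$ in place of $V$ are finite by Khasminskii's lemma and Krylov's estimate for $Z$ (again using \eqref{pq}). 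Novikov's condition therefore holds for $u$, so $\mathcal{E}\bigl(\int_0^\cdot u(s)\dd{W}(s)\bigr)$ is a martingale on $[0,T]$, and under the measure $\Prob^*$ that it induces on $\mathcal{F}_T$ the process $W^*(t):=W(t)-\int_0^tu(s)\dd{s}$ is a Brownian motion and $Z$ solves
\[
\dd{Z}(t)=V(t,Z_t)\dd{t}+b(t,Z(t))\dd{t}+\sigma(t)\dd{W^*}(t),\qquad Z_0=x .
\]
Thus \eqref{deq} has a weak solution on $[0,T]$ for every $T>0$, hence on $[0,\infty)$; combined with the pathwise uniqueness above, the Yamada--Watanabe theorem for functional stochastic differential equations then furnishes a global strong solution, which does not explode by the sublinearity of $V$.

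The step I expect to be the main obstacle is the second paragraph: verifying that $X-\hat X$ genuinely belongs to $\mathcal{H}_t$ --- the cancellation of the martingale parts rests on $\sigma$ being $x$--independent, and the square integrability of $b(\cdot,X)-b(\cdot,\hat X)$ rests on the sharp exponent condition \eqref{pq} through Krylov's estimate --- together with the bookkeeping needed to be sure that the proof of Theorem~\ref{thm} invokes \eqref{lip} only through the displayed inequality. The existence part, by contrast, should be routine once the space--independence of $\sigma$ is used to set up the Girsanov change of measure.
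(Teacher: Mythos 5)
Your proposal is correct and follows essentially the same route as the paper: the paper likewise observes that \eqref{lip} enters the uniqueness proof of Theorem \ref{Uniqueness} only through $\abs{V(t,X_t)-V(t,\hat X_t)}\leq K\norm{X_t-\hat X_t}_\infty$, and that for two solutions with the same initial segment and space-independent $\sigma$ the martingale parts cancel, so $X-\hat X\in\mathcal{H}_t$ almost surely (square-integrability of the $b$-difference coming from Lemma \ref{Kr4}), after which the Zvonkin/stochastic-Gronwall argument runs unchanged. Your existence step, a direct Girsanov change of measure from the driftless Gaussian process, merely re-derives in this special case what the paper gets by citing its Theorem \ref{Existence} (which does not use \eqref{lip}), so it is the same mechanism.
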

This kind of condition is strongly related to Malliavin-differentiable functions with bounded Malliavin-derivative, see for example \cite{enchev1993}.
\begin{Remark}
	An example for a discontinuous functional $V:\Co\to\R^d$, which fulfills the condition stated in Theorem \ref{constthm}, is the following:
	$$V(x)=
	\begin{cases}
		0,						&	x\in H^1\left([-r,0],\R^d\right),\\
		\int_{-r}^{0}x(t)\dd{t}	&	\text{otherwise}.
	\end{cases}
	$$ 
\end{Remark}
\section{Existence}
\subsection{Krylov-Type Estimates for the Non-Delay-Case}
In this subsection we consider the case $V\equiv0$.
\begin{Theorem}
	\label{Kr1}
	Assume condition \eqref{sigmac} and that $b$ is bounded and measurable. Furthermore, let $X\in\mathcal{S}^\tau(\xi)$ for some $\mathcal{F}_0$-measurable, $\Co$-valued random variable $\xi$ and stopping time $\tau$. Let $T_0>0$ and $p',q'\in(0,\infty)$ be given with
	$$\frac{d}{p'}+\frac{2}{q'}<2,$$
	there exists a constant $C(d,p',q',T_0,\kappa,\norm{b}_\infty)$ such that for all $f\in L^{q'}_{p'}(T_0)$ and $0\leq S\leq T\leq T_0$,
	$$\E\left[\int_{S\wedge\tau}^{T\wedge\tau}f(s,X(s))\dd{s}\bigg|\mathcal{F}_S\right]\leq C\norm{f}_{L^{q'}_{p'}(S,T)}.$$
\end{Theorem}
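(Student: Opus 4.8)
The plan is to reduce this to the classical Krylov estimate for Itô processes via the analytic solvability of the associated parabolic PDE. Since $V\equiv0$ here and $b$ is bounded, the process $X$ on $[0,\tau]$ satisfies $\dd X(t) = b(t,X(t))\dd t + \sigma(t,X(t))\dd W(t)$ with bounded measurable drift and uniformly elliptic, bounded diffusion coefficient; so on the stochastic interval $[0,\tau]$ it is a (stopped) Itô diffusion with bounded coefficients. First I would fix $f\in L^{q'}_{p'}(T_0)$ with $f\geq0$ (the general case by splitting into positive and negative parts), and for given $0\le S\le T\le T_0$ consider the backward parabolic equation
\begin{align*}
	\partial_t u + \tfrac12\operatorname{tr}\!\left(\sigma\sigma^\top D^2 u\right) + b\cdot\nabla u + f = 0 \text{ on } [S,T], \qquad u(T,\cdot)=0,
\end{align*}
or, to avoid dealing with the merely-measurable $b$ inside the PDE, the simpler equation with the drift term dropped (at the price of a Girsanov change of measure, or by keeping $b\cdot\nabla u$ and using that $b$ is bounded so it is a lower-order perturbation). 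The condition $\frac{d}{p'}+\frac{2}{q'}<2$ is exactly the scaling under which the Calderón–Zygmund / $L^p$-parabolic theory (Krylov's $W^{1,q'}_{p'}$ estimates for non-divergence form equations, using uniform ellipticity and uniform continuity of $\sigma$ from Condition \eqref{sigmac}) gives a solution $u\in H^{q'}_{2,p'}(S,T)$ with
\begin{align*}
	\norm{u}_{H^{q'}_{2,p'}(S,T)} + \norm{u}_{L^\infty([S,T]\times\R^d)} + \norm{\nabla u}_{L^\infty} \le C\,\norm{f}_{L^{q'}_{p'}(S,T)},
\end{align*}
the $L^\infty$-bounds on $u$ and $\nabla u$ coming from the sub-critical exponent via Sobolev embedding in the parabolic scale. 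Note $C$ depends only on $d,p',q',T_0,\kappa$ and, through the bound on $b$, on $\norm{b}_\infty$.

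The second step is to apply Itô's formula to $u(t,X(t))$ on $[S\wedge\tau, T\wedge\tau]$. Because $u$ lives in $H^{q'}_{2,p'}$ and not in $C^{1,2}$, I would first mollify, $u^\varepsilon := u * \rho_\varepsilon$, apply the classical Itô formula to $u^\varepsilon(t,X(t))$, and then pass to the limit using the Krylov estimate itself (in the form already available for bounded $b$ and $\sigma$, i.e. the a priori version one gets directly from the John–Nirenberg / Krylov argument for Itô processes) to control the error terms $\int (\partial_t u^\varepsilon - \partial_t u)$, $\int (\nabla u^\varepsilon-\nabla u)$, $\int(D^2u^\varepsilon - D^2 u)$ evaluated along $X$; these converge to zero since $u^\varepsilon\to u$ in $H^{q'}_{2,p'}$ and the relevant exponents are sub-critical. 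In the limit, using the PDE, the drift and second-order terms combine to $-\int_{S\wedge\tau}^{T\wedge\tau} f(s,X(s))\dd s$, leaving
\begin{align*}
	u(T\wedge\tau, X(T\wedge\tau)) - u(S\wedge\tau,X(S\wedge\tau)) = -\int_{S\wedge\tau}^{T\wedge\tau} f(s,X(s))\dd s + M_T - M_S,
\end{align*}
where $M_t:=\int_0^{t} \nabla u(s,X(s))^\top\sigma(s,X(s))\dd W(s)$ is a martingale (its integrand is bounded, by the $L^\infty$-gradient bound and boundedness of $\sigma$).

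The third step is to take conditional expectation given $\mathcal F_S$: the martingale increment drops out, and
\begin{align*}
	\E\!\left[\int_{S\wedge\tau}^{T\wedge\tau} f(s,X(s))\dd s \;\Big|\;\mathcal F_S\right] = \E\!\left[u(S\wedge\tau,X(S\wedge\tau)) - u(T\wedge\tau,X(T\wedge\tau))\,\big|\,\mathcal F_S\right] \le 2\norm{u}_{L^\infty} \le C\,\norm{f}_{L^{q'}_{p'}(S,T)},
\end{align*}
which is the claim. (For general signed $f$ one applies this to $|f|$.) I expect the main obstacle to be the justification of the generalized Itô formula and the limit procedure: one needs the Krylov estimate for the stopped Itô process $X$ with \emph{bounded} coefficients as an a priori tool before one can upgrade it to the $L^{q'}_{p'}$ statement — this is the standard bootstrap in this area, so I would either cite the bounded-coefficient Krylov estimate (e.g. from Krylov's book or the Krylov–Röckner paper) or include the short John–Nirenberg-type argument, and then carefully track that all constants remain of the stated form. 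A minor secondary point is handling the stopping time $\tau$ and the non-trivial initial data $X(S\wedge\tau)$, which is why the $L^\infty$-bound on $u$ (rather than just an $L^{q'}_{p'}$-bound) is essential and why the sub-critical exponent $\frac{d}{p'}+\frac{2}{q'}<2$, not the critical $<1$, is the right hypothesis.
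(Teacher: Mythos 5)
The paper does not actually prove Theorem \ref{Kr1}: its ``proof'' is the citation of Zhang \cite{Zhang2011}, and your PDE-plus-It\^o strategy is exactly the standard argument behind that citation, so your outline is the right one. There are, however, two concrete gaps in the way you have written it. First, the bound $\norm{\nabla u}_{L^\infty}\leq C\norm{f}_{L^{q'}_{p'}(S,T)}$ is not available under the hypothesis $\frac{d}{p'}+\frac{2}{q'}<2$: by the parabolic embedding (Theorem \ref{embedding} of this paper), boundedness of $\nabla u$ requires the stronger condition $\frac{d}{p'}+\frac{2}{q'}<1$, while under $<2$ only $u$ itself is bounded and H\"older. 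Hence your claim that the stochastic integral $M$ has a bounded integrand and is therefore a true martingale is unjustified. This is repairable without the gradient bound: treat $M(t)=\int_0^{t}\nabla u(s,X(s))^\top\sigma(s,X(s))\dd{W}(s)$ as a local martingale, localize by stopping times $\tau_n\uparrow\infty$, use $\E\left[M(T\wedge\tau\wedge\tau_n)-M(S\wedge\tau\wedge\tau_n)\,\middle|\,\mathcal{F}_S\right]=0$, the boundedness of $u$, and $f\geq0$ together with monotone convergence to remove $\tau_n$; the full martingale property is never needed.

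Second, the circularity you flag at the end is the actual crux, and citing the classical bounded-coefficient Krylov estimate does not by itself close it. That a priori estimate controls expressions like $\E\int\abs{D^2u^\varepsilon-D^2u}(s,X(s))\dd{s}$ through the $L^{d+1}([0,T_0]\times\R^d)$-norm of the difference, whereas your mollification gives convergence $u^\varepsilon\to u$ only in $H^{q'}_{2,p'}(S,T)$; when $p'<d+1$ or $q'<d+1$ this does not yield convergence in the norm your a priori tool sees, and controlling the error in the $L^{q'}_{p'}$-norm along $X$ is precisely the estimate being proven. A genuine bootstrap or approximation step is needed here --- for instance, smoothing $\sigma$ (and $b$) so that the PDE solution is classical, applying It\^o's formula exactly, obtaining the estimate with a constant depending only on $d,p',q',T_0,\kappa,\norm{b}_\infty$, and then passing to the limit using weak uniqueness for the limiting SDE; or first establishing the statement for exponent pairs accessible from the classical $L^{d+1}$ estimate and then upgrading. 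This is the substantive content of the cited reference, and as written your limit procedure is not justified. Minor points: keeping $b\cdot\nabla u$ in the PDE is fine (bounded $b$ is a lower-order perturbation since $\nabla u\in L^{q'}_{p'}$ by interpolation), as is the Girsanov alternative, and your reduction to $f\geq0$ and the conditioning step given $\mathcal{F}_S$ are correct.
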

\begin{proof}
	See \cite{Zhang2011}.
\end{proof}
\begin{Lemma}
		\label{Kr3}
		Assume conditions \eqref{driftc}, \eqref{sigmac}, \eqref{dsigmac} and let $X\in\mathcal{S}^{T}(\xi)$ for some $\mathcal{F}_0$-measurable, $\Co$-valued random variable $\xi$ and $T>0$. Let $p',q'\in(0,\infty)$ be given with
		$$\frac{d}{p'}+\frac{2}{q'}<2.$$
		Then for every $R\geq0$, there exists a constant $C_R=C_R(p,q,p',q',d,T,\kappa,\norm{b}_{L^q_p(T)})$ such that
		$$\E\exp\left(\int_{0}^{T}f(t,X(t))\dd{t}\right)\leq C_R$$
		for all $f\in L^{q'}_{p'}(T)$ with $\norm{f}_{L^{q'}_{p'}(T)}\leq R$. Additionally, one has
		$$\E\int_{0}^{T}f(t,X(t))\dd{t}\leq C\norm{f}_{L^{q'}_{p'}(T)}$$
		for some constant $C>0$.
\end{Lemma}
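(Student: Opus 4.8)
The plan is to derive both estimates from a Khasminskii-type iteration of the Krylov bound of Theorem~\ref{Kr1}, after reducing the possibly unbounded drift $b\in L^q_p$ to a bounded one by a Girsanov transformation; the delicate point will be to verify that this change of measure preserves total mass. Assume first that $b$ is bounded, let $C=C(d,p',q',T,\kappa,\norm{b}_\infty)$ be the constant of Theorem~\ref{Kr1}, and set $\varepsilon_0:=(2C)^{-1}$. For $f\in L^{q'}_{p'}(T)$ with $\norm{f}_{L^{q'}_{p'}(T)}\le R$, the map $t\mapsto\norm{f(t,\cdot)}_{L^{p'}}^{q'}$ defines a finite measure on $[0,T]$ which is absolutely continuous with respect to Lebesgue measure, hence non-atomic, so one can pick $0=t_0<\dots<t_n=T$ with $\norm{f}_{L^{q'}_{p'}(t_{i-1},t_i)}\le\varepsilon_0$ for each $i$ and with $n$ bounded by a number depending only on $R$ and the above parameters. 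Applying Theorem~\ref{Kr1} to $\abs{f}$ on each block gives $\E[\int_{t_{i-1}}^{t_i}\abs{f(s,X(s))}\dd{s}\mid\mathcal{F}_{t_{i-1}}]\le\tfrac12$, hence by the usual iteration $\E[(\int_{t_{i-1}}^{t_i}\abs{f(s,X(s))}\dd{s})^k\mid\mathcal{F}_{t_{i-1}}]\le k!\,2^{-k}$ and therefore $\E[\exp(\int_{t_{i-1}}^{t_i}\abs{f(s,X(s))}\dd{s})\mid\mathcal{F}_{t_{i-1}}]\le2$; multiplying along the partition via the tower property yields $\E\exp(\int_0^T f(s,X(s))\dd{s})\le 2^n=:C_R$, while the linear bound is the case $S=0$ of Theorem~\ref{Kr1}. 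Moreover, removing the entire bounded drift by Girsanov (legitimate since $\sigma^{-1}b$ is then bounded) and applying the above to the resulting driftless diffusion, H\"older's inequality reduces the needed integrability of the corresponding Radon--Nikodym density to a bound on $\E\exp(c\int_0^T\abs{\sigma^{-1}b(s,X(s))}^2\dd{s})$ for that diffusion --- again of the type just proved --- so that $C$ and $C_R$ may in fact be taken to depend on $b$ only through $\norm{b}_{L^q_p(T)}$; this sharpened form is what we use below.

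For general $b$ with $\abs{b}\in L^q_p$ write $b=b_N+\rho_N$ with $b_N:=b\1_{\{\abs{b}\le N\}}$ bounded and $\rho_N:=b\1_{\{\abs{b}>N\}}$, where $N$ is chosen so that $\norm{\rho_N}_{L^q_p(T)}\le1$. Put $Z(s):=\sigma^{-1}(s,X(s))\rho_N(s,X(s))$; by uniform ellipticity $\abs{Z}^2\le\kappa\abs{\rho_N(\cdot,X)}^2$ with $\norm{\abs{\rho_N}^2}_{L^{q/2}_{p/2}(T)}=\norm{\rho_N}_{L^q_p(T)}^2$, and crucially $\tfrac{d}{p/2}+\tfrac{2}{q/2}=2\big(\tfrac{d}{p}+\tfrac{2}{q}\big)<2$ by Condition~\eqref{driftc}, so $\abs{Z}^2$ lies in a class to which the estimates of the first step apply. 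Let $\mathcal{E}(t):=\exp\big(-\int_0^t Z(s)\cdot\dd{W}(s)-\tfrac12\int_0^t\abs{Z(s)}^2\dd{s}\big)$. Granting for the moment that $\mathcal{E}$ is a true martingale on $[0,T]$, let $\widetilde{\Prob}$ be the probability measure with density $\mathcal{E}(T)$ with respect to $\Prob$; under $\widetilde{\Prob}$ the process $X$ solves the same equation with the drift $b$ replaced by the \emph{bounded} $b_N$, so by Cauchy--Schwarz
$$\E^{\Prob}\exp\Big(\int_0^T f(s,X(s))\dd{s}\Big)=\E^{\widetilde{\Prob}}\Big[\mathcal{E}(T)^{-1}\exp\big(\int_0^T f(s,X(s))\dd{s}\big)\Big]\le\big(\E^{\widetilde{\Prob}}\mathcal{E}(T)^{-2}\big)^{1/2}\Big(\E^{\widetilde{\Prob}}\exp\big(2\!\int_0^T f(s,X(s))\dd{s}\big)\Big)^{1/2}.$$
The second factor is bounded by the first step (now under $\widetilde{\Prob}$, uniformly over $\norm{f}_{L^{q'}_{p'}(T)}\le R$), and the first is finite and controlled in terms of $\norm{b}_{L^q_p(T)}$ by expanding $\mathcal{E}(T)^{-2}$ and arguing exactly as in the sharpened estimate above, with $Z$ in place of $\sigma^{-1}b$. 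This proves the first claim; the second follows in the same way from the conditional Krylov estimate for the $\widetilde{\Prob}$-diffusion transported back to $\Prob$.

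It remains to show that $\mathcal{E}$ is genuinely a martingale --- not merely the supermartingale it automatically is as a nonnegative local martingale --- and this is the main obstacle. Let $\tau_n:=\inf\{t\ge0:\int_0^t\abs{Z(s)}^2\dd{s}\ge n\}\wedge T$. On $[0,\tau_n]$ Novikov's condition holds trivially, so $\mathcal{E}(\cdot\wedge\tau_n)$ is a martingale and the probability $\widetilde{\Prob}^{(n)}$ with density $\mathcal{E}(\tau_n)$ relative to $\Prob$ is one under which $X$ has, up to $\tau_n$, the bounded drift $b_N$. Expressing $\E^{\Prob}\mathcal{E}(T)$ through the approximants $\mathcal{E}(\tau_n)$ and letting $n\to\infty$, the martingale property reduces to $\E^{\Prob}[\mathcal{E}(\tau_n)\1_{\{\tau_n<T\}}]=\widetilde{\Prob}^{(n)}(\tau_n<T)\to0$. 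Since $\{\tau_n<T\}\subseteq\{\int_0^T\abs{Z(s)}^2\dd{s}\ge n\}$, Markov's inequality together with the exponential estimate of the first step applied to $X$ under $\widetilde{\Prob}^{(n)}$ --- whose drift $b_N$ is bounded with a constant uniform in $n$, since $\norm{b_N}_{L^q_p(T)}\le\norm{b}_{L^q_p(T)}$ --- gives
$$\widetilde{\Prob}^{(n)}(\tau_n<T)\le e^{-n}\,\E^{\widetilde{\Prob}^{(n)}}\exp\Big(\int_0^T\abs{Z(s)}^2\dd{s}\Big)\le e^{-n}\,\E^{\widetilde{\Prob}^{(n)}}\exp\Big(\kappa\!\int_0^T\abs{\rho_N(s,X(s))}^2\dd{s}\Big)\le C\,e^{-n}\longrightarrow0,$$
so $\mathcal{E}$ is a martingale and the proof is complete. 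The $\Prob$-a.s.\ finiteness of $\int_0^T\abs{b(s,X(s))}^2\dd{s}$ tacitly used in the localization follows from a weak-type Krylov bound valid for arbitrary solutions; alternatively, the Krylov- and Khasminskii-type estimates for $L^q_p$-drifts can be quoted directly from \cite{Zhang2011} (cf.\ also \cite{KrylovRoeckner}), leaving only the elementary iteration of the first step to be done here.
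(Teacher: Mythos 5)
Your first step --- the Khasminskii-type iteration over a time partition for bounded drift, followed by a further Girsanov step to make the constants depend on $b$ only through $\norm{b}_{L^q_p(T)}$ --- is sound and close in spirit to the paper's combination of Theorem \ref{Kr1}, Lemma \ref{Khas} and Young's inequality. The genuine gap lies in your reduction of the general case to the bounded one. You put the Girsanov exponential $\mathcal{E}$, built from $Z=\sigma^{-1}\rho_N(\cdot,X)$, directly on the given solution $X$, so the whole weight of the proof falls on showing that $\mathcal{E}$ is a true martingale, and your localization argument does not close. Under $\widetilde{\Prob}^{(n)}$ the process $X$ has the bounded drift $b_N$ only on $[0,\tau_n]$; on $(\tau_n,T]$ its drift is still the full singular $b$, so the bounded-drift exponential estimate of your first step cannot be applied to $\int_0^T\abs{Z(s)}^2\dd{s}$ under $\widetilde{\Prob}^{(n)}$, and the claimed bound $\E_{\widetilde{\Prob}^{(n)}}\exp\left(\int_0^T\abs{Z(s)}^2\dd{s}\right)\leq C$ uniformly in $n$ is unjustified. (Restricting to $\int_0^{\tau_n}\abs{Z(s)}^2\dd{s}$, which equals $n$ on $\{\tau_n<T\}$, and using the stopped form of Theorem \ref{Kr1} would repair this particular step.) More seriously, the limiting identity $\E_\Prob\,\mathcal{E}(T)=1$ also requires $\int_0^T\abs{Z(s)}^2\dd{s}<\infty$ $\Prob$-a.s., i.e.\ an a priori Krylov-type bound along the \emph{arbitrary} solution $X$ --- exactly the kind of information the lemma is meant to deliver. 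Your appeal to an unspecified ``weak-type Krylov bound valid for arbitrary solutions'' is not available in the paper and, as stated, is essentially circular.

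The clean way out --- and the paper's route --- is to avoid putting the exponential on $X$ altogether: take the auxiliary driftless strong solution $M$ of $\dd{M}=\sigma(t,M)\dd{W}$ with $M_0=\xi$, verify Novikov for $\sigma^{-1}b(\cdot,M)$ via Theorem \ref{Kr1} (possible because $M$ has zero drift and $\abs{b}^2$ lies in $L^{q/2}_{p/2}$ with $\tfrac{d}{p/2}+\tfrac{2}{q/2}<2$), so that $(M,\tilde{W},\mathds{Q})$ is a weak solution of the equation with drift $b$, and then invoke uniqueness in distribution (Theorem 1.1 of \cite{Zhang2011}) to identify the law of $X$ under $\Prob$ with that of $M$ under $\mathds{Q}$; the exponential bound then follows by Cauchy--Schwarz exactly as in your bounded-drift computation, and the linear bound follows either by your second-moment argument or by the paper's continuity-of-the-functional argument. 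Note also that your fallback of ``quoting the $L^q_p$ estimates of \cite{Zhang2011} directly'' transfers to an arbitrary $X\in\mathcal{S}^T(\xi)$ only through this same uniqueness-in-law step, so it cannot be dispensed with; once you use it, the direct Girsanov-plus-localization machinery on $X$ becomes unnecessary.
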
	
\begin{proof}
	Consider the global strong solution $M$ of the stochastic differential equation
	\begin{align*}
		\dd{M}(t)&=\sigma(t,M(t))\dd{W}(t),\\
		M_0&=\xi.
	\end{align*}
	Let $f\in L^{q'}_{p'}(T)$ with $\norm{f}_{L^{q'}_{p'}(T)}\leq R$. Since the inequality for $p'$ and $q'$ is strict, one can choose $\delta>1$ small enough such that
	$$\frac{d\delta}{p'}+\frac{2\delta}{q'}<2.$$
	By Theorem \ref{Kr1}, one has
	$$\E\left[\int_{S}^{T}\abs{f(t,M(t))}^\delta\dd{t}\bigg|\mathcal{F}_S\right]\leq C\norm{f}_{L^{q'}_{p'}(S,T)}^{\delta}$$
	for some constant $C(d,p',q',\kappa,\delta,T)$. Now, choose
	$$\varepsilon:=\frac{1}{2C\norm{f}_{L^{q'}_{p'}(T)}^\delta\vee1}.$$
	By Lemma \ref{Khas} and Young's inequality, one obtains
	\begin{align*}
		\E\exp\left(\int_{0}^{T}\abs{f(t,M(t))}\dd{s}\right)&\leq\E\exp\left( \int_{0}^{T}\varepsilon\abs{f(t,M(t))}^\delta\dd{s}+C_{\varepsilon,\delta,T}\right)\\
		&\leq2e^{C_{\varepsilon,\delta,T}}
	\end{align*}
	with some $C_{\varepsilon,\delta,T}>0$. Due to condition \eqref{sigmac} and
	$$\frac{d}{p}+\frac{2}{q}<1,$$
	one can apply the same method from above for $t\mapsto\sigma^{-1}(t,M(t))b(t,M(t))$ to obtain
	$$\E\exp\left(6\int_{0}^{T}\abs{\sigma^{-1}b(t,M(t))}^2\dd{t}\right)<\infty.$$
	In particular, the process $t\mapsto\sigma(t,M(t))^{-1}b(t,M(t))$ fulfills the Novikov condition and $(M,\tilde{W},\mathds{Q})$ is a weak solution of the equation
	\begin{align*}
		\dd{M}(t)&=b(t,M(t))\dd{t}+\sigma(t,M(t))\dd{\tilde{W}}(t),\\
		M_0&=\xi
	\end{align*}
	with the probability measure
	$$\dd{\mathds{Q}}:=\exp\left(\int_{0}^{T}\sigma(t,M(t))^{-1}b(t,M(t))\cdot\dd{W}(t)-\frac{1}{2}\int_{0}^{T}\abs{\sigma(t,M(t))^{-1}b(t,M(t))}^2\dd{t}\right)\dd{\Prob}$$
	and the Brownian motion
	$$\tilde{W}(t):=W(t)-\int_{0}^{t}\sigma(s,M(s))^{-1}b(s,M(s))\dd{s}.$$
	Due to Theorem 1.1 in \cite{Zhang2011}, uniqueness in distribution holds for weak solutions of the SDE
	\begin{align*}
	\dd{X}(t)&=b(t,X(t))\dd{t}+\sigma(t,X(t))\dd{W}(t),\\
	X_0&=\xi
	\end{align*}
	and the estimates from above provide
	\begin{align*}
		&\E_{\Prob}\exp\left(\int_{0}^{T}f(t,X(t))\dd{t}\right)\\
		=&\E_{\mathds{Q}}\exp\left(\int_{0}^{T}f(t,M(t))\dd{t}\right)\\
		=&\E_{\Prob}\exp\bigg(\int_{0}^{T}f(t,M(t))\dd{t}+\int_{0}^{T}\sigma(t,M(t))^{-1}b(t,M(t))\cdot\dd{W}(t)\\
		&\hspace{140pt}-\frac{1}{2}\int_{0}^{T}\abs{\sigma(t,M(t))^{-1}b(t,M(t))}^2\dd{t}\bigg)\\
		\leq&\left[\E_{\Prob}\exp\left(\int_{0}^{T}2f(t,M(t))\dd{t}\right)\right]^\frac{1}{2}\bigg[\E_\Prob\exp\bigg(2\int_{0}^{T}\sigma(t,M(t))^{-1}b(t,M(t))\cdot\dd{W}(t)\\
		&\hspace{190pt}-\int_{0}^{T}\abs{\sigma(t,M(t))^{-1}b(t,M(t))}^2\dd{t}\bigg)\bigg]^\frac{1}{2}\\
		\leq&\left[\E_{\Prob}\exp\left(\int_{0}^{T}2f(t,M(t))\dd{t}\right)\right]^\frac{1}{2}
		\cdot\left[\E_\Prob\exp\left(6\int_{0}^{T}\abs{\sigma(t,M(t))^{-1}b(t,M(t))}^2\dd{t}\right)\right]^\frac{1}{4}\\
		\leq& C_R
	\end{align*}
	for a constant $C_R=C_R(d,p,q,p',q',d,T,\kappa,\norm{b}_{L^q_p(T)}).$ By Theorem \ref{Kr1}, one has
	$$\int_{0}^{T}f(t,M(t))\dd{s}\to0\text{ in probability}$$
	if $\norm{f}_{L^{q'}_{p'}(T)}\to0$. Together with the exponential bound from above, it follows
	$$\E\int_{0}^{T}f(t,X(t))\dd{t}\to0$$
	if $\norm{f}_{L^{q'}_{p'}(T)}\to0$. Consequently, the linear operator $A:L^{q'}_{p'}(T)\to\R$ given by
	$$f\mapsto\E\int_{0}^{T}f(t,X(t))\dd{t}$$
	is continuous, which provides the existence of the desired constant.
\end{proof}
\begin{Remark}
	The previous lemma is a version of Theorem 2.2 in \cite{Zhang2011} but with relaxed assumptions on $f$. However, the proof is based on the pathwise uniqueness, which has been proven in \cite{Zhang2011}.
\end{Remark}
\subsection{Existence}
From now on, we drop the assumption $V\equiv0$.
\begin{Lemma}
	Assume condition $\eqref{sigmac}$ and consider a global weak solution $(M,W)$ for the equation
	$$\dd{M}(t)=\sigma(t,M(t))\dd{W}(t).$$
	Then for any $T>0$ and $0\leq\alpha<(2d\kappa T)^{-1}$, it holds
	$$\E\exp\left(\alpha\sup_{0\leq t\leq T}\abs{M(t)}^2\right)\leq\frac{4}{\sqrt{1-2\alpha d\kappa T}}\E\exp\left(\frac{\alpha}{1-2\alpha d\kappa T}\abs{M(0)}^2\right)-3.$$
\end{Lemma}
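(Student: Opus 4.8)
The plan is to reduce the estimate to two ingredients: an explicit exponential--quadratic supermartingale controlling $\E\exp(\alpha|M(T)|^2)$, and Doob's $L^2$ maximal inequality, which passes from the terminal time to the running supremum.

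For the first ingredient, fix $\phi_0>0$ and let $\phi(t):=\phi_0/(1+2\kappa\phi_0t)$ be the solution of the Riccati equation $\phi'=-2\kappa\phi^2$ with $\phi(0)=\phi_0$. I claim that
$$Y_t:=(1+2\kappa\phi_0t)^{-d/2}\exp\!\big(\phi(t)\,|M(t)|^2\big)$$
is a nonnegative supermartingale. Applying It\^o's formula to $(t,x)\mapsto(1+2\kappa\phi_0t)^{-d/2}e^{\phi(t)|x|^2}$ along $\dd{M}(t)=\sigma(t,M(t))\dd{W}(t)$, the drift of $Y$ equals $Y_t$ times
$$-d\kappa\,\phi(t)+\phi'(t)|M(t)|^2+\phi(t)\operatorname{tr}\!\big(\sigma\sigma^\top\big)(t,M(t))+2\phi(t)^2\,M(t)^\top\!\big(\sigma\sigma^\top\big)(t,M(t))\,M(t),$$
which, by condition \eqref{sigmac} (so that $\operatorname{tr}(\sigma\sigma^\top)\le d\kappa$ and $v^\top\sigma\sigma^\top v\le\kappa|v|^2$), is at most $Y_t|M(t)|^2\big(\phi'(t)+2\kappa\phi(t)^2\big)=0$; nonnegativity then makes $Y$ a true supermartingale, after the standard localization by the exit times $\tau_n:=\inf\{t:|M(t)|\ge n\}$ (which $\uparrow\infty$ since $\sigma$ is bounded). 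Choosing $\phi_0:=\alpha/(1-2\alpha\kappa T)$ --- admissible because $2\alpha\kappa T\le2\alpha d\kappa T<1$ --- one gets $1+2\kappa\phi_0T=(1-2\alpha\kappa T)^{-1}$, hence $\phi(T)=\alpha$, and optional stopping at $T\wedge\tau_n$ followed by $n\to\infty$ yields
$$\E\exp\!\big(\alpha|M(T)|^2\big)\le(1-2\alpha\kappa T)^{-d/2}\,\E\exp\!\Big(\tfrac{\alpha}{1-2\alpha\kappa T}|M(0)|^2\Big)$$
(if the right-hand side is $+\infty$ there is nothing to prove).

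For the second ingredient, the same It\^o computation shows that $Z_t:=\exp(\tfrac12\alpha|M(t)|^2)$ is a nonnegative submartingale with $Z_t\ge1$, so Doob's $L^2$ maximal inequality --- in the form that retains the initial term, exploiting $Z_0\ge1$ --- gives
$$\E\exp\!\Big(\alpha\sup_{0\le t\le T}|M(t)|^2\Big)=\E\Big(\sup_{0\le t\le T}Z_t\Big)^2\le 4\,\E\exp\!\big(\alpha|M(T)|^2\big)-3,$$
again after localization by $\tau_n$ and passage to the limit. Combining the two displays and then relaxing the constants via Bernoulli's inequality --- $(1-2\alpha\kappa T)^{d}\ge1-2\alpha d\kappa T$, whence $(1-2\alpha\kappa T)^{-d/2}\le(1-2\alpha d\kappa T)^{-1/2}$, and $\tfrac{\alpha}{1-2\alpha\kappa T}\le\tfrac{\alpha}{1-2\alpha d\kappa T}$ together with monotonicity of $\exp$ --- produces exactly the asserted bound. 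The one genuinely nonroutine point is the construction in the first ingredient: one must guess the exponential--quadratic supermartingale and check that its drift is $\le0$ \emph{for all values of $|M(t)|$ at once}, which is precisely where the upper ellipticity bound $\sigma\sigma^\top\le\kappa I_d$ is used --- it forces the Riccati choice of $\phi$ (to cancel the $|M(t)|^2$-terms) and the exponent $-d/2$ of the prefactor (to absorb the $\operatorname{tr}(\sigma\sigma^\top)$-term). The submartingale property of $Z$, the localization, the (sharp form of the) $L^2$ maximal inequality, and the final elementary bookkeeping are all standard.
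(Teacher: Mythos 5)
Your first ingredient is correct, and it is a genuinely different route from the paper's: the Riccati supermartingale $Y_t=(1+2\kappa\phi_0t)^{-d/2}\exp(\phi(t)\abs{M(t)}^2)$ does have nonpositive drift under \eqref{sigmac}, and after localization it even gives the terminal-time bound with $2\alpha\kappa T$ in place of $2\alpha d\kappa T$, slightly better than the paper's coordinatewise estimate. The gap is in the second ingredient. The inequality you invoke --- for a nonnegative submartingale $Z$ with $Z_t\ge1$,
\[
\E\Big(\sup_{0\le t\le T}Z_t\Big)^2\le 4\,\E Z_T^2-3
\]
--- is not a form of Doob's $L^2$ maximal inequality, and it is false. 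Take $Z$ to be a Brownian motion started at $1+\varepsilon$ and stopped upon hitting $1$ or $2$: a continuous martingale with values in $[1,2]$. Then $\E Z_T^2=1+3\varepsilon$, while $\Prob(\sup_{t}Z_t\ge\lambda)=\varepsilon/(\lambda-1)$ for $\lambda\in(1+\varepsilon,2)$ gives
\[
\E\Big(\sup_{t}Z_t\Big)^2=1+4\varepsilon+2\varepsilon\ln(1/\varepsilon)-\varepsilon^2,
\]
which exceeds $4\,\E Z_T^2-3=1+12\varepsilon$ as soon as $\ln(1/\varepsilon)>4+\varepsilon/2$ (e.g.\ $\varepsilon=e^{-5}$). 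So no additive ``$-3$'' can be extracted from a Doob-type maximal inequality at this level of generality, and your argument as written only establishes the lemma with the ``$-3$'' deleted, i.e.\ $\E\exp\big(\alpha\sup_{0\le t\le T}\abs{M(t)}^2\big)\le\frac{4}{\sqrt{1-2\alpha d\kappa T}}\E\exp\big(\tfrac{\alpha}{1-2\alpha d\kappa T}\abs{M(0)}^2\big)$.

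In the paper the factor $4$ and the $-3$ do not come from an $L^2$ maximal inequality but from a tail comparison: after conditioning on $M(0)$ and time-changing each coordinate $N^i$ of the martingale part into a Brownian motion run up to time $\langle N^i\rangle(T)\le\kappa T$, the reflection principle gives $\Prob\big(\sup_{t\le\kappa T}\abs{W(t)}\ge\lambda\big)\le4\,\Prob\big(W(\kappa T)\ge\lambda\big)$, and writing $\E g=1+\int_{1}^{\infty}\Prob(g\ge x)\dd{x}$ for the exponential functional (which is $\ge1$) turns this into $\E\exp\big(\alpha d\sup(\cdot)^2\big)\le4\,\E\exp\big(\alpha d(\cdot_T)^2\big)-3$; the coordinates are then recombined via H\"older together with $\sqrt[d]{x_1\cdots x_d}+3\le\sqrt[d]{(x_1+3)\cdots(x_d+3)}$. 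If you want the statement exactly as claimed, keep your supermartingale bound for the terminal value but replace the Doob step by a tail/reflection argument of this kind; otherwise you have proved only the weaker version without the $-3$ (which, to be fair, is all that is used later, e.g.\ in Corollary \ref{exp1}).
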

\begin{proof}
	Let $M$ be a weak solution of the equation above. By conditioning on $M(0)$, it is sufficient to show the estimate for constant initial values $M(0)=\xi\in\R^d$. Now, define
	$$N(t):=\int_{0}^{t}\sigma(t,M(t))\dd{W}(t).$$
	From $\eqref{sigmac}$ it follows that for the quadratic variation of each coordinate $N^i, \ i=1,\dots,d$ of $N$
	$$\langle N^i\rangle(T)\leq T\kappa.$$
	By time-transformation and a simple computation using the reflection principle for Brownian motions, it follows for any $0\leq\alpha<(2d\kappa T)^{-1}$
	\begin{align*}
		&\E\exp\left(\alpha d\sup_{0\leq t\leq T}\left(\xi^i+ N^i(t)\right)^2\right)\\
		\leq&\E\exp\left(\alpha d\sup_{0\leq t\leq \kappa T}\left(\abs{\xi^i}+\abs{W(t)}\right)^2\right)\\
		=&1+\int_{1}^{\infty}\Prob\left[\exp\left(\alpha d\left(\abs{\xi^i}+\sup_{0\leq t\leq \kappa T}\abs{W(t)}\right)^2\right)\geq x\right]\dd{x}\\
		=&1+\int_{1}^{\infty}\Prob\left(\abs{\xi^i}+\sup_{0\leq t\leq \kappa T}\abs{W(t)}\geq\sqrt{\frac{\ln(x)}{\alpha d}}\right)\dd{x}\\
		\leq&1+4\int\limits_{1}^{\infty}\Prob\left(\abs{\xi^i}+W(\kappa T)\geq\sqrt{\frac{\ln(x)}{\alpha d}}\right)\dd{x}\\
		\leq&4\E\exp\left(\alpha d\left(\abs{\xi^i}+W(\kappa T)\right)^2\right)-3\\
		=&\frac{4}{\sqrt{1-2\alpha d\kappa T}}\exp\left(\frac{\alpha d}{1-2\alpha d\kappa T}\abs{\xi^i}^2\right)-3
	\end{align*}
	where $W$ is some one-dimensional Brownian motion. Finally, by H\"older's inequality, one obtains
	\begin{align*}
		&\E\exp\left(\alpha\sup_{0\leq t\leq T}\abs{M(t)}^2\right)\\
		\leq&\E\exp\left(\alpha\sum_{i=1}^{d}\sup_{0\leq t\leq T}\left(\xi^i+N^i(t)\right)^2\right)\\
		\leq&\sqrt[d]{\prod\limits_{i=1}^d\E\exp\left(\alpha d\sup_{0\leq t\leq T}\left(\xi^i+N^i(t)\right)^2\right)}\\
		\leq&\frac{4}{\sqrt{1-2\alpha d\kappa T}}\exp\left(\frac{\alpha}{1-2\alpha d\kappa T}\abs{\xi}^2\right)-3,
	\end{align*}
	where the following inequality was used
	$$\sqrt[d]{x_1\cdots x_d}+3\leq\sqrt[d]{(x_1+3)\cdots(x_d+3)} \ \forall x_1,\dots,x_d\geq0.$$
\end{proof}
\begin{Corollary}
	\label{exp1}
	Assume conditions \eqref{driftc}, \eqref{sigmac}, \eqref{dsigmac} and consider a solution of \eqref{deq} with $V\equiv0$. Then one has for any $T>0$ and $0\leq\alpha<(4d\kappa T)^{-1}$ the inequality
	$$\E\exp\left(\alpha\sup\limits_{-r\leq t\leq T}\abs{X(t)}^2\right)\leq\frac{C}{\sqrt[4]{1-4\alpha d\kappa T}}\E\exp\left(\frac{\alpha}{1-4\alpha d\kappa T}\norm{X_0}_\infty^2\right)$$
	for a constant $C=C(d,p,q,\kappa,\norm{b}_{L^q_p(T)})$.
\end{Corollary}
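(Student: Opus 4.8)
The plan is to eliminate the drift $b$ by Girsanov's transformation, to reduce to a deterministic initial segment, and then to invoke the preceding lemma. As in the proof of that lemma, conditioning on $\mathcal F_0$ shows that it is enough to treat a deterministic $\xi\in\Co$; this reduction is essential, because for deterministic $\xi$ the quantity $\E\exp\bigl(\frac{\alpha}{1-4\alpha d\kappa T}\norm{\xi}_\infty^2\bigr)$ is merely a number, which is exactly what will later let a square root applied to the right-hand side pass through without loss. Since $X(t)=\xi(t)$ on $[-r,0]$, we have $\sup_{-r\le t\le T}\abs{X(t)}^2=\max\bigl(\norm{\xi}_\infty^2,\sup_{0\le t\le T}\abs{X(t)}^2\bigr)$, hence
$$\E\exp\left(\alpha\sup_{-r\le t\le T}\abs{X(t)}^2\right)\le\exp\left(\alpha\norm{\xi}_\infty^2\right)+\E\exp\left(\alpha\sup_{0\le t\le T}\abs{X(t)}^2\right),$$
and since $0\le\alpha\le\frac{\alpha}{1-4\alpha d\kappa T}$ the first summand is already at most $(1-4\alpha d\kappa T)^{-1/4}\exp\bigl(\frac{\alpha}{1-4\alpha d\kappa T}\norm{\xi}_\infty^2\bigr)$; so everything reduces to bounding $\E\exp\bigl(\alpha\sup_{0\le t\le T}\abs{X(t)}^2\bigr)$.

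For this, let $M$ be the global strong solution of $\dd M(t)=\sigma(t,M(t))\dd W(t)$ with $M_0=\xi$. Exactly as in the proof of Lemma \ref{Kr3}, condition \eqref{sigmac} and $\frac dp+\frac2q<1$ imply that $t\mapsto\sigma^{-1}b(t,M(t))$ satisfies Novikov's condition, so with
$$Z:=\exp\left(\int_0^T\sigma^{-1}b(t,M(t))\cdot\dd W(t)-\tfrac12\int_0^T\abs{\sigma^{-1}b(t,M(t))}^2\dd t\right)$$
and $\dd{\mathds Q}:=Z\,\dd\Prob$, the triple $(M,\tilde W,\mathds Q)$ is a weak solution of $\dd Y(t)=b(t,Y(t))\dd t+\sigma(t,Y(t))\dd W(t)$ with $Y_0=\xi$. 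By uniqueness in distribution (Theorem 1.1 in \cite{Zhang2011}) the $\Prob$-law of $X$ equals the $\mathds Q$-law of $M$, so by the Cauchy--Schwarz inequality
\begin{align*}
	\E\exp\left(\alpha\sup_{0\le t\le T}\abs{X(t)}^2\right)&=\E\left[Z\exp\left(\alpha\sup_{0\le t\le T}\abs{M(t)}^2\right)\right]\\
	&\le\left(\E Z^2\right)^{1/2}\left(\E\exp\left(2\alpha\sup_{0\le t\le T}\abs{M(t)}^2\right)\right)^{1/2}.
\end{align*}

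It remains to bound the two factors. The hypothesis $\alpha<(4d\kappa T)^{-1}$ is precisely what makes $2\alpha<(2d\kappa T)^{-1}$, so the preceding lemma applies to $M$ with parameter $2\alpha$ and, using $\abs{M(0)}=\abs{\xi(0)}\le\norm{\xi}_\infty$, yields
$$\E\exp\left(2\alpha\sup_{0\le t\le T}\abs{M(t)}^2\right)\le\frac{4}{\sqrt{1-4\alpha d\kappa T}}\exp\left(\frac{2\alpha}{1-4\alpha d\kappa T}\norm{\xi}_\infty^2\right),$$
whose square root equals $\frac{2}{(1-4\alpha d\kappa T)^{1/4}}\exp\bigl(\frac{\alpha}{1-4\alpha d\kappa T}\norm{\xi}_\infty^2\bigr)$ (here the deterministic $\xi$ is used). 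For the other factor, note that $\abs{\sigma^{-1}b}^2\le\kappa\abs{b}^2\in L^{q/2}_{p/2}(T)$ with $\frac{d}{p/2}+\frac{2}{q/2}<2$, so the argument in the proof of Lemma \ref{Kr3} gives $\E\exp\bigl(6\int_0^T\abs{\sigma^{-1}b(t,M(t))}^2\dd t\bigr)<\infty$; in particular the process $\mathcal E_t:=\exp\bigl(3\int_0^t\sigma^{-1}b(s,M(s))\cdot\dd W(s)-\tfrac92\int_0^t\abs{\sigma^{-1}b(s,M(s))}^2\dd s\bigr)$ is a true martingale. Writing $Z^2=\mathcal E_T^{2/3}\exp\bigl(2\int_0^T\abs{\sigma^{-1}b(t,M(t))}^2\dd t\bigr)$ and applying H\"older's inequality with exponents $\tfrac32$ and $3$ together with $\E\mathcal E_T=1$ then bounds $\E Z^2$ by $\bigl(\E\exp(6\int_0^T\abs{\sigma^{-1}b(t,M(t))}^2\dd t)\bigr)^{1/3}$, a constant of the required type. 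Collecting the constants and recombining with the $\exp(\alpha\norm{\xi}_\infty^2)$-term from the first step gives the assertion with $C:=1+2(\E Z^2)^{1/2}$. The main obstacle is this last estimate: one must extract a \emph{finite constant that is independent of $\alpha$} from $\E Z^2$, while the factor-two margin in $\alpha<(4d\kappa T)^{-1}$ has to be spent entirely on the single Cauchy--Schwarz split --- any further splitting of the supremum term would overrun the budget $2\alpha d\kappa T<\tfrac12$ that the preceding lemma needs.
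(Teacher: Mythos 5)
Your proof is correct, and it reaches the stated bound with the right dependence of the constant; but it routes the Girsanov argument in the opposite direction from the paper. The paper's proof of Corollary \ref{exp1} removes the drift from the given solution $X$ itself: Novikov for $\sigma^{-1}b(t,X(t))$ comes from Lemma \ref{Kr3}, under the new measure $\mathds{Q}$ the process $X$ is drift-free, and then one Cauchy--Schwarz (spending the factor $2$ in $\alpha$, hence the threshold $(4d\kappa T)^{-1}$) plus a second Cauchy--Schwarz on the stochastic-exponential factor reduces everything to the preceding lemma and to $\E\exp\bigl(6\int_0^T\abs{\sigma^{-1}b}^2\dd{t}\bigr)<\infty$. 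You instead replay the mechanism of the proof of Lemma \ref{Kr3}: start from the drift-free solution $M$, add the drift by Girsanov, invoke uniqueness in law (Theorem 1.1 in \cite{Zhang2011}) to identify the $\Prob$-law of $X$ with the $\mathds{Q}$-law of $M$, and then apply Cauchy--Schwarz against the density $Z$ under $\Prob$, controlling $\E Z^2$ by the exponential-martingale factorization with H\"older exponents $3/2$ and $3$, which again lands on the same moment $\E\exp\bigl(6\int_0^T\abs{\sigma^{-1}b(t,M(t))}^2\dd{t}\bigr)$. The ingredients are therefore identical (drift-free exponential lemma at level $2\alpha$, Krylov/Khas'minskii exponential bound for $\abs{\sigma^{-1}b}^2\in L^{q/2}_{p/2}$ with constants independent of the initial value, which is what makes the conditioning step and the $\xi$-uniformity of $\E Z^2$ legitimate); what differs is that your route needs weak uniqueness as an extra input (harmless, since the paper already uses it in Lemma \ref{Kr3}), whereas the paper's route works directly on the given solution and so transfers more readily to localized or stopped settings. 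Your explicit splitting of the supremum over $[-r,T]$ into the initial segment and $[0,T]$, absorbed into the constant, is a small bookkeeping difference; the paper carries the supremum over $[-r,T]$ through the same chain of inequalities.
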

\begin{proof}
	By conditioning on $X_0$, it is sufficient to show the estimate for constant initial values $X_0=\xi\in\Co$. By condition \eqref{sigmac} and Lemma \ref{Kr3}, the Novikov condition
	$$\E\exp\left(\frac{1}{2}\int_{0}^{T}\abs{\sigma(t,X(t))^{-1}b(t,X(t))}^2\dd{t}\right)<\infty$$
	is fulfilled. Additionally, the bound is independent of the initial value $\xi$. Thus, under the probability measure
	\begin{align*}
		&\dd{\mathds{Q}}\\
		:=&\exp\left(-\int_{0}^{T}\sigma(t,X(t))^{-1}b(t,X(t))\cdot\dd{W}(t)-\frac{1}{2}\int_{0}^{T}\abs{\sigma(t,X(t))^{-1}b(t,X(t))}\dd{t}\right)\dd{\Prob},
	\end{align*}
	the process
	$$\tilde{W}(t):=W(t)+\int_{0}^{t}\sigma(s,X(s))^{-1}b(s,X(s))\dd{s}$$
	is a Brownian motion and $X$ solves the equation
		$$\dd{X}(t)=\sigma(t,X(t))\dd{\tilde{W}}(t)$$
	on $[0,T]$.	
	
	It follows
	\begin{align*}
		&\E_\Prob\exp\left(\alpha\sup\limits_{-r\leq t\leq T}\abs{X(t)}^2\right)\\
		=&\E_{\mathds{Q}}\bigg[\exp\left(\alpha\sup\limits_{-r\leq t\leq T}\abs{X(t)}^2\right)\cdot\exp\bigg(\int_{0}^{T}\sigma(t,X(t))^{-1}b(t,X(t))\cdot\dd{W}(t)\\
		&\hspace{6cm}+\frac{1}{2}\int_{0}^{T}\abs{\sigma(t,X(t))^{-1}b(t,X(t))}^2\dd{t}\bigg)\bigg]\\
		=&\E_{\mathds{Q}}\bigg[\exp\left(\alpha\sup\limits_{-r\leq t\leq T}\abs{X(t)}^2\right)\cdot\exp\bigg(\int_{0}^{T}\sigma(t,X(t))^{-1}b(t,X(t))\cdot\dd{\tilde{W}}(t)\\
		&\hspace{6cm}-\frac{1}{2}\int_{0}^{T}\abs{\sigma(t,X(t))^{-1}b(t,X(t))}^2\dd{t}\bigg)\bigg]\\
		\leq&\left[\E_{\mathds{Q}}\exp\left(2\alpha\sup\limits_{-r\leq t\leq 	T}\abs{X(t)}^2\right)\right]^{\frac{1}{2}}
		\cdot\bigg[\E_{\mathds{Q}}\exp\bigg(2\int_{0}^{T}\sigma(t,X(t))^{-1}b(t,X(t))\cdot\dd{\tilde{W}}(t)\\
		&\hspace{180pt}-\int_{0}^{T}\abs{\sigma(t,X(t))^{-1}b(t,X(t))}^2\dd{t}\bigg)\bigg]^\frac{1}{2}\\
		\leq&\left[\E_{\mathds{Q}}\exp\left(2\alpha\sup\limits_{-r\leq t\leq T}\abs{X(t)}^2\right)\right]^\frac{1}{2}\cdot\left[\E_{\mathds{Q}}\exp\left(6\int_{0}^{T}\abs{\sigma(t,X(t))^{-1}b(t,X(t))}^2\dd{t}\right)\right]^{\frac{1}{4}}\\
		\leq&\frac{C}{\sqrt[4]{1-4\alpha d\kappa T}}\exp\left(\frac{\alpha}{1-4\alpha d\kappa T}\norm{\xi}_\infty^2\right)	
	\end{align*}
	for a constant $C=C(d,p,q,\kappa,\norm{b}_{L^q_p(T)})$ because of condition \eqref{sigmac}, Lemmas \ref{Kr3} and \ref{exp1}.
\end{proof}
\begin{Theorem}
	\label{Existence}
	Assume conditions \eqref{driftc}, \eqref{sigmac}, \eqref{dsigmac} and \eqref{delayc}. Then for any initial distribution $\mu\in\mathcal{P}(\Co)$, there exists a global weak solution $(X,W)$ of \eqref{deq} with $X_0\sim\mu$.
\end{Theorem}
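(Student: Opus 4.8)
The plan is to obtain a weak solution of \eqref{deq} from a weak solution of the drift-reduced (non-delay) equation by adding the functional drift $V$ through a Girsanov change of measure. Since the resulting exponential density is only integrable when the initial segment is not too large, I would first reduce to deterministic initial data: by conditioning on $X_0$ it suffices to construct, for every fixed $\xi\in\Co$, a global weak solution with $X_0=\xi$, and the general case then follows by mixing the resulting laws against $\mu$ (the construction depends measurably on $\xi$ because the non-delay equation is well posed in law by \cite{Zhang2011} and the Girsanov density is a measurable path functional).

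So fix $\xi\in\Co$. By Theorem~1.1 in \cite{Zhang2011}, which applies under \eqref{driftc}, \eqref{sigmac} and \eqref{dsigmac}, there is a global weak solution $(M,W)$ of
$$\dd{M}(t)=b(t,M(t))\dd{t}+\sigma(t,M(t))\dd{W}(t),\qquad M_0=\xi,$$
that is, a solution of \eqref{deq} with $V\equiv0$, so Corollary~\ref{exp1} is available for $M$. Define the adapted process $u(t):=\sigma(t,M(t))^{-1}V(t,M_t)$; by \eqref{sigmac} and \eqref{delayc} it satisfies $\abs{u(t)}\leq c\,g\!\left(\sup_{-r\leq s\leq t}\abs{M(s)}\right)$ for a constant $c=c(d,\kappa)$.

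The key step is to verify Novikov's condition $\E_\Prob\exp\!\left(\tfrac12\int_0^T\abs{u(s)}^2\dd{s}\right)<\infty$ for every $T>0$. From $\limsup_{x\to\infty}g(x)/x=0$ one gets, for each $\varepsilon>0$, a constant $C_\varepsilon$ with $g(x)^2\leq\varepsilon x^2+C_\varepsilon$, whence
$$\tfrac12\int_0^T\abs{u(s)}^2\dd{s}\ \leq\ \tfrac{c^2T}{2}\Big(\varepsilon\sup_{-r\leq s\leq T}\abs{M(s)}^2+C_\varepsilon\Big).$$
Choosing $\varepsilon$ so small that $\tfrac{c^2T\varepsilon}{2}<(4d\kappa T)^{-1}$ and applying Corollary~\ref{exp1} — whose right-hand side is finite because $M_0=\xi$ is deterministic — gives the claim. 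Girsanov's theorem then provides, for each $T$, a probability measure $\mathds{Q}^T$ on $\mathcal{F}_T$ with density $\exp\!\left(\int_0^Tu(s)\cdot\dd{W}(s)-\tfrac12\int_0^T\abs{u(s)}^2\dd{s}\right)$ with respect to $\Prob$, under which $\tilde{W}(t):=W(t)-\int_0^tu(s)\dd{s}$ is a Brownian motion, and since $\sigma(t,M(t))u(t)=V(t,M_t)$,
$$\dd{M}(t)=b(t,M(t))\dd{t}+V(t,M_t)\dd{t}+\sigma(t,M(t))\dd{\tilde{W}}(t)\qquad\text{on }[0,T].$$
The densities form a martingale in $T$, so the family $(\mathds{Q}^T)_{T>0}$ is consistent and extends, on the canonical path space, to a measure $\mathds{Q}$ on $\mathcal{F}_\infty$ under which $\tilde{W}$ is a global Brownian motion and $(M,\tilde{W})$ a global weak solution of \eqref{deq} with $M_0=\xi$; mixing these laws over $\xi\sim\mu$ completes the construction. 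The main obstacle is the Novikov estimate, which is exactly where the sublinearity \eqref{delayc} together with the Gaussian-type bound of Corollary~\ref{exp1} are indispensable; the reduction to deterministic initial data (forced because that bound degenerates for heavy-tailed $\mu$) and the consistency argument gluing the $\mathds{Q}^T$ are the remaining technical points.
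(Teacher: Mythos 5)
Your construction is correct and is in essence the paper's: both start from Zhang's solution of the drift-only equation, verify Novikov's condition by combining the sublinearity \eqref{delayc} (via $g(x)^2\leq\varepsilon x^2+C_\varepsilon$) with the exponential bound of Corollary \ref{exp1}, add the functional drift $V$ through Girsanov on each $[0,T]$, and glue the consistent family $(\mathds{Q}_T)_{T>0}$ into a global measure. The one genuine divergence is the treatment of a general initial law $\mu$: you reduce to deterministic initial segments $\xi\in\Co$ and then mix the laws $\mathds{Q}^\xi$ over $\xi\sim\mu$ on canonical path space, which requires $\xi\mapsto\mathds{Q}^\xi$ to be a measurable kernel — a standard but nontrivial point you only assert (it can be justified through pathwise uniqueness and the Yamada--Watanabe solution functional together with measurability of the Girsanov density, so this is a fillable gap rather than a fatal one). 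The paper sidesteps measurable dependence on the initial condition altogether by a coarser decomposition: it first treats $\mu$ with bounded support (so Corollary \ref{exp1} applies directly), then partitions $\Co$ into countably many bounded sets $A_n$, keeps the single strong solution $X$ with $X_0\sim\mu$ on one fixed space, performs the Girsanov change under each conditioned law $\mu(\cdot\,|\,A_n)$, and pastes the countably many measures and Brownian motions together, checking via the conditional-expectation identity and L\'evy's characterization that the pasted process $\hat W$ is again a Brownian motion. Your route buys a cleaner statement (every deterministic initial segment handled uniformly) at the price of the kernel-measurability argument; the paper's countable-partition device buys freedom from any selection/measurability issue at the price of the explicit verification for $\hat W$.
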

\begin{proof}
	Due to Theorem 1.1 in \cite{Zhang2011}, there exists a global strong solution $X$ of the equation
	\begin{align*}
		\dd{X}(t)&=b(t,X(t))\dd{t}+\sigma(t,X(t))\dd{W}(t),\\
		X_0&\sim\mu
	\end{align*}
	on the filtrated probability space $\left(\Omega,\mathcal{F},\Prob,\left(\mathcal{F}_t\right)_{t\geq0}\right)$ which is considered in this paper. Without loss of generality, one can assume that $\mathcal{F}$ is generated by the filtration $(\mathcal{F}_t)_{t\geq0}$ which is the augmented filtration generated by $X_0$ and $W$.\\
	Firstly, assume that $\mu$ has bounded support. For $T>0$, by Corollary \ref{exp1},
	\begin{align*}
	&\dd{\mathds{Q}_T}\\
	:=&\exp\left(\int_{0}^{T}\sigma(t,X(t))^{-1}V(t,X_t)\cdot\dd{W}(t)-\frac{1}{2}\int_{0}^{T}\abs{\sigma(t,X(t))^{-1}V(t,X_t)}^2\dd{t}\right)\dd{\Prob}
	\end{align*}
	is a probability measure. Under $\mathds{Q}_T$, the process
	$$\tilde{W}:=W(t)-\int_{0}^{t}\sigma(s,X(s))^{-1}V(s,X_s)\dd{s}, \ t\geq0$$
	is a Brownian motion on $[0,T]$ and $X$ is a weak solution of
	\begin{align*}
		\dd{X}(t)&=V(t,X_t)\dd{t}+b(t,X(t))\dd{t}+\sigma(t,X(t))\dd{\tilde{W}}(t)
	\end{align*}
	on $[0,T]$. Additionally, it holds for any $0<T_1<T_2$
	$$\mathds{Q}_{T_1}(A)=\mathds{Q}_{T_2}(A) \ \forall A\in\mathcal{F}_{T_1}.$$
	Now, let $\mathds{Q}$ be the probability measure uniquely defined by
	$$\mathds{Q}(A):=\mathds{Q}_T(A), \ T>0,A\in\mathcal{F}_T.$$
	Then $(X,\tilde{W},\mathds{Q})$ is a global weak solution. In the general case, choose a sequence of bounded, disjoint, measurable subsets $(A_n)_{n\in I}\subset\Co$ with
	$$\mu(A_n)>0 \ \forall n\in I$$
	where $I$ is some countable index set. Now, define the probability measures
	$$\mu^n(\cdot):=\mu\left(\cdot\big|A_n\right).$$
	By the discussion from above, there exists for each $n\in I$ a probability measure $\Prob^n$ and a Brownian motion $W^n$ such that $(X,W^n,\Prob^n)$ is a global weak solution with initial distribution $\mu^n$. Now, define the probability measure
	$$\hat{\Prob}(A):=\sum\limits_{n\in I}\Prob^n(A)\mu(A_n), \ A\in\mathcal{F}$$
	and the process
	$$\hat{W}(\omega,t):=W^n(\omega,t)\text{ if }X_0(\omega)\in A_n.$$
	Now, let $f:\Omega\to\R$ be measurable with finite moment with respect to $\hat{\Prob}$, $s\in\R_{\geq0}$ and $A\in\mathcal{F}_s$. Then one has
	\begin{align*}
		&\E_{\hat{\Prob}}\left(\1_Af\right)\\
		=&\sum_{n\in I}\mu(A_n)\E_{\Prob^n}(\1_Af)\\
		=&\sum_{n\in I}\mu(A_n)\E_{\Prob^n}\left(\1_A\E_{\Prob^n}\left(f|\mathcal{F}_s\right)\right)\\
		=&\sum_{n\in I}\mu(A_n)\E_{\Prob^n}\left(\1_{X_0\in A_n}\1_A\E_{\Prob^n}\left(f|\mathcal{F}_s\right)\right)\\
		=&\sum_{n\in I}\E_{\hat{\Prob}}\left(\1_{X_0\in A_n}\1_A\E_{\Prob^n}\left(f|\mathcal{F}_s\right)\right).
	\end{align*}
	It follows
	$$\E_{\hat{\Prob}}\left(f|\mathcal{F}_s\right)=\sum_{n\in I}\1_{X_0\in A_n}\E_{\Prob^n}\left(f|\mathcal{F}_s\right).$$
	Since each $W^n$ is a Brownian motion and a martingale under $\Prob^n$, one has for $0\leq s\leq t$
	\begin{align*}
		\E_{\hat{\Prob}}\left(\hat{W}(t)\Big|\mathcal{F}_s\right)&=\sum_{n\in I}\1_{X_0\in A_n}\E_{\Prob^n}\left(\hat{W}(t)\Big|\mathcal{F}_s\right)\\
		&=\sum_{n,m\in I}\1_{X_0\in A_n}\E_{\Prob^n}\left(\1_{X_0\in A_m}W^m(t)\big|\mathcal{F}_s\right)\\
		&=\sum_{n,m\in I}\1_{X_0\in A_n}\1_{X_0\in A_m}\E_{\Prob^n}\left(W^m(t)\big|\mathcal{F}_s\right)\\
		&=\sum_{n\in I}\1_{X_0\in A_n}\E_{\Prob^n}\left(W^n(t)\big|\mathcal{F}_s\right)\\
		&=\sum_{n\in I}\1_{X_0\in A_n}W^n(s)\\
		&=\hat{W}(s)
	\end{align*}
	and analogously
	$$\E\left(\hat{W}^i(t)\hat{W}^j(t)\Big|\mathcal{F}_s\right)=\delta_{ij}(t-s)$$
	where $i,j=1,\dots d$. Additionally, the process $\hat{W}$ is almost surely continuous and by Levy's characterization, $\hat{W}$ is a Brownian motion on $\left(\Omega,\mathcal{F},\hat{\Prob},\left(\mathcal{F}_t\right)_{t\geq0}\right)$. Hence, $(X,\hat{W},\hat{\Prob})$ is a weak solution with initial distribution $X_0\sim\mu$.
\end{proof}
\subsection{Exponential- and Krylov-Type Estimates for the General Case}
In this section we show similar estimates like above for solutions with delay drift.
\begin{Lemma}
	\label{exp2}
	Assume conditions \eqref{driftc}, \eqref{sigmac}, \eqref{dsigmac}, \eqref{delayc} and consider a local solution $X\in\mathcal{S}^\tau(\xi)$ for some $\mathcal{F}_0$-measurable, $\Co$-valued random variable $\xi$ and a stopping time $\tau$. Then one has for any $T>0$ and $0\leq\alpha<(8d\kappa T)^{-1}$ the inequality
	$$\left[\E\exp\left(\alpha\sup\limits_{-r\leq t\leq T\wedge\tau}\abs{X(t)}^2\right)\right]^2\leq \frac{C}{\sqrt[4]{1-8\alpha d\kappa T}}\E\exp\left(\frac{2\alpha}{1-8\alpha d\kappa T}\norm{\xi}_\infty^2\right)$$
	for some constant $C=C(d,p,q,T,\kappa,\norm{b}_{L^q_p(T)},g,\xi)$.
\end{Lemma}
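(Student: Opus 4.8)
The plan is to remove the delay drift $V$ by a Girsanov transformation, apply the $V\equiv0$ estimate of Corollary~\ref{exp1} under the new measure, and then transfer the resulting exponential bound back to $\Prob$. Exactly as in Corollary~\ref{exp1}, conditioning on $\mathcal{F}_0$ reduces the claim to a deterministic initial value $\xi=x_0\in\Co$, the dependence of the final constant on $\xi$ then accounting for the general case. The genuine obstacle is a circularity: to turn $\mathcal{E}\!\left(-\int_0^\cdot\sigma^{-1}(s,X(s))V(s,X_s)\cdot\dd{W}(s)\right)$ into a true martingale one would like a Novikov bound $\E_\Prob\exp\!\big(c\int_0^T\abs{\sigma^{-1}V(s,X_s)}^2\dd{s}\big)<\infty$, which, since $\abs{V(s,X_s)}\le g(\sup_{-r\le u\le s}\abs{X(u)})$ by condition~\eqref{delayc}, is essentially the exponential moment we are trying to prove. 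I would circumvent this by localization: set
$$\tau_n:=\tau\wedge\inf\{t\ge0:\abs{X(t)}\ge n\},$$
so that on $[0,\tau_n]$ the integrand $\sigma^{-1}(s,X(s))V(s,X_s)$ is bounded (by condition~\eqref{sigmac}, condition~\eqref{delayc}, the bound $\abs{X(u)}\le n$ for $u\le\tau_n$, and $\norm{x_0}_\infty$). Since $X$ has almost surely continuous paths, $\tau_n\uparrow\tau$, and since the left-hand side of the asserted inequality is increasing in $n$, by monotone convergence it suffices to bound $\E_\Prob\exp\!\big(\alpha\sup_{-r\le t\le T\wedge\tau_n}\abs{X(t)}^2\big)$ uniformly in $n$.

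With $n$ fixed, the stopped drift is bounded, so the Dol\'eans--Dade exponential of $-\int_0^{\cdot\wedge\tau_n}\sigma^{-1}(s,X(s))V(s,X_s)\cdot\dd{W}(s)$ is a genuine martingale and defines a probability measure $\mathds{Q}_n$ under which $\tilde{W}(t):=W(t)+\int_0^{t\wedge\tau_n}\sigma^{-1}(s,X(s))V(s,X_s)\dd{s}$ is a Brownian motion and $X$ solves $\dd{X}(t)=b(t,X(t))\dd{t}+\sigma(t,X(t))\dd{\tilde{W}}(t)$ on $[0,T\wedge\tau_n]$; extending this to a global $V\equiv0$ solution with initial value $x_0$ makes Corollary~\ref{exp1} applicable under $\mathds{Q}_n$. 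Expressing the change of measure back to $\Prob$ through $\tilde{W}$ (so that the relevant density becomes a $\mathds{Q}_n$-martingale) and applying the Cauchy--Schwarz inequality twice, exactly as in the proof of Corollary~\ref{exp1}, one obtains
$$\E_\Prob\exp\!\Big(\alpha\!\!\sup_{-r\le t\le T\wedge\tau_n}\!\!\abs{X(t)}^2\Big)\le\Big[\E_{\mathds{Q}_n}\exp\!\Big(2\alpha\!\!\sup_{-r\le t\le T\wedge\tau_n}\!\!\abs{X(t)}^2\Big)\Big]^{1/2}\Big[\E_{\mathds{Q}_n}\exp\!\Big(6\!\int_0^{T\wedge\tau_n}\!\!\abs{\sigma^{-1}V(s,X_s)}^2\dd{s}\Big)\Big]^{1/4},$$
the discarded stochastic exponential being a $\mathds{Q}_n$-martingale of expectation $1$.

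It remains to estimate the two factors, where under $\mathds{Q}_n$ the process $X$ is a $V\equiv0$ solution. The first factor is controlled by Corollary~\ref{exp1} with exponent $2\alpha$, giving $\frac{C}{\sqrt[4]{1-8\alpha d\kappa T}}\exp\!\big(\frac{2\alpha}{1-8\alpha d\kappa T}\norm{x_0}_\infty^2\big)$, which is finite precisely because $\alpha<(8d\kappa T)^{-1}$ --- this is where the range of $\alpha$ in the statement originates. For the second factor I would use $\abs{\sigma^{-1}V(s,X_s)}^2\le\kappa\,g\big(\sup_{-r\le u\le s}\abs{X(u)}\big)^2$ together with the elementary consequence of condition~\eqref{delayc} that for every $\varepsilon>0$ there is a constant $C_\varepsilon=C_\varepsilon(\varepsilon,g)$ with $g(x)^2\le\varepsilon x^2+C_\varepsilon$ for all $x\ge0$ (since $g$ is monotone with $g(x)/x\to0$); this yields
$$6\int_0^{T\wedge\tau_n}\abs{\sigma^{-1}V(s,X_s)}^2\dd{s}\le 6\kappa T C_\varepsilon+6\kappa T\varepsilon\sup_{-r\le u\le T\wedge\tau_n}\abs{X(u)}^2.$$
Choosing $\varepsilon$ so small that $6\kappa T\varepsilon<(4d\kappa T)^{-1}$ and applying Corollary~\ref{exp1} once more, with the now fixed exponent $6\kappa T\varepsilon$, bounds the second factor by a constant depending only on $d,p,q,T,\kappa,\norm{b}_{L^q_p(T)},g$ and $\norm{x_0}_\infty$. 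Multiplying the two estimates, squaring, and letting $n\to\infty$ by monotone convergence gives the asserted inequality. The only real difficulty is the circularity noted above; once the localization is in place the rest is a weighted rerun of Corollary~\ref{exp1}, the single delicate point being the bookkeeping of exponents, which forces exactly the threshold $\alpha<(8d\kappa T)^{-1}$ and makes the two invocations of Corollary~\ref{exp1} fit together.
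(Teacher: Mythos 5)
Your proposal is correct and follows essentially the same route as the paper: localize so that the Girsanov density with the delay drift is trivially a martingale, extend the stopped process to a global solution of the non-delay equation under the new measure, apply Cauchy--Schwarz twice, and then invoke Corollary \ref{exp1} once with exponent $2\alpha$ and once with a small exponent obtained from the sublinearity $g(x)^2\leq\varepsilon x^2+C_\varepsilon$ of condition \eqref{delayc}. The only differences are cosmetic (you stop when $\abs{X(t)}\geq n$ while the paper stops when $\abs{V(t,X_t)}\geq n$, and you condition on $\mathcal{F}_0$ first), neither of which changes the argument.
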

\begin{proof}
	Introduce the stopping times
	$$\tau_n:=\inf\left\{t\geq0:\abs{V(t,X_t)}\geq n\right\}\wedge\tau\wedge T.$$
	By the monotone convergence theorem, it suffices to show the inequality for the stopped processes $X^{\tau_n}$ with a uniform bound. The following technique is similar to the one used in \cite[p. 286-297]{liptser2001statistics}. For every $n\in\N$, there exists a strong solution of the SDE
	\begin{align*}
		\dd{Y^n}(t)&=b\left(t,Y^n(t)\right)\dd{t}+\sigma\left(t,Y^n(t)\right)\dd{W}(t), \ t\geq\tau_n,\\
		Y(\tau_n)&=X(\tau_n).
	\end{align*}
	Now, define processes $X^n$ by
	$$
	X^n(t):=
	\begin{cases}
		X(t), & t\leq\tau_n,\\
		Y^n(t), & t>\tau_n.
	\end{cases}
	$$
	Under the probability measure
	\begin{align*}
		&\dd{\mathds{Q}^n}\\
		:=&\exp\left(-\int_{0}^{\tau_n}\sigma(t,X^n(t))^{-1}V(t,X^n_t)\cdot\dd{W}(t)-\frac{1}{2}\int_{0}^{\tau_n}\abs{\sigma(t,X^n(t))^{-1}V(t,X^n_t)}^2\dd{t}\right)\dd{\Prob},
	\end{align*}
	the process
	$$W^n(t):=W(t)+\int_{0}^{t\wedge\tau_n}\sigma(s,X^n(s))^{-1}V(s,X^n_s)\dd{s}$$
	is a Brownian motion and $X^n$ is the unique strong solution of the equation
	\begin{align*}
		\dd{X^n}(t)&=b(t,X^n(t))\dd{t}+\sigma(t,X^n(t))\dd{W^n}(t),\\
		X^n_0&=X_0.
	\end{align*}
	Accordingly,
	\begin{align*}
		&\left[\E_\Prob\exp\left(\alpha\sup\limits_{-r\leq t\leq T}\abs{X^n(t)}^2\right)\right]^2\\
		=&\bigg[\E_{\mathds{Q}^n}\bigg(\exp\left\{\alpha\sup\limits_{-r\leq t\leq T}\abs{X^n(t)}^2\right\}\cdot\exp\bigg\{\int_{0}^{\tau_n}\sigma(t,X^n(t))^{-1}V(t,X^n_t)\cdot\dd{W}(t)\\
		&\hspace{170pt}+\frac{1}{2}\int_{0}^{\tau_n}\abs{\sigma(t,X^n(t))^{-1}V(t,X^n_t)}^2\dd{t}\bigg\}\bigg)\bigg]^2\\
		=&\bigg[\E_{\mathds{Q}^n}\bigg(\exp\left\{\alpha\sup\limits_{-r\leq t\leq T}\abs{X^n(t)}^2\right\}\cdot\exp\bigg\{\int_{0}^{\tau_n}\sigma(t,X^n(t))^{-1}V(t,X^n_t)\cdot\dd{W^n}(t)\\
		&\hspace{170pt}-\frac{1}{2}\int_{0}^{\tau_n}\abs{\sigma(t,X^n(t))^{-1}V(t,X^n_t)}^2\dd{t}\bigg\}\bigg)\bigg]^2\\
		\leq&\E_{\mathds{Q}^n}\exp\left(2\alpha\sup\limits_{-r\leq t\leq T}\abs{X^n(t)}^2\right)
		\cdot\E_{\mathds{Q}^n}\exp\bigg(2\int_{0}^{\tau_n}\sigma(t,X^n(t))^{-1}V(t,X^n_t)\cdot\dd{W^n}(t)\\
		&\hspace{190pt}-\int_{0}^{\tau_n}\abs{\sigma(t,X^n(t))^{-1}V(t,X^n_t)}^2\dd{t}\bigg)\\
		\leq&\E_{\mathds{Q}^n}\exp\left(2\alpha\sup\limits_{-r\leq t\leq T}\abs{X^n(t)}^2\right)\cdot\left[\E_{\mathds{Q}^n}\exp\left(6\int_{0}^{T}\abs{\sigma(t,X^n(t))^{-1}V(t,X^n_t)}^2\dd{t}\right)\right]^{\frac{1}{2}}\\
		\leq&\frac{C}{\sqrt[4]{1-8\alpha d\kappa T}}\E\exp\left(\frac{2\alpha}{1-8\alpha d\kappa T}\norm{\xi}_\infty^2\right)
	\end{align*}
	for  a constant $C=C(d,p,q,T,\kappa,\norm{b}_{L^q_p(T)},g,\xi)$ due to condition \eqref{delayc} and Corollary \ref{exp1}.
\end{proof}
\begin{Lemma}
	\label{Kr4}
	Assume conditions \eqref{driftc}, \eqref{sigmac}, \eqref{dsigmac}, \eqref{delayc} and let $X\in\mathcal{S}^\tau(\xi)$ for some $\mathcal{F}_0$-measurable, $\Co$-valued random variable $\xi$ and a stopping time $\tau$ such that
	$$\E\exp\left(\varepsilon\norm{\xi}_\infty^2\right)<\infty$$
	for some $\varepsilon>0$. Let $T>0$ and $p',q'\in(0,\infty)$ be given with
	\begin{align*}
	\frac{d}{p'}+\frac{2}{q'}<2.
	\end{align*}
	Then there exists for every $R\geq0$ a constant $C_R=C_R(\xi,p,q,p',q',d,T,\kappa,\norm{b}_{L^q_p(T)},g)$ such that
	$$\E\exp\left(\int_{0}^{T\wedge\tau}f(s,X(s))\dd{s}\right)\leq C_R$$
	for all $f\in L^{q'}_{p'}(T)$ and $\norm{f}_{L^{q'}_{p'}(T)}\leq R$. Additionally,  one has
	$$\E\int_{0}^{T\wedge\tau}f(s,X(s))\dd{s}\leq C\norm{f}_{L^{q'}_{p'}(T)}$$
	with a constant $C>0$.
\end{Lemma}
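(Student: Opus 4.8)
The plan is to remove the functional drift $V$ by Girsanov's transformation and reduce everything to the non-delay estimates of Lemma~\ref{Kr3} and Corollary~\ref{exp1}, exactly along the lines of the proof of Lemma~\ref{exp2}. Since $f\le\abs{f}$ pointwise and $\norm{\abs{f}}_{L^{q'}_{p'}(T)}=\norm{f}_{L^{q'}_{p'}(T)}$, I may assume $f\ge0$ throughout. First I introduce the stopping times $\tau_n:=\inf\{t\ge0:\abs{V(t,X_t)}\ge n\}\wedge\tau\wedge T$; condition~\eqref{delayc} gives $\abs{V(t,X_t)}\le g(\norm{X_t}_\infty)\le g\big(\sup_{-r\le s\le T\wedge\tau}\abs{X(s)}\big)<\infty$ almost surely, so $\tau_n=T\wedge\tau$ for all large $n$, and by monotone convergence it is enough to bound $\E\exp\big(\int_0^{\tau_n}f(s,X(s))\dd{s}\big)$ uniformly in $n$. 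As in the proof of Lemma~\ref{exp2}, I let $X^n$ agree with $X$ on $[0,\tau_n]$ and, under the Girsanov measure $\mathds{Q}^n$ with associated $\mathds{Q}^n$-Brownian motion $W^n$, solve the non-delay equation $\dd{X^n}(t)=b(t,X^n(t))\dd{t}+\sigma(t,X^n(t))\dd{W^n}(t)$ with $X^n_0=\xi$; thus $X^n\in\mathcal{S}^T(\xi)$ with respect to $(\mathds{Q}^n,W^n)$, and since $\mathds{P}$ and $\mathds{Q}^n$ coincide on $\mathcal{F}_0$ one still has $\E_{\mathds{Q}^n}\exp(\varepsilon\norm{\xi}_\infty^2)=\E_{\mathds{P}}\exp(\varepsilon\norm{\xi}_\infty^2)<\infty$.

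Write $g_n(s):=\sigma(s,X^n(s))^{-1}V(s,X^n_s)$, which is bounded by $n\sqrt{\kappa}$ on $[0,\tau_n]$ by conditions~\eqref{sigmac} and~\eqref{delayc}. Using $X=X^n$ on $[0,\tau_n]$ together with $\tfrac{\dd{\mathds{P}}}{\dd{\mathds{Q}^n}}=\exp\big(\int_0^{\tau_n}g_n\cdot\dd{W^n}-\tfrac{1}{2}\int_0^{\tau_n}\abs{g_n}^2\dd{s}\big)$, the change of measure gives
\[
\E_{\mathds{P}}\exp\left(\int_0^{\tau_n}f(s,X(s))\dd{s}\right)=\E_{\mathds{Q}^n}\left[\exp\left(\int_0^{\tau_n}f(s,X^n(s))\dd{s}\right)\exp\left(\int_0^{\tau_n}g_n\cdot\dd{W^n}-\frac{1}{2}\int_0^{\tau_n}\abs{g_n}^2\dd{s}\right)\right].
\]
I then split the density as the product of $\exp\big(\int_0^{\tau_n}g_n\cdot\dd{W^n}-2\int_0^{\tau_n}\abs{g_n}^2\dd{s}\big)$ — whose fourth power is a stochastic exponential of $\mathds{Q}^n$-expectation $1$, the integrand $g_n$ being bounded on $[0,\tau_n]$ — and $\exp\big(\tfrac{3}{2}\int_0^{\tau_n}\abs{g_n}^2\dd{s}\big)$, and apply Hölder's inequality with exponents $(2,4,4)$. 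The middle factor thereby contributes at most $1$; the first factor $\big[\E_{\mathds{Q}^n}\exp\big(2\int_0^{T}f(s,X^n(s))\dd{s}\big)\big]^{1/2}$ is at most $C_{2R}^{1/2}$ by Lemma~\ref{Kr3} applied to $X^n\in\mathcal{S}^T(\xi)$ and the function $2f$ (note $\norm{2f}_{L^{q'}_{p'}(T)}\le2R$), the constant $C_{2R}$ depending neither on $n$ nor on $\xi$.

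It remains to bound the third factor $\big[\E_{\mathds{Q}^n}\exp\big(6\int_0^{\tau_n}\abs{g_n}^2\dd{s}\big)\big]^{1/4}$ uniformly in $n$, and this is where the sublinearity of $V$ enters: conditions~\eqref{sigmac} and~\eqref{delayc} give $6\int_0^{\tau_n}\abs{g_n}^2\dd{s}\le6\kappa T\,g\big(\sup_{-r\le t\le T}\abs{X^n(t)}\big)^2$, while $\limsup_{r\to\infty}g(r)/r=0$ furnishes, for each $\eta>0$, a constant $C_\eta$ with $g(r)^2\le\eta r^2+C_\eta$. Choosing $\eta$ small enough that $\beta:=6\kappa T\eta<(4d\kappa T)^{-1}$ and $\beta/(1-4\beta d\kappa T)\le\varepsilon$, Corollary~\ref{exp1} — applicable because $X^n$ solves \eqref{deq} with $V\equiv0$ under $\mathds{Q}^n$ — bounds this factor uniformly in $n$ in terms of $\E\exp\big(\tfrac{\beta}{1-4\beta d\kappa T}\norm{\xi}_\infty^2\big)<\infty$. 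Multiplying the three estimates yields the desired $n$-uniform bound, and letting $n\to\infty$ proves the exponential inequality. The linear estimate follows at once: for $f\not\equiv0$, apply the exponential bound with $R=1$ to $\lambda f$, where $\lambda:=1/\norm{f}_{L^{q'}_{p'}(T)}$, and use $e^x\ge1+x$ to obtain $\E\int_0^{T\wedge\tau}f(s,X(s))\dd{s}\le C_1\norm{f}_{L^{q'}_{p'}(T)}$.

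The step I expect to be the main obstacle is keeping every constant uniform in the truncation index $n$. Girsanov's density is a genuine martingale only after the truncation at $\tau_n$, but its square carries an $n$-dependent exponential factor of order $e^{\kappa n^2 T}$, so a naive Hölder split would fail; the point of the factorization above is that this bad factor is absorbed into a stochastic exponential, whose expectation is at most $1$ no matter how large $n$ is, leaving only the drift-energy term $\exp\big(\tfrac{3}{2}\int_0^{\tau_n}\abs{g_n}^2\dd{s}\big)$, which is controlled $n$-uniformly through the sublinearity of $g$ and the already-available exponential moment bound of Corollary~\ref{exp1}. A minor side point is the verification that $\mathds{P}$ and $\mathds{Q}^n$ agree on $\mathcal{F}_0$ — true because the density is a martingale started at time $0$ — which is what allows the hypothesis $\E\exp(\varepsilon\norm{\xi}_\infty^2)<\infty$ to be transported to $\mathds{Q}^n$.
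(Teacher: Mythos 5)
Your proof is correct and takes essentially the same route as the paper: Girsanov removal of the functional drift, a H\"older/Cauchy--Schwarz split in which the stochastic-exponential factor has expectation at most one, Lemma \ref{Kr3} for the $f$-factor, and the exponential moment bound (Corollary \ref{exp1}) combined with the sublinearity in condition \eqref{delayc} and the fact that $\Prob$ and the Girsanov measure coincide on $\mathcal{F}_0$ for the drift-energy factor. The differences are only in implementation: the paper verifies Novikov via Lemma \ref{exp2} and changes measure on $[0,T\wedge\tau]$ directly, obtaining the linear bound through continuity of the linear operator $f\mapsto\E\int_0^{T\wedge\tau}f(s,X(s))\dd{s}$, whereas you re-run the truncation/concatenation from the proof of Lemma \ref{exp2} (which also lets you apply Lemma \ref{Kr3} to a genuine element of $\mathcal{S}^T(\xi)$) and get the linear bound from $e^x\geq1+x$; both versions work.
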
	
\begin{proof}
	By condition \eqref{sigmac}, Lemma \ref{exp2} and the assumption for the initial distribution, the Novikov condition
	$$\E\exp\left(\frac{1}{2}\int_{0}^{T\wedge\tau}\abs{\sigma(t,X(t))^{-1}V(t,X_t)}^2\dd{t}\right)<\infty$$
	is fulfilled. Therefore, under the probability measure
	\begin{align*}
		&\dd{\mathds{Q}}\\
		:=&\exp\left(-\int_{0}^{T\wedge\tau}\sigma(t,X(t))^{-1}V(t,X_t)\cdot\dd{W}(t)-\frac{1}{2}\int_{0}^{T\wedge\tau}\abs{\sigma(t,X(t))^{-1}V(t,X_t)}^2\dd{t}\right)\dd{\Prob},
	\end{align*}
	the process
	$$\tilde{W}(t):=W(t)+\int_{0}^{t\wedge\tau}\sigma(s,X^n(s))^{-1}V(s,X^n_s)\dd{s}$$
	is a Brownian motion and $X$ solves the equation
	\begin{align*}
		\dd{X}(t)&=b(t,X(t))\dd{t}+\sigma(t,X(t))\dd{\tilde{W}}(t),
	\end{align*}
	on $[0,T\wedge\tau]$.
	
	It follows
	\begin{align*}
		&\E_\Prob\exp\left(\int_{0}^{T\wedge\tau}f(t,X(t))\dd{t}\right)\\
		=&\E_{\mathds{Q}}\bigg[\exp\left(\int_{0}^{T\wedge\tau}f(t,X(t))\dd{t}\right)\cdot\exp\bigg(\int_{0}^{T\wedge\tau}\sigma(t,X(t))^{-1}V(t,X_t)\cdot\dd{W}(t)\\
		&\hspace{6cm}+\frac{1}{2}\int_{0}^{T\wedge\tau}\abs{\sigma(t,X(t))^{-1}V(t,X_t)}^2\dd{t}\bigg)\bigg]\\
		=&\E_{\mathds{Q}}\bigg[\exp\left(\int_{0}^{T\wedge\tau}f(t,X(t))\dd{t}\right)\cdot\exp\bigg(\int_{0}^{T\wedge\tau}\sigma(t,X(t))^{-1}V(t,X_t)\cdot\dd{\tilde{W}}(t)\\
		&\hspace{6cm}-\frac{1}{2}\int_{0}^{T\wedge\tau}\abs{\sigma(t,X(t))^{-1}V(t,X_t)}^2\dd{t}\bigg)\bigg]\\
		\leq&\left[\E_{\mathds{Q}}\exp\left(2\int_{0}^{T\wedge\tau}f(t,X(t))\dd{t}\right)\right]^\frac{1}{2}
		\cdot\bigg[\E_{\mathds{Q}}\exp\bigg(2\int_{0}^{T\wedge\tau}\sigma(t,X(t))^{-1}V(t,X_t)\cdot\dd{\tilde{W}}(t)\\
		&\hspace{200pt}-\int_{0}^{T\wedge\tau}\abs{\sigma(t,X(t))^{-1}V(t,X_t)}^2\dd{t}\bigg)\bigg]^\frac{1}{2}\\
		\leq&\left[\E_{\mathds{Q}}\exp\left(2\int_{0}^{T\wedge\tau}f(t,X(t))\dd{t}\right)\right]^\frac{1}{2}\cdot\left[\E_{\mathds{Q}}\exp\left(6\int_{0}^{T\wedge\tau}\abs{\sigma(t,X(t))^{-1}V(t,X_t)}^2\dd{t}\right)\right]^\frac{1}{4}\\
		\leq&C_R
	\end{align*}
	where $C_R=C_R(\xi,p,q,p',q',d,T,\kappa,\norm{b}_{L^q_p(T)},g)\geq0$ because of Lemma \ref{Kr3}, condition \eqref{delayc} and Lemma \ref{exp2}. For the last statement, one can use
	$$\int_{0}^{T\wedge\tau}f(s,X(s))\dd{s}\to0\text{ in probability with respect to }\mathds{Q}$$
	if $\norm{f}_{L^{q'}_{p'}(T)}\to0$, and the bound from above, to conclude
	$$\E_{\Prob}\int_{0}^{T\wedge\tau}f(s,X(s))\dd{s}\to0$$
	if $\norm{f}_{L^{q'}_{p'}(T)}\to0$. Consequently, the linear operator $A:L^{q'}_{p'}(T)\to\R$ given by
	$$f\mapsto\E\int_{0}^{T\wedge\tau}f(s,X(s))\dd{s}$$
	is continuous, which provides the existence of the desired constant.
\end{proof}
\begin{Lemma}
	\label{Hoelder}
	Assume conditions \eqref{driftc}, \eqref{sigmac}, \eqref{dsigmac}, \eqref{delayc} and let $X\in\mathcal{S}^\tau(\xi)$ be a weak solution for some $\mathcal{F}_0$-measurable, $\Co$-valued random variable $\xi$ and a stopping time $\tau$. Then $X$ has almost surely $\alpha$-H\"older continuous paths on $[0,T\wedge\tau]$ for any $0<\alpha<1/2$ and $T>0$.
\end{Lemma}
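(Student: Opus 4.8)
The plan is to verify the hypothesis of Kolmogorov's continuity criterion for the stopped process $\left(X(t\wedge\tau)\right)_{0\le t\le T}$ with arbitrarily large moment exponent, and then to let that exponent and $T$ tend to infinity. Since $\alpha$-H\"older continuity on $[0,T\wedge\tau]$ is a pathwise property and $X$ is continuous (being an It\^o process whose drift is integrable by condition \eqref{delayc} and the Krylov estimate for $b$), it suffices to produce, for every $m\ge1$, a constant $C$ with
$$\E\abs{X(t\wedge\tau)-X(s\wedge\tau)}^m\le C\,(t-s)^{m/2},\qquad 0\le s\le t\le T.$$
Moreover, by conditioning on $\mathcal{F}_0$ (the $(\mathcal{F}_t)$-Brownian motion $W$ is independent of $\mathcal{F}_0$, while $X$ keeps solving \eqref{deq} for the then frozen initial datum), I may assume without loss of generality that $\xi\in\Co$ is deterministic; in particular $\E\exp(\varepsilon\norm{\xi}_\infty^2)<\infty$ for all $\varepsilon>0$, so that Lemmas \ref{exp2} and \ref{Kr4} are available.

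The main idea is to remove the \emph{entire} drift $b+V$ by a single Girsanov transformation. Writing $\phi(t):=\sigma(t,X(t))^{-1}\big(b(t,X(t))+V(t,X_t)\big)$, conditions \eqref{sigmac} and \eqref{delayc} give $\abs{\phi(t)}^2\le 2\kappa\,\abs{b(t,X(t))}^2+2\kappa\, g(\norm{X_t}_\infty)^2$. Applying Lemma \ref{Kr4} to $f=\lambda\abs{b}^2\in L^{q/2}_{p/2}$ for the first term (this is admissible because $\tfrac{d}{p/2}+\tfrac{2}{q/2}=2(\tfrac dp+\tfrac 2q)<2$), and using the sublinear bound $g(s)^2\le 2g(R_\varepsilon)^2+2\varepsilon^2 s^2$ together with the Gaussian-type tail of $\sup_{-r\le u\le T\wedge\tau}\abs{X(u)}$ from Lemma \ref{exp2} for the second, one obtains
$$\E_\Prob\exp\left(\lambda\int_0^{T\wedge\tau}\abs{\phi(t)}^2\dd{t}\right)<\infty\qquad\text{for every }\lambda>0.$$
In particular Novikov's condition holds, so $\dd{\mathds{Q}}:=\exp\big(-\int_0^{T\wedge\tau}\phi(t)\cdot\dd{W}(t)-\tfrac12\int_0^{T\wedge\tau}\abs{\phi(t)}^2\dd{t}\big)\dd{\Prob}$ defines a probability measure under which $\hat{W}(t):=W(t)+\int_0^{t\wedge\tau}\phi(s)\dd{s}$ is a Brownian motion and $X(t\wedge\tau)=X(0)+\int_0^{t\wedge\tau}\sigma(s,X(s))\dd{\hat{W}}(s)$ on $[0,T]$.

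Under $\mathds{Q}$ the process $\left(X(t\wedge\tau)\right)_t$ is therefore a continuous martingale with $\langle X^i\rangle_{t\wedge\tau}-\langle X^i\rangle_{s\wedge\tau}\le\kappa(t-s)$ by condition \eqref{sigmac}, so the Burkholder--Davis--Gundy inequality gives $\E_\mathds{Q}\abs{X(t\wedge\tau)-X(s\wedge\tau)}^m\le C_m(t-s)^{m/2}$ for every $m\ge1$. To pass back to $\Prob$ I would bound the moments of the density: for every $r\ge1$,
$$\E_\mathds{Q}\left[\left(\tfrac{\dd{\Prob}}{\dd{\mathds{Q}}}\right)^{r}\right]=\E_\Prob\exp\left((r-1)\int_0^{T\wedge\tau}\phi\cdot\dd{W}+\tfrac{r-1}{2}\int_0^{T\wedge\tau}\abs{\phi}^2\dd{t}\right)<\infty,$$
which follows by Cauchy--Schwarz, the fact that $\mathcal{E}\big(c\int\phi\cdot\dd{W}\big)$ has expectation $1$ (Novikov again, via the estimate of the previous paragraph), and that same estimate. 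Hölder's inequality then yields, for a suitable $\delta>0$,
$$\E_\Prob\abs{X(t\wedge\tau)-X(s\wedge\tau)}^m\le\left(\E_\mathds{Q}\left[\left(\tfrac{\dd{\Prob}}{\dd{\mathds{Q}}}\right)^{1+\delta}\right]\right)^{\frac1{1+\delta}}\left(\E_\mathds{Q}\abs{X(t\wedge\tau)-X(s\wedge\tau)}^{\frac{m(1+\delta)}{\delta}}\right)^{\frac{\delta}{1+\delta}}\le C(t-s)^{m/2}.$$
By Kolmogorov's continuity criterion $t\mapsto X(t\wedge\tau)$ is then a.s. $\alpha$-H\"older on $[0,T]$ for every $\alpha<\tfrac12-\tfrac1m$; intersecting the exceptional null sets over a sequence $m\to\infty$ and over $T\in\N$, and using $X=X^\tau$ on $[0,T\wedge\tau]$, gives the claim.

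The one genuinely delicate point is the exponential-integrability bound for $\int_0^{T\wedge\tau}\abs{\phi(t)}^2\dd{t}$: it is precisely here that the strict inequality $\tfrac dp+\tfrac 2q<1$ enters (so that $\abs{b}^2$ is still subcritical for Lemma \ref{Kr4}), and that the sublinearity in condition \eqref{delayc} must be combined with the tail estimate of Lemma \ref{exp2} in order to absorb the functional drift $V$. Everything else --- Burkholder--Davis--Gundy, the Girsanov computation, the Hölder splitting of the Radon--Nikodym density, and Kolmogorov's criterion --- is routine.
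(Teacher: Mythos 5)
Your argument is correct, but it takes a genuinely different and noticeably heavier route than the paper. The paper simply decomposes the solution into its three summands and treats each separately: the functional-drift integral $t\mapsto\int_0^{t\wedge\tau}V(s,X_s)\dd{s}$ is pathwise Lipschitz because $\abs{V(s,X_s)}\le g(\norm{X_s}_\infty)$ is bounded along any continuous path; the singular-drift integral is a.s. $1/2$-H\"older by Cauchy--Schwarz once one knows $\E\int_0^{T\wedge\tau}\abs{b(t,X(t))}^2\dd{t}<\infty$, which is exactly the linear part of Lemma \ref{Kr4} applied to $f=\abs{b}^2\in L^{q/2}_{p/2}$; and the stochastic integral is $\alpha$-H\"older for $\alpha<1/2$ simply because $\sigma$ is bounded. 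No change of measure, no exponential moments, no reduction to deterministic initial data are needed. You instead remove the whole drift $b+V$ by one Girsanov transformation, which forces you to first establish all exponential moments of $\int_0^{T\wedge\tau}\abs{\phi}^2\dd{t}$ (combining Lemma \ref{Kr4} for $\abs{b}^2$ with the sublinearity of $g$ and the Gaussian tail from Lemma \ref{exp2}, hence also the conditioning on $\mathcal{F}_0$ to make $\xi$ deterministic), then control moments of the Radon--Nikodym density to transfer the Burkholder--Davis--Gundy bounds back to $\Prob$, and finish with Kolmogorov's criterion. The payoff of your route is a quantitative moment estimate $\E_\Prob\abs{X(t\wedge\tau)-X(s\wedge\tau)}^m\le C(t-s)^{m/2}$ under the original measure (with constants depending on the initial segment), which is more information than the purely pathwise statement; the cost is that you invoke the full strength of Lemmas \ref{exp2} and \ref{Kr4} and a measure change where the paper needs only the first-moment Krylov bound and elementary path arguments. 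Both proofs are valid and rest ultimately on the same Krylov-type estimate.
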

\begin{proof}
	Let $0<\alpha<1/2$ and $T>0.$
	\begin{enumerate}
		\item $t\mapsto\int_{0}^{t\wedge\tau}V(s,X_s)\dd{s}$ has Lipschitz continuous paths on $[0,T]$ since $V$ is locally bounded.
		\item $t\mapsto\int_{0}^{t\wedge\tau}b(t,X(s))\dd{s}$ has almost surely $\alpha$-H\"older continuous paths on $[0,T]$ because $\E\int_{0}^{T\wedge\tau}\abs{b(t,X(t))}^2\dd{t}<\infty$ by Lemma \ref{Kr4}.
		\item $t\mapsto\int_{0}^{t\wedge\tau}\sigma(s,X(s))\dd{W}(s)$ has almost surely $\alpha$-H\"older continuous paths on $[0,T]$ since $\sigma$ is bounded.
	\end{enumerate}
\end{proof}
\section{Pathwise Uniqueness}
\begin{Theorem}
	\label{Uniqueness}
	Assume conditions \eqref{driftc}, \eqref{sigmac}, \eqref{dsigmac}, \eqref{delayc}, \eqref{lip}, let $\tau$ be a stopping time and $R>0$. For every two local solutions $X\in\mathcal{S}^\tau(x)$ and $\hat{X}\in\mathcal{S}^\tau(\hat{x})$ where $x,\hat{x}\in\Co$ with $\max(\norm{x}_\infty,\norm{\hat{x}}_\infty)\leq R$, every $\gamma\geq1$ and $T_0>0$, one has
	$$\E\norm{X_{t\wedge\tau}-\hat{X}_{t\wedge\tau}}_\infty^\gamma\leq C\norm{x-\hat{x}}_\infty^\gamma, \ 0\leq t\leq T_0$$
	for some constant $C$ depending only on $\gamma,d,p,q,T_0,\kappa,K,\norm{b}_{L^q_p(T_0)},\norm{\nabla\sigma}_{L^q_p(T_0)},g$ and $R$.
\end{Theorem}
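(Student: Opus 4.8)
Here is a plan for proving Theorem~\ref{Uniqueness}.

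The plan is to remove the singular drift $b$ by Zvonkin's transformation and then run a stochastic Gronwall estimate on the two transformed processes; the functional drift $V$ is controlled by \eqref{delayc} and \eqref{lip}, while the exponential moments of Lemmas~\ref{exp2} and \ref{Kr4} — available here since $\xi=x$ is deterministic with $\norm{x}_\infty\le R$ — supply the integrability the Gronwall lemma needs.

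For a large parameter $\lambda>0$, let $u=(u^1,\dots,u^d)$, $u^i\in H^q_{2,p}(T_0)$, solve the backward system
\begin{align*}
\partial_tu^i+\tfrac12(\sigma\sigma^\top)^{k\ell}\partial_k\partial_\ell u^i+b\cdot\nabla u^i-\lambda u^i=-b^i,\qquad u^i(T_0,\cdot)=0 .
\end{align*}
Under \eqref{driftc}--\eqref{dsigmac} this is uniquely solvable with the usual a~priori estimate; since $\tfrac dp+\tfrac2q<1$ one has $H^q_{2,p}(T_0)\hookrightarrow C([0,T_0];C^{1,\beta}(\R^d))$ for some $\beta>0$ and, for $\lambda$ large enough, $\norm{u}_\infty+\norm{\nabla u}_\infty\le\tfrac12$, so $\Phi(t,x):=x+u(t,x)$ is for each $t$ a bi-Lipschitz homeomorphism of $\R^d$ with $\Phi,\Phi^{-1},\nabla\Phi$ bounded and Lipschitz constants $\le2$ (all as in \cite{Zhang2011}). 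The generalised It\^o formula for $H^q_{2,p}$-functions — legitimate along a solution of \eqref{deq} because Lemma~\ref{Kr4} controls $\E\int_0^{T_0\wedge\tau}\abs{f(s,X(s))}\dd{s}$ for the relevant $f\in L^q_p(T_0)$, exactly as in \cite{Zhang2011} — then yields that $Y(t):=\Phi(t,X(t))$ satisfies, on $[0,T_0\wedge\tau]$,
\begin{align*}
\dd{Y}(t)=\big[\lambda u(t,X(t))+\nabla\Phi(t,X(t))V(t,X_t)\big]\dd{t}+\nabla\Phi(t,X(t))\sigma(t,X(t))\dd{W}(t),
\end{align*}
with $X(t)=\Phi^{-1}(t,Y(t))$, and similarly $\hat Y(t):=\Phi(t,\hat X(t))$. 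As $\Phi^{-1}$ is $2$-Lipschitz, $\norm{X_t-\hat X_t}_\infty\le\norm{x-\hat x}_\infty+2\sup_{0\le s\le t\wedge\tau}\abs{Y(s)-\hat Y(s)}$ and $\abs{Y(0)-\hat Y(0)}\le2\norm{x-\hat x}_\infty$, so it suffices to bound $\E\sup_{s\le t\wedge\tau}\abs{Y(s)-\hat Y(s)}^\gamma$, and by Jensen's inequality only $\gamma=2m$, $m\in\N$, is needed.

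Put $Z(t)=Y(t)-\hat Y(t)$ and $R_s=\sup_{u\le s}\abs{Z(u)}$. Writing $\nabla\Phi=I+\nabla u$ and using the maximal-function inequality $\abs{h(x)-h(y)}\le c_d\abs{x-y}\big(M\abs{\nabla h}(x)+M\abs{\nabla h}(y)\big)$ ($M$ the Hardy--Littlewood operator in space) for $h\in\{\nabla u^i,\,(\nabla\Phi\,\sigma)^{ij}\}$ — whose distributional gradients lie in $L^q_p(T_0)$ since $\abs{\nabla^2u},\abs{\nabla\sigma}\in L^q_p(T_0)$ and $\nabla u$ is bounded — together with $\abs{V(t,X_t)-V(t,\hat X_t)}\le K\norm{X_t-\hat X_t}_\infty$ from \eqref{lip}, $\abs{V(t,X_t)}\le g(\norm{X_t}_\infty)$ from \eqref{delayc}, and $\abs{X(t)-\hat X(t)}\le2\abs{Z(t)}$, It\^o's formula for $\abs{Z(t)}^{2m}$ leads to an estimate of the shape
\begin{align*}
\abs{Z(t)}^{2m}\le\abs{Z(0)}^{2m}+\int_0^tR_s^{2m}\,\dd{A}_s+C_m\norm{x-\hat x}_\infty^{2m}\big(1+A_{T_0\wedge\tau}\big)+\mathcal M_t,\qquad t\le T_0\wedge\tau,
\end{align*}
where $\mathcal M$ is a local martingale with $\mathcal M_0=0$ and $A$ is nondecreasing with density bounded by $C_m\big(1+\lambda+K+(1+g(\norm{X_s}_\infty))\Psi(s)+\Psi(s)^2\big)$, $\Psi(s)$ being built from the maximal functions of $\abs{\nabla^2u}$ and $\abs{\nabla\sigma}$ evaluated at $X(s)$ and $\hat X(s)$, so $\Psi\in L^q_p(T_0)$ along both paths. (Here $\norm{\nabla u}_\infty\le\tfrac12$ keeps $\nabla\Phi,(\nabla\Phi)^{-1}$ bounded, and the terms $\abs{Z}^{2m-1}\norm{X_s-\hat X_s}_\infty$ split, by Young's inequality, into an $R_s^{2m}\dd{A}_s$-part and the displayed $\norm{x-\hat x}_\infty^{2m}$-part.) One then applies the stochastic Gronwall lemma of von Renesse--Scheutzow \cite{Renesse,Scheutzow}, which is tailored to inequalities with a running supremum inside the integral: provided $\E\exp(pA_{T_0\wedge\tau})<\infty$ for some $p>0$ (which also gives all moments of $A_{T_0\wedge\tau}$), it yields $\E\sup_{s\le t\wedge\tau}\abs{Z(s)}^{2m}\le C\,\E\big[\abs{Z(0)}^{2m}+\norm{x-\hat x}_\infty^{2m}(1+A_{T_0\wedge\tau})\big]\le C'\norm{x-\hat x}_\infty^{2m}$. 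With the previous step and Jensen this is the assertion; the argument works for arbitrary $T_0$ (take $\lambda=\lambda(T_0)$), so nothing more is needed.

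The technical heart is the verification of $\E\exp(pA_{T_0\wedge\tau})<\infty$. The constant-, $\lambda$- and $K$-terms are trivial. Since $p>1$ the maximal operator preserves $L^{p}(\R^d)$, so $\Psi\in L^q_p(T_0)$ along both paths with $\tfrac dp+\tfrac2q<2$, and $\Psi^2\in L^{q/2}_{p/2}(T_0)$ with $\tfrac d{p/2}+\tfrac2{q/2}=\tfrac{2d}p+\tfrac4q<2$ by \eqref{driftc}; hence Lemma~\ref{Kr4}, applied along $X$ and along $\hat X$ and combined by the Cauchy--Schwarz inequality, gives $\E\exp(c\int_0^{T_0\wedge\tau}(\Psi+\Psi^2)\dd{s})<\infty$ and, by Cauchy--Schwarz in time, $\E\exp\!\big(\mu(\int_0^{T_0\wedge\tau}\Psi\,\dd{s})^2\big)\le\E\exp\!\big(\mu T_0\int_0^{T_0\wedge\tau}\Psi^2\dd{s}\big)<\infty$ for all $c,\mu>0$. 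The delicate term $\int_0^{T_0\wedge\tau}g(\norm{X_s}_\infty)\Psi(s)\dd{s}$ is treated via the linear growth $g(r)\le C(1+r)$ implied by \eqref{delayc}: it is $\le C\int_0^{T_0\wedge\tau}\Psi+C\norm{X}_{*}\int_0^{T_0\wedge\tau}\Psi$ with $\norm{X}_{*}:=\sup_{-r\le s\le T_0\wedge\tau}\abs{X(s)}$, and Young's inequality $c\,\norm{X}_{*}\!\int\!\Psi\le\tfrac a2\norm{X}_{*}^2+\tfrac{c^2}{2a}(\int\Psi)^2$ with $a=\tfrac12(8d\kappa T_0)^{-1}$ — using $\E\exp(a\norm{X}_{*}^2)<\infty$ from Lemma~\ref{exp2} (valid because $\xi=x$ is deterministic) and the bound just noted for $(\int\Psi)^2$ — closes the estimate. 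Assembling these bounds and tracking the constants so that the final $C$ has the form asserted in the theorem is where the real work lies.
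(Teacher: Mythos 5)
Your strategy is in essence the paper's own: remove $b$ by the Zvonkin transformation, control the differences of $\nabla u$ and $\nabla\Phi\,\sigma$ by the Hardy--Littlewood maximal function (Lemma \ref{Hardy-Littlewood}), verify exponential integrability of the resulting random rate via Lemmas \ref{exp2} and \ref{Kr4} (your treatment of the term $g(\norm{X_s}_\infty)\Psi(s)$ by sublinearity of $g$, Young's inequality and Lemma \ref{exp2}, and of $\Psi^2\in L^{q/2}_{p/2}(T_0)$ using $\tfrac{2d}{p}+\tfrac{4}{q}<2$, is exactly the paper's computation), and finish with a stochastic Gronwall argument to cope with the segment norm $\norm{X_t-\hat X_t}_\infty$. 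The two structural deviations -- the $\lambda$-resolvent PDE on all of $[0,T_0]$ with $\norm{\nabla u}_\infty\le\tfrac12$ in place of the paper's terminal-value problems $u(\cdot;T)$ on intervals of length $\delta$ with an induction, and your bookkeeping of the initial segment through $\norm{X_t-\hat X_t}_\infty\le\norm{x-\hat x}_\infty+2\sup_{s}\abs{Z(s)}$ -- are harmless.

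There is, however, one step that fails as written: the Gronwall step. The lemma you invoke (Lemma \ref{Gronwall}, von Renesse--Scheutzow) requires the dominating integral to be $K\int_0^t\sup_{r\le s}Z(r)\dd{s}$ with a \emph{deterministic} constant $K$, whereas your inequality has $\int_0^t R_s^{2m}\dd{A}(s)$ with a random, unbounded density (containing $g(\norm{X_s}_\infty)\Psi(s)+\Psi(s)^2$); and, more seriously, both this lemma and its random-coefficient refinement in \cite{Scheutzow} only bound moments of order $p<1$ of the running supremum -- that restriction is the whole point, because the local martingale $\mathcal{M}$ cannot be handled by Burkholder--Davis--Gundy without a priori finiteness of $\E\sup\abs{Z}^{2m}$, which you do not have. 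So the displayed conclusion $\E\sup_{s\le t\wedge\tau}\abs{Z(s)}^{2m}\le C\,\E\bigl[\abs{Z(0)}^{2m}+\norm{x-\hat x}_\infty^{2m}(1+A_{T_0\wedge\tau})\bigr]$ does not follow from the cited lemma under the hypothesis $\E\exp(pA_{T_0\wedge\tau})<\infty$ alone. The repair is exactly the paper's device: either multiply by $e^{-A(t)}$ so that It\^o's formula absorbs the $\dd{A}$-terms, apply the constant-rate Gronwall lemma with exponent $\tfrac12$ to obtain $\E\sup e^{-\frac12 A}\abs{Z}^{2m}$, and then remove the factor by Cauchy--Schwarz using $\E e^{\frac12 A(T_0\wedge\tau)}<\infty$; or invoke the random-coefficient version in \cite{Scheutzow} with some exponent $p\nu<1$ and exploit that $\gamma\ge1$ is arbitrary (run the argument for twice the power). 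With this adjustment your proposal coincides, up to the cosmetic differences noted above, with the paper's proof.
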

By Theorem \ref{PDE}, for every $0<T\leq T_0$, there exists a solution
$$\tilde{u}(\cdot;T)\in\left(H^q_{2,p}(T_0)\right)^d$$
of the coordinatewise PDE system
\begin{align*}
	\partial_t\tilde{u}(t,x;T)+L_t\tilde{u}(t,x;T)+b(t,x)&=0,\\
	\tilde{u}(T,x;T)&=0
\end{align*}
for all $t\in[0,T]$ and $x\in\R^d$ where
$$L_tv(t,x):=\frac{1}{2}\sum_{i,j,k=1}^{d}\sigma^{i,k}(t,x)\sigma^{j,k}(t,x)\partial_i\partial_jv(t,x)+b(t,x)\cdot\nabla v(t,x), \ v\in H^q_{2,p}(T_0).$$
Additionally, it holds
$$\sup\limits_{T\in[0,T_0]}\left(\norm{\partial_t\tilde{u}^i(\cdot;T)}_{L_p^q(T)}+\norm{\tilde{u}^i(\cdot;T)}_{H^q_{2,p}(T)}\right)<\infty, \ i=1,\dots,d$$
and by the embedding Theorem \ref{embedding}, there exists a uniform $\delta$ such that for all $0\leq S\leq T$ with $T-S\leq\delta$
$$\abs{\tilde{u}(t,x;T)-\tilde{u}(t,y;T)}\leq\frac{1}{2}\abs{x-y}$$
for all $t\in[S,T]$ and $x,y\in\R^d$. Furthermore, the function
$$u(t,x;T):=\tilde{u}(t,x;T)+x$$
satisfies coordinatewise the equation
\begin{align*}
\partial_tu(t,x;T)+L_tu(t,x;T)&=0,\\
u(T,x;T)&=x.
\end{align*}
\begin{proof}
	Let $X\in\mathcal{S}^\tau(x)$ and $\hat{X}\in\mathcal{S}^\tau(\hat{x})$ for some $(\mathcal{F}_t)_{t\geq0}$-stopping time $\tau$ where $x,\hat{x}\in\Co$. Choose $T_0>0$, $\gamma\geq1$ arbitrarily and $\delta>0$ like above. By induction, it suffices to prove for every $0\leq S\leq T\leq T_0$ with $T-S\leq\delta$ the implication
	\begin{align*}
		&\E\norm{X_{S\wedge\tau}-\hat{X}_{S\wedge\tau}}_\infty^\gamma\leq C_1\norm{x-\hat{x}}_\infty^\gamma\\
		\implies&\E\norm{X_{T\wedge\tau}-\hat{X}_{T\wedge\tau}}_\infty^\gamma\leq C_2\norm{x-\hat{x}}_\infty^\gamma
	\end{align*}
	for constants $C_1$ and $C_2$ depending only on $\gamma,d,p,q,\kappa,K,T_0,\norm{b}_{L^q_p(T_0)},\norm{\nabla\sigma}_{L^q_p(T_0)},g$ and $R$. For the sake of simplicity, we write $u(\cdot):=u(\cdot;T)$. Furthermore, define
	\begin{align*}
		Y(t)&:=u(t,X(t)), \ S\wedge\tau\leq t\leq T\wedge\tau\\
		\hat{Y}(t)&:=u(t,\hat{X}(t)), \ S\wedge\tau\leq t\leq T\wedge\tau.
	\end{align*}
	By the choice of $\delta$, one has for the difference processes $Z(t):=X(t)-\hat{X}(t)$ and $\tilde{Z}(t):=Y(t)-\hat{Y}(t)$
	$$\frac{1}{2}\abs{\tilde{Z}(t)}\leq\abs{Z(t)}\leq\frac{3}{2}\abs{\tilde{Z}(t)}, \ S\wedge\tau\leq t\leq T\wedge\tau.$$
	Due to Lemma \ref{Kr4}, Lemma \ref{Ito} is applicable, which gives
	\begin{align*}
		\tilde{Z}(t)=&\int_{S\wedge\tau}^{t\wedge\tau}\left(Du(s,X(s))V(s,X_s)-D u(s,\hat{X}(s))V(s,\hat{X}_s)\right)\dd{s}\\
		&+\int_{S\wedge\tau}^{t\wedge\tau}\left(Du(s,X(s))\sigma(s,X(s))-D u(s,\hat{X}(s))\sigma(s,\hat{X}(s))\right)\dd{W}(s)
	\end{align*}
	and consequently
	\begin{align*}
		&\dd{\abs{\tilde{Z}}^{2\gamma}}(t)\\
		=&2\gamma\abs{\tilde{Z}(t)}^{2\gamma-2}\tilde{Z}(t)^\top\left(Du(t,X(t))V(t,X_t)-Du(t,\hat{X}(t))V(t,\hat{X}_t)\right)\dd{t}\\
		+&2\gamma\abs{\tilde{Z}(t)}^{2\gamma-2}\tilde{Z}(t)^\top\left(Du(t,X(t))\sigma(t,X(t))-Du(t,\hat{X}(t))\sigma(t,\hat{X}(t))\right)\dd{W}(t)\\
		+&\gamma\abs{\tilde{Z}(t)}^{2\gamma-2}\norm{Du(t,X(t))\sigma(t,X(t))-Du(t,\hat{X}(t))\sigma(t,\hat{X}(t))}_{HS}^2\dd{t}\\
		+&2\gamma(\gamma-1)\abs{\tilde{Z}(t)}^{2\gamma-4}\abs{\left(Du(t,X(t))\sigma(t,X(t))-Du(t,\hat{X}(t))\sigma(t,\hat{X}(t))\right)^\top\tilde{Z}(t)}^2\dd{t}.
	\end{align*}
	Using the boundedness of $Du$ and condition \eqref{lip} gives for $S\leq t_1\leq t_2\leq T$
	\begin{align*}
		&\abs{Z(t_2\wedge\tau)}^{2\gamma}-\abs{Z(t_1\wedge\tau)}^{2\gamma}\\
		\leq&c\int_{t_1\wedge\tau}^{t_2\wedge\tau}\norm{Z_s}_\infty^{2\gamma}\dd{s}\\
		+&c\int_{t_1\wedge\tau}^{t_2\wedge\tau}\abs{Z(s)}^{2\gamma-1}\norm{Du(s,X(s))-Du(s,\hat{X}(s))}_{HS}\abs{V(s,X_s)}\dd{s}\\
		+&c\int_{t_1\wedge\tau}^{t_2\wedge\tau}\abs{\tilde{Z}(s)}^{2\gamma-2}\tilde{Z}(s)^\top\left(Du(s,X(s))\sigma(s,X(s))-Du(s,\hat{X}(s))\sigma(s,\hat{X}(s))\right)\dd{W}(s)\\
		+&c\int_{t_1\wedge\tau}^{t_2\wedge\tau}\abs{Z(s)}^{2\gamma-2}\norm{Du(s,X(s))\sigma(s,X(s))-Du(s,\hat{X}(s))\sigma(s,\hat{X}(s))}_{HS}^2\dd{s}\\
		=&I_1+I_2+I_3+I_4
	\end{align*}
	where $c>0$ is a constant depending only on $\gamma,d,p,q,\kappa,K,T_0,\norm{b}_{L^q_p(T_0)},g$ and $R$. The idea is to apply the stochastic Gronwall Lemma \ref{Gronwall}. To get rid of the badly behaving terms $I_2$ and $I_4$, one can use a suitable multiplier of the form $e^{-A(t)}$ - like in \cite{Fedrizzi2} - where $A$ is an adapted, continuous process. Here, we choose
	\begin{align*}
		A(t)&:=c\int_{S\wedge\tau}^{t\wedge\tau}\abs{V(s,X_s)}\frac{\norm{Du(s,X(s))-Du(s,\hat{X}(s))}_{HS}}{\abs{Z(s)}}\1_{Z(s)\neq0}\dd{s}\\
		&+c\int_{S\wedge\tau}^{t\wedge\tau}\frac{\norm{Du(s,X(s))\sigma(s,X(s))-Du(s,\hat{X}(s))\sigma(s,\hat{X}(s))}_{HS}^2}{\abs{Z(s)}^2}\1_{Z(s)\neq0}\dd{s}
	\end{align*}
	for $S\wedge\tau\leq t\leq T\wedge\tau$.
	To show that $A$ is indeed well defined, it suffices to show the existence of a constant $\hat{C}=\hat{C}(\gamma,d,p,q,\kappa,K,T_0,\norm{b}_{L^q_p(T_0)},\norm{\nabla\sigma}_{L^q_p(T_0)},g,R)\geq0$ such that
	$$\E \exp\left(\frac{1}{2}A(T\wedge\tau)\right)\leq \hat{C}.$$
	Since $u$ belongs coordinatewise to $ H^q_{2,p}(T_0)$ and by conditions \eqref{sigmac} and \eqref{dsigmac}, it holds
	$$(Du\cdot\sigma)^{i,j}\in L^q\left(T_0;W^{1,p}\left(\R^d\right)\right), \ i,j=1,\dots,d.$$
	Additionally, $C^\infty_c\left(\R^{d+1}\right)$ is dense in $L^q\left(T_0;W^{1,p}\left(\R^d\right)\right)$. Hence, by Young's inequality, Lemmas \ref{exp2} and \ref{Kr4}, it suffices to show for all $\tilde{R}>0$ the existence of a constant $C_{\tilde{R}}=C_{\tilde{R}}(d,p,q,\kappa,T_0,\norm{b}_{L^q_p(T_0)},g,R)$ such that
	$$\E\exp\left(\int_{S\wedge\tau}^{T\wedge\tau}\frac{\abs{f(s,X(s))- f(s,\hat{X}(s))}^2}{\abs{Z(s)}^2}\1_{Z(s)\neq0}\dd{s}\right)\leq C_{\tilde{R}}$$ 
	for all $f\in C^\infty\left(\R^{d+1}\right)$ with $\norm{f}_{L^q\left(T_0;W^{1,p}\left(\R^d\right)\right)}\leq\tilde{R}$. By Lemmas \ref{Hardy-Littlewood} and \ref{Kr4}, one obtains
	\begin{align*}
		&\E\exp\left(\int_{S\wedge\tau}^{T\wedge\tau}\frac{\abs{f(s,X(s))- f(s,\hat{X}(s))}^2}{\abs{Z(s)}^2}\1_{Z(s)\neq0}\dd{s}\right)\\
		\leq&\E\exp\left(C_d^2\int_{S\wedge\tau}^{T\wedge\tau}\left(\mathcal{M}\abs{\nabla f}(X(s))+\mathcal{M}\abs{\nabla f}(\hat{X}(s))\right)^2\dd{s}\right)\\
		\leq&C_{\tilde{R}}
	\end{align*}
	where $C_{\tilde{R}}=C_{\tilde{R}}(d,p,q,\kappa,T_0,\norm{b}_{L^q_p(T_0)},g,R)$. By the It\=o formula, it holds
	$$e^{-A(t)}\abs{Z(t\wedge\tau)}^{2\gamma}\leq\abs{Z(S\wedge\tau)}^{2\gamma}+ c\int_{S\wedge\tau}^{t\wedge\tau}e^{-A(s)}\norm{Z_s}_\infty^{2\gamma}\dd{s}+\text{local martingale}.$$
	Applying the stochastic Gronwall Lemma \ref{Gronwall} gives
	$$\E\left[\sup_{S\wedge\tau\leq t\leq T\wedge\tau}e^{-\frac{1}{2}A(t)}\abs{Z(t)}^\gamma\right]\leq \tilde{C}\E\norm{Z_{S\wedge\tau}}_\infty^\gamma\leq\tilde{C}C_1\norm{x-\hat{x}}_\infty^\gamma$$
	for a constant $\tilde{C}=\tilde{C}(\gamma,d,p,q,\kappa,K,T_0,\norm{b}_{L^q_p(T_0)},\norm{\nabla\sigma}_{L^q_p(T_0)},g,R)$. Due to the estimates from above, the Cauchy-Schwarz inequality and by redefining $\gamma:=2\gamma$, one finally obtains
	\begin{align*}
		&\E\left[\sup_{S\wedge\tau\leq t\leq T\wedge\tau}\abs{Z(t)}^\gamma\right]\\
		\leq&\left(\E e^{\frac{1}{2}A(T\wedge\tau)}\right)^\frac{1}{2}\left[\E\left(\sup_{S\wedge\tau\leq t\leq T\wedge\tau}e^{-\frac{1}{2}A(t)}\abs{Z(t)}^{2\gamma}\right)\right]^{\frac{1}{2}}\\
		\leq&C_2\norm{x-\hat{x}}_\infty^\gamma
	\end{align*}
	for some constant $C_2=C_2(\gamma,d,p,q,\kappa,K,T_0,\norm{b}_{L^q_p(T_0)},\norm{\nabla\sigma}_{L^q_p(T_0)},g,R)$.
\end{proof}
\begin{Remark}
	The application of the stochastic Gronwall Lemma \ref{Gronwall} is crucial in the proof above, since one has to deal with the supremum norm of path segments. Another standard ansatz might be to apply Doob's or Burkholder's inequality. Unfortunately, it does not work due to the bad regularity of the quadratic variation term of the martingale part. Thus, the inequalities used in \cite{Zhang2011} or \cite{Fedrizzi2} are not suitable.
\end{Remark}
\begin{proof}[Proof of Theorem \ref{thm}]
	This theorem is a consequence of Theorems \ref{Existence}, \ref{Uniqueness}, Lemma \ref{Hoelder} and the Yamada-Watanabe Theorem (cf. \cite{yamada1971}).
\end{proof}
\begin{proof}[Proof of Theorem \ref{locthm}]
	Firstly, assume that $V$ is bounded and conditions \eqref{driftc}, \eqref{sigmac} and \eqref{dsigmac} are fulfilled. Let $X,\hat{X}\in\mathcal{S}^\tau(x)$ for a stopping time $\tau$. Sets of the type
	$$\mathcal{K}_n:=\left\{x\in\Co:\sup\limits_{t\in[-r,0]}\abs{x(t)}+\sup\limits_{-r\leq s<t\leq 0}\frac{\abs{x(t)-x(s)}}{\abs{t-s}^{1/4}}\leq n\right\}$$
	with $n\in\N$ are compact in $\Co$ and by Lemma \ref{Hoelder}, it holds
	$$\lim\limits_{n\to\infty}\tau^n=\tau$$
	for 
	$$\tau^n:=\inf\left\{t\leq\tau:X_t\notin\mathcal{K}_n\text{ or }\hat{X_t}\notin\mathcal{K}_n\right\}\wedge\tau\wedge n.$$
	By assumption, for each $n\in\N$, there exists a $C_{\mathcal{K}_n,n}>0$ such that $V$ is $C_{\mathcal{K}_n,n}$-Lipschitz continuous in space on $\mathcal{K}_n$. So, there exists a measurable, bounded, in space $C_{\mathcal{K}_n,n}$-Lipschitz continuous extension $V^n:\R_{\geq0}\times\Co\to\R^d$ of $V_{\big|[0,n]\times \mathcal{K}_n}$, i.e.
	$$V^n(t,x)=V(t,x) \ \forall x\in\mathcal{K}_n, \ 0\leq t\leq n.$$
	By Theorem \ref{thm}, for each $n\in\N$, there exists a global, unique strong solution $X^n$ for equation $\eqref{deq}$ with coefficients $V^n$, $b$ and $\sigma$. Therefore, one has
	$$X(t)=\hat{X}(t)=X^n(t), \ 0\leq t\leq\tau^n,$$
	which provides the pathwise uniqueness.\\
	For the general case, let again $X,\hat{X}\in\mathcal{S}^\tau$ for some stopping time $\tau$. Since $V$ is bounded on compact sets, one has
	$$\lim\limits_{n\to\infty}\tau_n=\tau$$
	for
	$$\tau_n:=\inf\left\{t\leq\tau:\abs{V(t,X_t)}>n,\abs{X(t)}>n,\abs{V(t,\hat{X}_t)}>n\text{ or }\abs{\hat{X}(t)}>n\right\}\wedge\tau\wedge n.$$
	Define for each $n\in\N$
	$$b^n(t,x):=\1_{t,\abs{x}\leq n}b(t,x), \ (t,x)\in\R_{\geq0}\times\R^d$$
	and
	$$\sigma^n(t,x):=\sigma(t,\phi^n(x)), \ (t,x)\in\R_{\geq0}\times\R^d$$
	where $\phi^n:\R^d\to B_{n+1}$ is a $C^1$-diffeomorphism defined by
	$$\phi(x):=
		\begin{cases}
			x, & \abs{x}\leq n,\\
			\rho^n(\abs{x})\frac{x}{\abs{x}}, & \abs{x}>n
		\end{cases}$$
	with
	\begin{align*}
		\rho^n(t)&:=n+1-\left(\frac{1}{\alpha_n}t-\frac{1}{\alpha_n}n+1\right)^{-\alpha_n}, \ t>n,\\
		\alpha_n&:=\frac{p_{n+1}-d}{2}.
	\end{align*}
	Furthermore, redefine
	$$V^n(t,x):=(-n)\vee V(t,x)\wedge n.$$
	By the previous discussion, one has for each $n\in\N$ a global, unique strong solution $X^n$ of equation \eqref{deq} with coefficients $V^n$, $b^n$ and $\sigma^n$ and it holds
	$$X(t)=X^n(t)=\hat{X}(t), \ 0\leq t\leq \tau_n$$
	which provides the pathwise uniqueness and the stated maximal solution until the explosion time $\zeta$. 
\end{proof}
\begin{proof}[Proof of Theorem \ref{constthm}]
	The Lipschitz condition \eqref{lip} is not necessary for any result in section 2. Accordingly, one can follow the proof of Theorem \ref{Uniqueness} in exactly the same way. Using the same notation, the term
	$$\int_{t_1\wedge\tau}^{t_2\wedge\tau}2\gamma\abs{\tilde{Z}(s)}^{2\gamma-2}\tilde{Z}(s)^\top\left(Du(s,X(s))V(s,X_s)-Du(s,\hat{X}(s))V(s,\hat{X}_s)\right)\dd{s}$$
	was split into
	\begin{align*}
		&\int_{t_1\wedge\tau}^{t_2\wedge\tau}2\gamma\abs{\tilde{Z}(s)}^{2\gamma-2}\tilde{Z}(s)^\top\left(Du(s,X(s))V(s,X_s)-Du(s,\hat{X}(s))V(s,\hat{X}_s)\right)\dd{s}\\
		=&\int_{t_1\wedge\tau}^{t_2\wedge\tau}2\gamma\abs{\tilde{Z}(s)}^{2\gamma-2}\tilde{Z}(s)^\top\left(Du(s,\hat{X}(s))V(s,X_s)-Du(s,\hat{X}(s))V(s,\hat{X}_s)\right)\dd{s}\\
		&+\int_{t_1\wedge\tau}^{t_2\wedge\tau}2\gamma\abs{\tilde{Z}(s)}^{2\gamma-2}\tilde{Z}(s)^\top\left(Du(s,X(s))V(s,X_s)-Du(s,\hat{X}(s))V(s,X_s)\right)\dd{s}
	\end{align*}
	where $S\leq t_1\leq t_2\leq T$. The Lipschitz condition \eqref{lip} was only used to estimate the first summand by using 
	$$\abs{V(t,X_t)-V(t,\hat{X}_t)}\leq K\norm{X_t-\hat{X}_t}_\infty, \ t\geq0.$$
	If one can show that the same inequality still holds for two solutions $X\in\mathcal{S}^\tau(x)$ and $\hat{X}\in\mathcal{S}^\tau(x)$ with the same initial value $x\in\Co$, the claimed pathwise uniqueness will follow. Now, one has
	$$X(t)-\hat{X}(t)=\int_{0}^{t}\left(b(s,X(s))-b(s,\hat{X}(s))\right)\dd{s}$$
	since $\sigma$ is assumed to be space-independent. Together with Lemma \ref{Kr4}, it follows that a.s.
	$$\left([-r,t]\ni s\mapsto X(s)-\hat{X}(s)\right)\in\mathcal{H}_t.$$
	Consequently, by rewriting
	$$V(t,X_t)-V(\hat{X}_t)=V(t,X_t)-V(t,X_t+\hat{X}_t-X_t),$$
	one can apply the assumption and ends up with the desired estimate. The global existence is given by Theorem \ref{Existence}. 
\end{proof}
\appendix
\section{Appendices}
\begin{Theorem}
	\label{PDE}
	Assume conditions \eqref{driftc} and \eqref{sigmac}. Then for any $T>0$ and $f\in L^q_p(T)$, there exists a unique solution $u\in H^q_{2,p}(T)$ of the following PDE
	\begin{align*}
	\partial_tu(t,x)+L_tu(t,x)+f(t,x)&=0,\\
	u(T,x)&=0
	\end{align*}
	where
	$$L_tu(t,x):=\frac{1}{2}\sum_{i,j,k=1}^{d}\sigma^{i,k}(t,x)\sigma^{j,k}(t,x)\partial_i\partial_ju(t,x)+b(t,x)\cdot\nabla u(t,x)$$
	with the bound
	$$\norm{u}_{H^q_{2,p}(S,T)}\leq C\norm{f}_{L^q_p(S,T)}$$
	for any $S\in[0,T]$ and some constant $C=C(T,\kappa,p,q,\norm{b}_{L^q_p(T)})>0$.
\end{Theorem}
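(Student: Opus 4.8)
The plan is to treat the first-order term $b\cdot\nabla u$ as a perturbation of the non-degenerate second-order part and to solve the resulting equation by a contraction mapping on short time intervals, then patch the local solutions together.

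First I would dispose of the case in which the drift in the principal part is absent, i.e. consider $\partial_t u+\tfrac12\sum_{i,j,k}\sigma^{i,k}\sigma^{j,k}\partial_i\partial_j u+f=0$ with zero terminal data. Under condition \eqref{sigmac} the matrix $a:=\sigma\sigma^\top$ is bounded, uniformly elliptic and uniformly continuous in $x$, so the classical parabolic $L^q$-$L^p$ theory (as used in \cite{Zhang2011}) yields, for every $f\in L^q_p(T)$, a unique $u\in H^q_{2,p}(T)$ with $u(T,\cdot)=0$ and an a priori bound $\norm{u}_{H^q_{2,p}(S,T)}\leq C_0\norm{f}_{L^q_p(S,T)}$ valid for every $S\in[0,T]$, with $C_0=C_0(T,\kappa,p,q,d)$. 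The point that makes the estimate localize in $S$ is that the restriction of $u$ to $[S,T]$ again solves the same equation on $[S,T]$ with zero terminal data and source $f|_{[S,T]}$.

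The core of the argument is the perturbation estimate: using Hölder's inequality in the mixed norm together with the parabolic embedding (Theorem \ref{embedding}), one shows that there is a function $\eta\colon\R_{\geq0}\to\R_{\geq0}$ with $\eta(h)\to0$ as $h\to0$ such that
$$\norm{b\cdot\nabla v}_{L^q_p(S,T)}\leq\eta(T-S)\,\norm{v}_{H^q_{2,p}(S,T)}$$
for all $v\in H^q_{2,p}(S,T)$ with $v(T,\cdot)=0$. This is precisely where the subcriticality $d/p+2/q<1$ enters: it guarantees that over a short time span $\nabla v$ picks up enough integrability to be paired against $b\in L^q_p$ with a small multiplicative constant. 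Now fix $\delta>0$ with $C_0\eta(\delta)\leq\tfrac12$. On an interval $[S,T]$ with $T-S\leq\delta$, define $\Phi$ on $\{v\in H^q_{2,p}(S,T):v(T,\cdot)=0\}$ by letting $\Phi(v)$ be the solution (constructed above) of $\partial_t w+\tfrac12\sum\sigma^{i,k}\sigma^{j,k}\partial_i\partial_j w+b\cdot\nabla v+f=0$, $w(T,\cdot)=0$. Combining the two estimates shows $\Phi$ is a $\tfrac12$-contraction, hence has a unique fixed point $u$, which solves the full PDE on $[S,T]$ and satisfies $\norm{u}_{H^q_{2,p}(S,T)}\leq 2C_0\norm{f}_{L^q_p(S,T)}$; applying the same absorption argument to the difference of two solutions gives uniqueness on such short intervals.

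Finally I would patch: solve on $[T-\delta,T]$ with terminal value $0$, then on $[T-2\delta,T-\delta]$ with terminal value the trace $u(T-\delta,\cdot)$ of the previously constructed piece (which is finite in the relevant trace space because $H^q_{2,p}$ embeds continuously into $C([a,b];B)$ for a suitable Besov space $B$), and iterate finitely many times to obtain the global solution on $[0,T]$ and global uniqueness. The global estimate $\norm{u}_{H^q_{2,p}(S,T)}\leq C\norm{f}_{L^q_p(S,T)}$ for arbitrary $S$ then follows by chaining the finitely many local estimates, each intermediate terminal term being reabsorbed via the embedding into the norm on the adjacent sub-interval. The main obstacle is the perturbation estimate with a constant that vanishes as the interval shrinks; once that is in hand, everything else is a routine assembly of classical parabolic regularity and the Banach fixed-point theorem.
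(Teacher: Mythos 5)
Your argument is correct and is essentially the paper's proof: the paper establishes Theorem \ref{PDE} simply by citing \cite{Zhang2011}, and the proof there (going back to Krylov--R\"ockner) is exactly your scheme --- classical $L^q$-$L^p$ solvability for the uniformly continuous, uniformly elliptic principal part, absorption of $b\cdot\nabla u$ on short time intervals using the subcriticality $\frac{d}{p}+\frac{2}{q}<1$ together with the vanishing terminal datum, a fixed-point/contraction step, and patching of finitely many intervals. The only point to carry out carefully, as you note, is the nonzero terminal data when patching (handled via the trace/Besov space for $H^q_{2,p}$, or equivalently by a time-cutoff or continuity-method argument as in the cited references).
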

\begin{proof}
	See \cite{Zhang2011}.
\end{proof}
\begin{Theorem}
	\label{embedding}
	Let $p,q\in(1,\infty)$, $T>0$ and $u\in H^q_{2,p}(T)$.
	\begin{enumerate}
		\item If $\frac{d}{p}+\frac{2}{q}<2$, then $u$ is a bounded H\"older continuous function on $[0,T]\times\R^d$ and for any $0<\varepsilon$, $\delta\leq1$ satisfying
		$$\varepsilon+\frac{d}{p}+\frac{2}{q}<2, \ \ 2\delta+\frac{d}{p}+\frac{2}{q}<2,$$
		there exists a constant $N=N(p,q,\varepsilon,\delta)$ such that
		\begin{align*}
			\abs{u(t,x)-u(s,x)}&\leq N\abs{t-s}^\delta\norm{u}^{1-\frac{1}{q}-\delta}_{\mathds{H}^q_{2,p}(T)}\norm{\partial_tu}^{\frac{1}{q}+\delta}_{L^q_p(T)},\\
			\abs{u(t,x)}+\frac{\abs{u(t,x)-u(t,y)}}{\abs{x-y}^\varepsilon}&\leq NT^{-\frac{1}{q}}\left(\norm{u}_{\mathds{H}^q_{2,p}(T)}+T\norm{\partial_tu}_{L^q_p(T)}\right)
		\end{align*}
		for all $s,t\in[0,T]$ and $x,y\in\R^d,x\neq y$.
		\item If $\frac{d}{p}+\frac{2}{q}<1$, then $\nabla u$ is a bounded H\"older continuous function on $[0,T]\times\R^d$ and for any $\varepsilon\in(0,1)$ satisfying
		$$\varepsilon+\frac{d}{p}+\frac{2}{q}<1,$$
		there exists a constant $N=N(p,q,\varepsilon)$ such that
		\begin{align*}
		\abs{\nabla u(t,x)-\nabla u(s,x)}&\leq N\abs{t-s}^\delta\norm{ u}^{1-\frac{1}{q}-\frac{\varepsilon}{2}}_{\mathds{H}^q_{2,p}(T)}\norm{\partial_tu}^{\frac{1}{q}+\frac{\varepsilon}{2}}_{L^q_p(T)},\\
		\abs{\nabla u(t,x)}+\frac{\abs{\nabla u(t,x)-\nabla u(t,y)}}{\abs{x-y}^\varepsilon}&\leq NT^{-\frac{1}{q}}\left(\norm{u}_{\mathds{H}^q_{2,p}(T)}+T\norm{\partial_tu}_{L^q_p(T)}\right)
		\end{align*}
		for all $s,t\in[0,T]$ and $x,y\in\R^d,x\neq y$.
	\end{enumerate}
\end{Theorem}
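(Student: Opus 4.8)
The estimates in Theorem~\ref{embedding} are the classical anisotropic (parabolic) Sobolev embeddings for the maximal‑regularity space $H^q_{2,p}(T)$ and appear, e.g., in \cite{Zhang2011} and the references therein; the plan is to derive them by chaining a trace theorem, the Besov--H\"older embedding, and a short interpolation argument. Write $H^q_{2,p}(T)=W^{1,q}\big((0,T);X_0\big)\cap L^q\big((0,T);X_1\big)$ with $X_0=L^p(\R^d)$ and $X_1=W^{2,p}(\R^d)$. The Lions--Peetre trace theorem embeds this space continuously into $C\big([0,T];(X_0,X_1)_{1-1/q,q}\big)=C\big([0,T];B^{2-2/q}_{p,q}(\R^d)\big)$; carried out on the fixed interval $(0,T)$ (rather than through an extension to $\R$) the embedding constant has the form
\[
\sup_{0\le t\le T}\norm{u(t,\cdot)}_{B^{2-2/q}_{p,q}}\ \le\ N\,T^{-1/q}\big(\norm{u}_{\mathds{H}^q_{2,p}(T)}+T\norm{\partial_t u}_{L^q_p(T)}\big),
\]
the factor $T^{-1/q}$ reflecting that an $L^q$‑in‑time bound over a short interval does not by itself control a pointwise‑in‑$t$ norm.

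For part~(1), note that $\frac dp+\frac2q<2$ forces $\frac dp<2$, so I would compose the trace embedding with the Besov--H\"older embedding $B^s_{p,q}(\R^d)\hookrightarrow C^\varepsilon_b(\R^d)$, valid for $0\le\varepsilon<s-\frac dp$; with $s=2-\frac2q$ this is precisely the hypothesis $\varepsilon+\frac dp+\frac2q<2$ and gives the stated spatial estimate (boundedness and $\varepsilon$‑H\"older continuity in $x$, uniformly in $t$). For the H\"older‑in‑time bound I would interpolate the two facts that $t\mapsto u(t,\cdot)$ is $(1-\tfrac1q)$‑H\"older as a map into $L^p$ — this is just $u\in W^{1,q}((0,T);L^p)$, with modulus $\norm{\partial_t u}_{L^q_p(T)}$ — and is bounded as a map into $B^{2-2/q}_{p,q}$: for $\lambda\in(0,1)$ the map is then $(1-\tfrac1q)(1-\lambda)$‑H\"older into $(L^p,B^{2-2/q}_{p,q})_{\lambda,q}=B^{\lambda(2-2/q)}_{p,q}$, and taking $\lambda$ just above $\tfrac{d/p}{2-2/q}$ so that $B^{\lambda(2-2/q)}_{p,q}\hookrightarrow L^\infty(\R^d)$ pushes the time‑H\"older exponent up to $\tfrac12(2-\tfrac2q-\tfrac dp)$; equivalently any $\delta$ with $2\delta+\tfrac dp+\tfrac2q<2$ is admissible. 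Tracking the interpolation constants and using Young's inequality to absorb the $\partial_t u$‑part of the Besov bound yields the homogeneous factor $\norm{u}_{\mathds{H}^q_{2,p}(T)}^{1-1/q-\delta}\norm{\partial_t u}_{L^q_p(T)}^{1/q+\delta}$.

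For part~(2), the stronger hypothesis $\frac dp+\frac2q<1$ forces $q>2$, hence $2-\tfrac2q>1$, so $\partial_{x_i}$ sends $B^{2-2/q}_{p,q}$ boundedly into $B^{1-2/q}_{p,q}$ and the trace embedding upgrades to $\nabla u\in C\big([0,T];B^{1-2/q}_{p,q}\big)$; then $B^{1-2/q}_{p,q}\hookrightarrow C^\varepsilon_b$ for $\varepsilon+\tfrac dp+\tfrac2q<1$ yields the spatial estimate for $\nabla u$, and rerunning the part~(1) interpolation — now asking the interpolation space to embed into $C^1_b(\R^d)$, i.e.\ $\lambda(2-\tfrac2q)>1+\tfrac dp$ — gives the H\"older‑in‑time estimate for $\nabla u$ with exponent up to $\tfrac12(1-\tfrac dp-\tfrac2q)$. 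I expect the only delicate point to be the bookkeeping of the $T$‑dependence: one must carry out the trace and interpolation inequalities on the given finite interval (e.g.\ by rescaling to unit length and covering) so as to expose the factor $T^{-1/q}$ instead of burying it in an extension operator, and check that the real‑interpolation constants between $L^p$ and $W^{2,p}$ — equivalently the Littlewood--Paley constants defining $B^s_{p,q}$ — depend only on $d,p,q$. The rest is a routine concatenation of classical embeddings.
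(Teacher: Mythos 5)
The paper itself does not prove this theorem: it is quoted from Fedrizzi's thesis, the ``proof'' being the citation \cite[p.~22, 23, 36]{Fedrizzi}. So your trace-plus-interpolation chain is a from-scratch argument rather than a variant of anything in the paper, and its skeleton is sound for the \emph{second} inequality of each part: the Lions--Peetre trace theorem gives $H^q_{2,p}(T)\hookrightarrow C\big([0,T];(L^p,W^{2,p})_{1-1/q,q}\big)=C\big([0,T];B^{2-2/q}_{p,q}\big)$, carrying it out on the fixed interval (pigeonhole a good time $t_0$ with $\norm{u(t_0,\cdot)}_{W^{2,p}}\leq T^{-1/q}\norm{u}_{\mathds{H}^q_{2,p}(T)}$ and integrate $\partial_t u$) produces exactly the factor $T^{-1/q}\big(\norm{u}_{\mathds{H}^q_{2,p}(T)}+T\norm{\partial_t u}_{L^q_p(T)}\big)$, and $B^{s}_{p,q}\hookrightarrow C^\varepsilon_b$ for $\varepsilon<s-\tfrac dp$ (with $s=2-\tfrac2q$, resp.\ $s=1-\tfrac2q$ after differentiating) gives the spatial bounds under precisely the stated conditions on $\varepsilon$.

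The genuine gap is the \emph{first} inequality of each part, i.e.\ the time-H\"older estimate in the sharp product form $N\abs{t-s}^{\delta}\norm{u}_{\mathds{H}^q_{2,p}(T)}^{1-\frac1q-\delta}\norm{\partial_t u}_{L^q_p(T)}^{\frac1q+\delta}$ with $N$ independent of $T$. Your interpolation yields $\abs{u(t,x)-u(s,x)}\lesssim\abs{t-s}^{(1-\frac1q)(1-\lambda)}\norm{\partial_t u}_{L^q_p(T)}^{1-\lambda}\big(\sup_{r}\norm{u(r,\cdot)}_{B^{2-2/q}_{p,q}}\big)^{\lambda}$, and the exponent pair $(1-\lambda,\lambda)$, tied to $\delta$ by $\delta=(1-\tfrac1q)(1-\lambda)$, does not match the stated pair $(1-\tfrac1q-\delta,\ \tfrac1q+\delta)$ (for $q=4$, $\delta=0.1$ one gets $1-\lambda\approx0.13$ versus $\tfrac1q+\delta=0.35$); moreover the sup-in-time Besov norm is only available through the additive, $T$-dependent trace bound. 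Young's inequality cannot close this, since it converts products into sums, not the additive bound into the required product. What is missing is an optimization or scaling step: either exploit that the stated inequality is invariant under time dilation and optimize over a free parameter, or argue as in Fedrizzi/Krylov by mollifying in space at scale $\rho$, estimating $u^{(\rho)}(t,x)-u^{(\rho)}(s,x)$ via $\partial_t u$ and $\abs{u-u^{(\rho)}}$ via second derivatives, working on the subinterval $[s,t]$ and choosing $\rho\sim\abs{t-s}^{1/2}$ (parabolic scaling); this also produces the full range $2\delta+\tfrac dp+\tfrac2q<2$ that you correctly identified, and the analogous statement for $\nabla u$ in part~2, where the displayed $\delta$ is to be read as $\varepsilon/2$. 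Since the paper only ever uses the qualitative consequences (uniform smallness of $\nabla\tilde u$ on short intervals and H\"older continuity), your sketch suffices in spirit, but as written it does not establish the stated bilinear estimates.
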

\begin{proof}
	See \cite[p. 22, 23, 36]{Fedrizzi}.
\end{proof}
In the next lemma we identify every $u\in H^q_{2,p}$ with its regular version.
\begin{Lemma}[It\=o formula for $H^q_{2,p}$-functions]
	\label{Ito}
	Let $T>0$, $p>1$ and $q>1$ satisfying \eqref{pq}. Let $X:\Omega\times[0,T]\to\R^d$ be a semimartingale on some filtrated probability space $\left(\Omega,\mathcal{F},\Prob,\left(\mathcal{F}_t\right)_{t\geq0}\right)$ of the form
	$$\dd{X}(t)=b(t)\dd{t}+\sigma(t)\dd{W}(t)$$
	where $(W_t)_{t\geq0}$ is a standard $d$-dimensional Brownian motion, $b:\Omega\times[0,T]\to\R^d$ and $\sigma:\Omega\times[0,T]\to\R^{d\times d}$ are progressively measurable with
	$$\Prob\left(\norm{b}_{L^1[0,T]}+\norm{a^{i,j}}_{L^\delta[0,T]}<\infty\right)=1, \ i,j=1,\dots,d$$
	for some $1<\delta\leq\infty$ where $a:=\sigma\sigma^\top$. Furthermore, assume that there exists a constant $C>0$ with
	$$\E\int_{0}^{T}f(t,X(t))\dd{t}\leq C\norm{f}_{L^{q/\delta*}_{p/\delta^*}(T)}$$
	for all $f\in L^{q/\delta^*}_{p/\delta^*}(T)$ where $\delta^*$ denotes the conjugate exponent of $\delta$.
	Then for any $u\in H^q_{2,p}(T)$, the It\=o formula holds, i.e.
	\begin{align*}
		u(t,X(t))-u(0,X(0))=&\int_{0}^{t}\partial_tu(s,X(s))\dd{s}+\int_{0}^{t}\nabla u(s,X(s))^\top b(s)\dd{s}\\
		&+\int_{0}^{t}\nabla u(s,X(s))^\top\sigma(s)\dd{W}(s)\\
		&+\frac{1}{2}\sum_{i,j=1}^{d}\int_{0}^{t}\partial_i\partial_j u(s,X(s))a^{i,j}(s)\dd{s}.
	\end{align*}
\end{Lemma}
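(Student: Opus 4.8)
The plan is to prove the formula first for smooth $u$ via the classical It\^o formula and then pass to the limit, splitting the terms on the right-hand side into two groups: the zeroth- and first-order terms, which I will control by the embedding Theorem~\ref{embedding}, and the $\partial_t$- and second-order terms, which I will control by the hypothesised Krylov-type estimate. First I would pick $u_n\in C^\infty([0,T]\times\R^d)$ with compact support in the space variable (hence with all derivatives bounded) such that $u_n\to u$ in $H^q_{2,p}(T)$; such functions are dense in $H^q_{2,p}(T)$ (truncate in $x$, extend in $t$, mollify). For each fixed $n$ the classical It\^o formula applies to the continuous semimartingale $X$ and the $C^{1,2}$-function $u_n$ and yields the asserted identity with $u$ replaced by $u_n$; it then remains to let $n\to\infty$.

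Since $\frac{d}{p}+\frac{2}{q}<1$, part (2) of Theorem~\ref{embedding}, applied to the differences $u_n-u_m$, shows that $(u_n)$ and $(\nabla u_n)$ are Cauchy in $L^\infty([0,T]\times\R^d)$, hence converge uniformly to the (continuous versions of the) functions $u$ and $\nabla u$. This settles the left-hand side, $u_n(t,X(t))\to u(t,X(t))$ uniformly in $t$. For $\int_0^t\nabla u_n(s,X(s))^\top b(s)\dd{s}$, uniform convergence of $\nabla u_n$ and $\norm{b}_{L^1[0,T]}<\infty$ a.s. give pathwise convergence by dominated convergence, uniformly in $t$. For $\int_0^t\nabla u_n(s,X(s))^\top\sigma(s)\dd{W}(s)$, note that $\int_0^T\abs{(\nabla u_n-\nabla u)(s,X(s))^\top\sigma(s)}^2\dd{s}\le\norm{\nabla u_n-\nabla u}_\infty^2\int_0^T\operatorname{tr}a(s)\dd{s}\to0$ a.s. (using $a^{i,j}\in L^\delta[0,T]\subset L^1[0,T]$); hence these stochastic integrals converge uniformly on $[0,T]$ in probability to the corresponding integral against $u$, so a.s. along a subsequence.

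The delicate terms are $\int_0^t\partial_tu_n(s,X(s))\dd{s}$ and $\int_0^t\partial_i\partial_ju_n(s,X(s))a^{i,j}(s)\dd{s}$, since $\partial_tu_n,\partial_i\partial_ju_n$ converge to $\partial_tu,\partial_i\partial_ju$ only in $L^q_p(T)$ and not uniformly --- this is exactly where the identification of $u$ with its regular version and the occupation-density estimate are genuinely used. Writing $g_n$ for $\partial_tu_n-\partial_tu$ or $\partial_i\partial_ju_n-\partial_i\partial_ju$, so $\norm{g_n}_{L^q_p(T)}\to0$, I would estimate pathwise by H\"older's inequality in time against the multiplier $m$ (with $m=a^{i,j}$, resp.\ $m\equiv1$, and $1/\delta+1/\delta^*=1$):
\[
\sup_{t\le T}\abs{\int_0^tg_n(s,X(s))m(s)\dd{s}}\le\norm{m}_{L^\delta[0,T]}\left(\int_0^T\abs{g_n(s,X(s))}^{\delta^*}\dd{s}\right)^{1/\delta^*}.
\]
Since $\delta^*\ge1$ and $\norm{\abs{g_n}^{\delta^*}}_{L^{q/\delta^*}_{p/\delta^*}(T)}=\norm{g_n}_{L^q_p(T)}^{\delta^*}$, the hypothesised estimate gives $\E\int_0^T\abs{g_n(s,X(s))}^{\delta^*}\dd{s}\le C\norm{g_n}_{L^q_p(T)}^{\delta^*}\to0$; together with Jensen's inequality (concavity of $x\mapsto x^{1/\delta^*}$) and a localization in the a.s.-finite random norm $\norm{a^{i,j}}_{L^\delta[0,T]}$, these two terms converge in probability, uniformly in $t$, to the corresponding terms with $u$. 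Passing to a common subsequence along which every term converges a.s. and uniformly in $t$ turns the It\^o formula for $u_n$ into the It\^o formula for $u$.

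The main obstacle is precisely this last group: $\partial_i\partial_ju\in L^q_p$ only, so $\partial_i\partial_ju(s,X(s))$ is a priori undefined pointwise and the convergence is merely in norm. The Krylov-type hypothesis is what makes $s\mapsto\partial_i\partial_ju(s,X(s))$ meaningful (Lebesgue-null sets are null for the occupation measure of $X$) and what controls the error; the one technical point is to match the integrability exponents, routing the $L^q_p$-convergence of $g_n$ through the weaker $L^{q/\delta^*}_{p/\delta^*}$ estimate by raising to the power $\delta^*$ after a H\"older split in time against $a^{i,j}$.
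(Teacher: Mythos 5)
Your proposal is correct and follows essentially the same route as the paper: approximate $u$ by smooth compactly supported $u_n$ in $H^q_{2,p}(T)$, apply the classical It\^o formula, control the $u$- and $\nabla u$-terms via the uniform convergence from Theorem~\ref{embedding}, and handle the $\partial_t u$- and $\partial_i\partial_j u$-terms by H\"older in time against $a^{i,j}\in L^\delta[0,T]$ combined with the hypothesised Krylov estimate applied to $\abs{g_n}^{\delta^*}$. The only (harmless) differences are cosmetic: the paper localizes first so that each error term converges in $L^1(\Prob)$ for fixed $t$, while you argue pathwise/in probability and extract a subsequence, and the sup-norm bound on $u_n-u$ itself comes from part (1) rather than part (2) of Theorem~\ref{embedding}.
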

\begin{proof}
	One can assume without loss of generality
	$$\norm{b}_{L^1(\Omega\times[0,1])}+\norm{a^{i,j}}_{L^\delta(\Omega\times[0,T])}<\infty, \ i,j=1,\dots,d$$
	by using the standard localization argument via stopping times. Next, choose a sequence $(u_n)_{n\in\N}\subset C_c^\infty\left(\R^{d+1}\right)$ such that
	$$\lim\limits_{n\to\infty}\norm{u-u_n}_{H^q_{2,p}(T)}=0.$$
	By the embedding Theorem \ref{embedding}, it holds
	$$\lim\limits_{n\to\infty}\left(\norm{u_n-u}_{L^\infty(\R^{d+1})}+\norm{\nabla u-\nabla u_n}_{L^\infty(\R^{d+1},\R^d)}\right)=0.$$
	The It\=o formula gives for each $n\in\N$ and $t\in[0,T]$
	\begin{align*}
		u_n(t,X(t))-u_n(0,X(0))=&\int_{0}^{t}\partial_tu_n(s,X(s))\dd{s}+\int_{0}^{t}\nabla u_n(s,X(s))^\top b(s)\dd{s}\\
		&+\int_{0}^{t}\nabla u_n(s,X(s))^\top\sigma(s)\dd{W}(s)\\
		&+\frac{1}{2}\sum_{i,j=1}^{d}\int_{0}^{t}\partial_i\partial_j u_n(s,X(s))a^{i,j}(s)\dd{s}.
	\end{align*}
	The left-hand side converges to $u(t,X(t))-u(0,X(0))$ by the choice of $u_n$. Furthermore, for $\delta<\infty$, one has the following four estimates
	\begin{align*}
		&\E\int_{0}^{t}\abs{\partial_tu(s,X(s))-\partial_tu_n(s,X(s))}\dd{s}\\
		\leq&T^{\frac{1}{\delta}}\left(\E\int_{0}^{T}\abs{\partial_tu(t,X(t))-\partial_tu_n(t,X(t))}^{\delta^*}\dd{t}\right)^{\frac{1}{\delta^*}}\\
		\leq&CT^{\frac{1}{\delta}}\norm{\abs{\partial_tu-\partial_tu_n}^{\delta^*}}_{L^{q/\delta^*}_{p/\delta^*}(T)}^\frac{1}{\delta^*}\\
		=&CT^{\frac{1}{\delta}}\norm{\partial_tu-\partial_tu_n}_{L^q_p(T)},		
	\end{align*}
	$$\E\int_{0}^{t}\abs{\nabla u(s)-\nabla u_n(s)}\abs{b(s)}\dd{s}\leq\norm{\nabla u-\nabla u_n}_{L^\infty(\R^{d+1},\R^d)}\norm{b}_{L^1(\Omega\times[0,T])},$$
	\begin{align*}
		&\E\abs{\int_{0}^{t}(\nabla u(s,X(s))-\nabla u_n(s,X(s)))^\top\sigma(s)\dd{W}(s)}^2\\
		\leq&\E\int_{0}^{T}\abs{\left(\nabla u(t,X(t))-\nabla u_n(t,X(t))\right)\sigma(t)}^2\dd{t}\\
		\leq&\norm{\nabla u-\nabla u_n}_{L^\infty(\R^{d+1},\R^d)}^2\cdot\E\int_{0}^{T}\norm{\sigma(t)}_{HS}^2\dd{t},
	\end{align*}
	and
	\begin{align*}
		&\E\sum_{i,j=1}^{d}\int_{0}^{t}\abs{\partial_i\partial_ju(s,X(s))-\partial_i\partial_ju_n(s,X(s))}\abs{a^{i,j}(s)}\dd{s}\\
		\leq&\sum_{i,j=1}^{d}\left(\E\int_{0}^{T}\abs{\partial_i\partial_ju(t,X(t))-\partial_i\partial_ju_n(t,X(t))}^{\delta^*}\dd{t}\right)^{\frac{1}{\delta^*}}\norm{a^{i,j}}_{L^\delta(\Omega\times[0,T])}\\
		\leq&\sum_{i,j=1}^{d}C\norm{\abs{\partial_i\partial_ju-\partial_i\partial_ju_n}^{\delta^*}}_{L^{q/\delta^*}_{p/\delta^*}(T)}^{\frac{1}{\delta^*}}\norm{a^{i,j}}_{L^\delta(\Omega\times[0,T])}\\
		=&\sum_{i,j=1}^{d}C\norm{\partial_i\partial_ju-\partial_i\partial_ju_n}_{L^q_p(T)}\norm{a^{i,j}}_{L^\delta(\Omega\times[0,T])}.
	\end{align*}
	For $\delta=\infty$, the estimates are basically the same. All these terms above converge to zero by the choice of $u_n$, which provides the desired convergence of the right-hand side.
\end{proof}
Let $\phi$ be a locally integrable function on $\R^d$. The Hardy-Littlewood maximal function is definded by
$$\mathcal{M}\phi(x):=\sup_{0<r<\infty}\frac{1}{\abs{B_r}}\int_{B_r}\phi(x+y)\dd{y}$$
where $B_r$ is the Euclidean ball of radius $r$. The following result is cited from \cite{Zhang2011}.
\begin{Lemma} \ 
	\label{Hardy-Littlewood}
	\begin{enumerate}
		\item	There exists a constant $C_d>0$ such that for all $\phi\in C^{\infty}\left(\R^d\right)$ and $x,y\in\R^d$,
		$$\abs{\phi(x)-\phi(y)}\leq C_d\abs{x-y}\left(\mathcal{M}\abs{\nabla\phi}(x)+\mathcal{M}\abs{\nabla\phi}(y)\right).$$
		\item For any $p>1$, there exists a constant $C_{d,p}$ such that for all $\phi\in L^p\left(\R^d\right)$,
		$$\norm{\mathcal{M}\phi}_{L^p}\leq C_{d,p}\norm{\phi}_{L^p.}$$
	\end{enumerate}
\end{Lemma}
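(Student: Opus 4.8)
Statement~(2) is the classical Hardy--Littlewood--Wiener maximal inequality, which I would establish in the standard fashion: the Vitali covering lemma yields the weak type $(1,1)$ bound $\abs{\{x\in\R^d:\mathcal{M}\phi(x)>\lambda\}}\leq C_d\lambda^{-1}\norm{\phi}_{L^1}$ for every $\lambda>0$, the estimate $\norm{\mathcal{M}\phi}_{L^\infty}\leq\norm{\phi}_{L^\infty}$ is immediate from the definition, and the Marcinkiewicz interpolation theorem applied to these two endpoints gives $\norm{\mathcal{M}\phi}_{L^p}\leq C_{d,p}\norm{\phi}_{L^p}$ for all $p\in(1,\infty)$. (Equivalently, one may simply cite Stein's monograph on singular integrals.)

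For statement~(1) the plan is to pass through averages over a ball containing both $x$ and $y$. The first ingredient is the local Poincar\'e-type bound
$$\abs{\phi(x)-\frac{1}{\abs{B(x,R)}}\int_{B(x,R)}\phi(z)\dd{z}}\leq C_d\int_{B(x,R)}\frac{\abs{\nabla\phi(z)}}{\abs{z-x}^{d-1}}\dd{z},$$
which I would obtain by writing $\phi(x)-\phi(z)=-\int_0^{\abs{z-x}}\nabla\phi(x+s\omega)\cdot\omega\,\dd{s}$ with $\omega=(z-x)/\abs{z-x}$, integrating this in $z$ over $B(x,R)$ in polar coordinates, and exchanging the order of the radial and the $s$-integration. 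The right-hand side is a truncated Riesz potential of $\abs{\nabla\phi}$; splitting $B(x,R)$ into the disjoint dyadic annuli $B(x,2^{-k}R)\setminus B(x,2^{-k-1}R)$, $k\geq0$, on which $\abs{z-x}^{1-d}$ is comparable to $(2^{-k}R)^{1-d}$ while $\int_{B(x,2^{-k}R)}\abs{\nabla\phi}\leq c_d(2^{-k}R)^d\,\mathcal{M}\abs{\nabla\phi}(x)$, the $k$-th contribution is of order $2^{-k}R\,\mathcal{M}\abs{\nabla\phi}(x)$ and the geometric series sums to give
$$\int_{B(x,R)}\frac{\abs{\nabla\phi(z)}}{\abs{z-x}^{d-1}}\dd{z}\leq C_d\,R\,\mathcal{M}\abs{\nabla\phi}(x).$$

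To finish, I would apply the last two displays with $B:=B\!\left(\tfrac{x+y}{2},\abs{x-y}\right)$, which contains both $x$ and $y$ and is contained in $B(x,\tfrac32\abs{x-y})$ and in $B(y,\tfrac32\abs{x-y})$. Since these three balls have comparable volume, replacing the average over $B$ by the average over $B(x,\tfrac32\abs{x-y})$, respectively $B(y,\tfrac32\abs{x-y})$, costs only a dimensional constant, and hence
$$\abs{\phi(x)-\phi(y)}\leq\abs{\phi(x)-\tfrac{1}{\abs{B}}\int_B\phi}+\abs{\phi(y)-\tfrac{1}{\abs{B}}\int_B\phi}\leq C_d\,\abs{x-y}\left(\mathcal{M}\abs{\nabla\phi}(x)+\mathcal{M}\abs{\nabla\phi}(y)\right).$$

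Both parts are classical, so I do not anticipate a genuine obstacle. The only point needing a little care in~(1) is the passage from the two one-sided estimates centred at $x$ and at $y$ to a single inequality involving the common ball $B$, which the inclusions $B\subset B(x,\tfrac32\abs{x-y})\cap B(y,\tfrac32\abs{x-y})$ together with the volume comparison resolve; in~(2) it is the measure-theoretic bookkeeping in the Vitali selection and the verification of the interpolation hypotheses.
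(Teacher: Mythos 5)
Your proposal is correct. Note that the paper itself offers no proof of this lemma: it is stated as a citation from Zhang's 2011 paper (where it is in turn a standard fact, part (1) being the classical pointwise Lipschitz-type estimate via the maximal function and part (2) the Hardy--Littlewood--Wiener theorem). Your sketch supplies the standard self-contained argument and it goes through: the radial integration and exchange of the $\rho$- and $s$-integrals gives the averaged bound $\frac{1}{\abs{B(x,R)}}\int_{B(x,R)}\abs{\phi(z)-\phi(x)}\dd{z}\leq C_d\int_{B(x,R)}\abs{\nabla\phi(z)}\,\abs{z-x}^{1-d}\dd{z}$ (with the absolute value inside the average, which is exactly what you need), the dyadic-annuli summation bounds the truncated Riesz potential by $C_dR\,\mathcal{M}\abs{\nabla\phi}(x)$, and the passage from the common ball $B\left(\tfrac{x+y}{2},\abs{x-y}\right)$ to the balls centred at $x$ and $y$ is legitimate precisely because you compare averages of $\abs{\phi-\phi(x)}$ over nested balls of comparable volume. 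For part (2), Vitali plus the trivial $L^\infty$ bound plus Marcinkiewicz interpolation is the canonical route. The only cosmetic remark is that since the lemma is quoted from the literature, a citation would have sufficed here as well; your write-up is more than is demanded, but every step is sound.
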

\begin{Lemma}
	\label{Gronwall}
	Let $Z$ be an adapted non-negative stochastic process with continuous paths defined on $[0,\infty)$ which satisfies the inequality
	$$Z(t)\leq K\int_{0}^{t}\sup\limits_{0\leq r\leq s}Z(r)\dd{s}+M(t)+C,$$
	where $C\geq0$, $K\geq0$ and $M$ is a continuous local martingale with $M(0)=0$. Then for each $0<p<1$, there exist universal finite constants $c_1(p)$, $c_2(p)$ (not depending on $K$, $C$, $T$ and $M$) such that
	$$\E\left[\sup\limits_{0\leq t\leq T}Z(t)^p\right]\leq C^p c_2(p)e^{c_1(p)KT}\text{ for every }T\geq0.$$
\end{Lemma}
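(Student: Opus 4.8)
The plan is to read the hypothesis as a Lenglart-domination statement for the running supremum and to close it by a discrete Gronwall argument after cutting $[0,T]$ into short pieces. Write $\bar Z(t):=\sup_{0\le s\le t}Z(s)$. First I would reduce to the case where $M$ is a genuine martingale: with $\tau_n:=\inf\{t\ge0:\abs{M(t)}+Z(t)\ge n\}$ the stopped process $M^{\tau_n}$ is bounded, hence a true martingale, $Z(\cdot\wedge\tau_n)\le n$, and since $\sup_{r\le s}Z(r\wedge\tau_n)=\bar Z(s)$ for $s\le\tau_n$, the process $t\mapsto Z(t\wedge\tau_n)$ satisfies the same inequality with the same $C,K$ and the true martingale $M^{\tau_n}$ in place of $M$. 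If the asserted bound holds for each $n$ with a constant independent of $n$, Fatou's lemma (letting $n\to\infty$, so eventually $\tau_n>T$) transfers it to $Z$. So from now on $M$ is a martingale, $\E[M(\tau)]=0$ for all bounded stopping times $\tau$, and $\E[\bar Z(t)^p]<\infty$.

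For any bounded stopping time $\tau$ the hypothesis gives $Z(\tau)\le A(\tau)+M(\tau)$ with the nonnegative, continuous, nondecreasing (hence predictable) process
$$A(t):=C+K\int_0^t\bar Z(s)\,\dd{s}.$$
Taking expectations, using $\E[M(\tau)]=0$ and $Z\ge0$, gives $\E[Z(\tau)]\le\E[A(\tau)]$, i.e. $Z$ is Lenglart-dominated by $A$. Lenglart's domination inequality then yields, for every $0<p<1$ and $t_0>0$,
$$\E\Big[\sup_{0\le t\le t_0}Z(t)^p\Big]\le\frac{2-p}{1-p}\,\E\big[A(t_0)^p\big].$$

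To exploit this I would subdivide. Fix $\delta>0$; since $0<p<1$ makes $x\mapsto x^p$ subadditive and $\bar Z$ is nondecreasing, for $t_0=m\delta$,
$$A(m\delta)^p\le C^p+K^p\Big(\int_0^{m\delta}\bar Z(s)\,\dd{s}\Big)^p\le C^p+(K\delta)^p\sum_{j=1}^m\bar Z(j\delta)^p.$$
Put $b_m:=\E[\bar Z(m\delta)^p]$ (finite by the reduction) and $\alpha:=\tfrac{2-p}{1-p}$; the two displays give $b_m\le\alpha C^p+\alpha(K\delta)^p\sum_{j=1}^m b_j$. Choosing $\delta:=\tfrac1K(2\alpha)^{-1/p}$ so that $\alpha(K\delta)^p=\tfrac12$, this becomes $b_m\le2\alpha C^p+\sum_{j=1}^{m-1}b_j$, whence $b_m\le\alpha C^p\,2^m$ by induction. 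Taking $m=N:=\lceil T/\delta\rceil$ and using $\bar Z(T)\le\bar Z(N\delta)$,
$$\E\Big[\sup_{0\le t\le T}Z(t)^p\Big]\le b_N\le\alpha C^p\,2^N\le c_2(p)\,e^{c_1(p)KT}C^p,$$
with the universal constants $c_2(p):=\tfrac{2(2-p)}{1-p}$ and $c_1(p):=(\ln2)\,(2\alpha)^{1/p}$, which is the claim.

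The crux is that one cannot run a Gronwall argument on first moments: $\E[A(t_0)]$ is a first moment of a running supremum, which is not controlled by $C$ alone, so the whole scheme only works at the level of $p$-th moments with $p<1$, where the weak-type (Lenglart) maximal inequality is available. The other point requiring care is the localisation: it is essential in order to turn the local martingale $M$ into a true martingale with vanishing optional-stopping expectation (so that $Z$ is genuinely dominated by $A$), and one must check that the constant produced in the final step depends only on $p,K,T$ — which it does — so that the passage to the limit is legitimate.
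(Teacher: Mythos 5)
Your proof is correct. Note that the paper does not prove Lemma \ref{Gronwall} at all: its ``proof'' is the citation of \cite{Renesse,Scheutzow}, so what you give is a self-contained alternative (modulo the classical Lenglart domination inequality), and it is a genuinely different route from the cited one: the argument in \cite{Scheutzow} rests on a pathwise Gronwall step plus an $L^p$-comparison, $0<p<1$, between the running supremum of a continuous local martingale and its running infimum (the latter being controlled by $C+K\int_0^t\sup_{r\le s}Z(r)\dd{s}$), which avoids both Lenglart's inequality and your interval-splitting iteration and yields constants with better behaviour as $p\downarrow0$ than your $c_1(p)=(\ln 2)\bigl(2(2-p)/(1-p)\bigr)^{1/p}$; since the lemma only asserts existence of finite constants depending on $p$, this loss is immaterial here. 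Your scheme — localisation via $\tau_n$ so that $M^{\tau_n}$ is a bounded martingale and the stopped process inherits the hypothesis, Lenglart domination of $Z$ by the continuous adapted (hence predictable) increasing process $A(t)=C+K\int_0^t\sup_{r\le s}Z(r)\dd{s}$, then a discrete Gronwall on the $p$-th moments over windows of length $\delta\sim1/K$ using subadditivity of $x\mapsto x^p$ — goes through at every step I checked (inherited inequality for the stopped process, finiteness of the $b_m$ allowing the absorption of the $j=m$ term, the induction $b_m\le\alpha C^p2^m$, the Fatou/monotone passage $n\to\infty$ with an $n$-independent constant). Two small points you should make explicit: the constant $(2-p)/(1-p)$ in Lenglart's inequality requires the dominating process to be predictable, which your $A$ is, and the deterministic offset $A(0)=C$ is harmless for that inequality; and the choice $\delta=K^{-1}(2\alpha)^{-1/p}$ presupposes $K>0$ — for $K=0$ the claim follows directly from the Lenglart step alone (or by letting $K\downarrow0$ in your final bound).
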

\begin{proof}
	See \cite{Renesse,Scheutzow}.
\end{proof}
\begin{Lemma}[Modified Khas'minskii lemma]
	\label{Khas}
	Let $\beta:\Omega\times[0,T]\to\R_{\geq0}$ be a non-negative, measurable, adapted process with respect to some filtrated probability space $\left(\Omega,\mathcal{F},\Prob,\left(\mathcal{F}_t\right)_{0\leq t\leq T}\right)$ and $T>0$. Assume there exists some $0\leq\alpha<1$ for all $0\leq s\leq t\leq T$ such that
	$$\E\left[\int_{s}^{t}\beta(r)\dd{r}\bigg|\mathcal{F}_s\right]\leq\alpha.$$
	Then one has
	$$\E\exp\left(\int_{0}^{T}\beta(r)\dd{r}\right)\leq\frac{1}{1-\alpha}.$$
\end{Lemma}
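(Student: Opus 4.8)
The plan is to write $\exp\big(\int_0^T\beta\big)$ as its power series and to bound the moments of $\int_s^T\beta(r)\,\dd{r}$ uniformly in $s$. Since $\beta\ge0$, every quantity appearing is non-negative, so monotone convergence (and Tonelli for the iterated integrals) will let me interchange expectations with the infinite sum and with Lebesgue integrals without further ado; the proof then reduces to the moment estimate
$$\E\!\left[\Big(\int_s^T\beta(r)\,\dd{r}\Big)^{\!n}\,\Big|\,\mathcal{F}_s\right]\le n!\,\alpha^n,\qquad 0\le s\le T,\ n\in\N,$$
which I would establish by induction on $n$.

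The cases $n=0,1$ are immediate, the case $n=1$ being exactly the hypothesis. For the step from $n-1$ to $n$, set $F(r):=\int_r^T\beta(u)\,\dd{u}$, which is absolutely continuous with $F'=-\beta$ a.e., so pathwise
$$\Big(\int_s^T\beta\Big)^n=F(s)^n=n\int_s^T F(r)^{n-1}\beta(r)\,\dd{r}.$$
Applying $\E[\,\cdot\mid\mathcal{F}_s]$, pulling it inside the $\dd{r}$-integral by Tonelli, and then inserting $\E[\,\cdot\mid\mathcal{F}_r]$ via the tower property — legitimate because $\beta(r)$ is $\mathcal{F}_r$-measurable and hence factors out of the inner conditional expectation — yields
$$\E\!\left[F(s)^n\mid\mathcal{F}_s\right]=n\int_s^T\E\!\left[\beta(r)\,\E\big[F(r)^{n-1}\mid\mathcal{F}_r\big]\,\Big|\,\mathcal{F}_s\right]\dd{r}.$$
By the induction hypothesis $\E[F(r)^{n-1}\mid\mathcal{F}_r]\le(n-1)!\,\alpha^{n-1}$, a constant, so it comes out and leaves $n!\,\alpha^{n-1}\,\E[\int_s^T\beta\mid\mathcal{F}_s]\le n!\,\alpha^{n-1}\cdot\alpha=n!\,\alpha^n$ by one more use of the hypothesis.

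Finally, taking $s=0$, then expectations, and summing the series,
$$\E\exp\!\Big(\int_0^T\beta(r)\,\dd{r}\Big)=\sum_{n=0}^\infty\frac1{n!}\,\E\!\left[\Big(\int_0^T\beta\Big)^{\!n}\right]\le\sum_{n=0}^\infty\alpha^n=\frac1{1-\alpha},$$
the interchange being justified by monotone convergence. I expect the only delicate point to be the measurability bookkeeping in the induction step — checking that Tonelli genuinely applies to the iterated integral/conditional expectations and that adaptedness of $\beta$ lets $\beta(r)$ factor out of $\E[\,\cdot\mid\mathcal{F}_r]$ — but given that $\beta$ is non-negative, jointly measurable and adapted, this is routine rather than a real obstacle.
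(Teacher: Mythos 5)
Your proof is correct and follows essentially the same route as the paper: expand the exponential in a power series, gain a factor $\alpha$ for each integration by conditioning at the intermediate time, and sum the geometric series. The only difference is presentational — you package the paper's iterated integral over the simplex $\{0\le r_1\le\cdots\le r_n\le T\}$ as an induction on conditional moments via the pathwise identity $F(s)^n=n\int_s^T F(r)^{n-1}\beta(r)\,\dd{r}$, which amounts to the same computation.
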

\begin{proof} This proof mainly follows the technique used in \cite{Fedrizzi}. 
	\begin{align*}
	& \  \  \ \E\exp\left(\int_{0}^{T}\beta(r)\dd{r}\right)\\
	&=\sum_{n=0}^{\infty}\frac{1}{n!}\E\int_{0}^{T}\cdots\int_{0}^{T}\beta(r_1)\cdots\beta(r_n)\dd{r_1}\cdots\dd{r_n}\\
	&=\sum_{n=0}^{\infty}\E\int_{0\leq r_1\leq r_2\leq\cdots\leq r_n\leq T}\beta(r_1)\cdots\beta(r_n)\dd{r}\\
	&=\sum_{n=0}^{\infty}\E\left[\int_{0\leq r_1\leq r_2\leq\cdots\leq r_{n-1}\leq T}\beta(r_1)\cdots\beta(r_{n-1})\int_{r_{n-1}}^{T}\beta(r_n)\dd{r_n}\dd{r_1}\cdots\dd{r_{n-1}}\right]\\
	&=\sum_{n=0}^{\infty}\E\left[\int_{0}^{T}\cdots\int_{0}^{r_2}\beta(r_1)\cdots\beta(r_{n-1})\E\left(\int_{r_{n-1}}^{T}\beta(r_n)\dd{r_n}\bigg|\mathcal{F}_{r_{n-1}}\right)\dd{r_1}\cdots\dd{r_{n-1}}\right]\\
	&\leq\sum_{n=0}^{\infty}\alpha\E\int_{0}^{T}\cdots\int_{0}^{r_2}\beta(r_1)\cdots\beta(r_{n-1})\dd{r_1}\cdots\dd{r_{n-1}}\\
	&\overset{(\star)}{\leq}\sum_{n=0}^{\infty}\alpha^n\\
	&=\frac{1}{1-\alpha}
	\end{align*}
	where $(\star)$ was obtained by iteration.
\end{proof}

\end{document}